\journal{Journal of Differential Equations}
\newtheorem{theorem}{Theorem}[section]
\newtheorem{proposition}{Proposition}[section]
\newtheorem{lemma}{Lemma}[section]
\newtheorem{remark}{Remark}[section]
\newproof{proof}{Proof}
\numberwithin{equation}{section}
\definecolor{red}{rgb}{1,0,0} 
\definecolor{blue}{rgb}{0,0,1} 
\begin{document}
	
	\begin{frontmatter}
		
		\title{Energy estimates and hypocoercivity analysis for a multi-phase  Navier-Stokes-Vlasov-Fokker-Planck system with uncertainty}

		\author[mymainaddress,mysecondaryaddress,mythirdaddress]{Shi Jin\corref{mycorrespondingauthor}}
		\cortext[mycorrespondingauthor]{Corresponding author}
		\ead{shijin-m@sjtu.edu.cn}
		
		\address[mymainaddress]{School of Mathematical Sciences, Shanghai Jiao Tong University, Shanghai200240, China}
		\address[mysecondaryaddress]{Institute of Natural Sciences, Shanghai Jiao Tong University, Shanghai200240, China}
		\address[mythirdaddress]{Ministry of Education, Key Laboratory in Scientific and Engineering Computing, Shanghai Jiao Tong University, Shanghai200240, China}
		
		\author[mymainaddress]{Yiwen Lin}
		\ead{linyiwen@sjtu.edu.cn}

		\begin{abstract}
			This paper is concerned with a kineitc-fluid model with random initial inputs in the fine particle regime, which is a system coupling the incompressible Navier–Stokes equations and the Vlasov–Fokker–Planck equations that model dispersed particles of different sizes.
			A uniform regularity for random initial data near the global equilibrium is established in some suitable Sobolev spaces by using energy estimates,  and  we also prove the energy decays exponentially in time by hypocoercivity arguments, which means that the long time behavior of the solution is insensitive to the random perturbation  in the initial data.
			For the generalized polynomial chaos stochastic Galerkin method (gPC-sG) for the  model, with initial data near the global equilibrium and smooth enough in the physical and random spaces, we prove that the gPC-sG method has spectral accuracy, uniformly in time and the Knudsen number, and the error decays exponentially in time.
		\end{abstract}
		
		\begin{keyword}
			kinetic-fluid model  \sep hypocoercivity \sep uncertainty quantification \sep stochastic Galerkin method
		\end{keyword}
		
	\end{frontmatter}
	
	
	\section{Introduction}
	
	The study of  kinetic-fluid models for a mixture of flows, in which the particles represent the dispersed phase evolving in a dense fluid, is motivated by applications such as the dispersion of smoke or dust \cite{Friedlander1977}, biomedical modeling of spray \cite{Baranger2005} and coupled models in combustion theory \cite{Williams1985}. 
	See \cite{Prosperetti2007} for more details on the modeling of such multi-phase flows.
	
	Mathematically, these problems can be described by
	partial differential equations, where the evolution of the particle distribution function is driven by a combination of particle transport, Stokes drag force exerted by the surrounding fluid on the particle and Brownian motion of particles.
	This leads to the Navier-Stokes-Vlasov-Fokker-Planck system, first proposed in \cite{GoudonJabin2004a, GoudonJabin2004b}. 
	In this paper, we are interested in models that describe a large number of particles, {\it with distinct but fixed sizes}, interacting with fluids.
	We will ignore the  influence of the external potential (gravity, electrostatic force, centrifugal force, etc.), coagulation and fragmentation that occurs between particles that induce change of  particle sizes.
	As usual in fluid dynamic, the dense fluid phase is a liquid or dense gas described by macroscopic quantities (such as mass density, velocity, and temperature) and is therefore modeled by the Euler or Navier-Stokes equations, depending on time and space variables.
	Particles (e.g. droplets, bubbles) dispersed in the fluid are described by distribution functions in phase space and are modeled by kinetic equations, depending on time, space  and (microscopic) particle velocity.
	Thus, the unknowns for different phases may not  depend on the same set of variables, and particles and fluid systems are  coupled through nonlinear forcing terms. Such a coupling and nonlinearities pose new  difficulties in mathematical analysis as well numerical computations than uncoupled problems.  
	
	There are two physically important regimes in these problems: one is the light particle regime \cite{GoudonJabin2004a}, 
	and the other is the fine particle regime \cite{GoudonJabin2004b}.
	For the light particle regime, 
	the velocity of fluid is small compared to the typical molecular velocity of particles and
	the particles are light and have little influence on the fluid.
	For the fine particle regime, 
	particle sizes are small compared to typical length scales and
	the density of the fluid and the particles are the same.
	Both  regimes have  much smaller relaxation time compared to the typical time scale.
	In this paper, we focus on the fine particle regime.
	
	The study of existence, uniqueness, and regularity problems depends on the nature of the coupling and the complexity of the equations used to describe the fluid. 
	For example, in \cite{Hamdache1998}, Hamdache established global existence and large-time behavior of solutions for the Vlasov–Stokes system. Boudin, Desvillettes, Grandmont, and Moussa \cite{BoudinDesvillettes2009} proved the global existence of weak solutions to the incompressible Vlasov–Navier–Stokes system on a periodic domain. Later, this result was extended to a bounded domain by Yu \cite{Yu2013}. Goudon, He, Moussa, and Zhang \cite{GoudonHe2010} established the global-in-time existence of classical solutions near the equilibrium for the incompressible Navier–Stokes–Vlasov–Fokker–Planck system and investigated regularity properties of the solutions as well as their long time behavior; 
	meanwhile Carrillo, Duan, and Moussa \cite{CarrilloDuan2011} studied the corresponding inviscid case. Chae, Kang, and Lee \cite{ChaeKang2011} obtained the global existence of weak and classical solutions for the Navier–Stokes–Vlasov–Fokker–Planck equations in a torus. Benjelloun, Desvillettes, and Moussa \cite{BenjellounDesvillettes2014} obtained the existence of global weak solutions to the incompressible Vlasov–Navier–Stokes system with a fragmentation kernel. Goudon, Jabin, and Vasseur \cite{GoudonJabin2004a,GoudonJabin2004b} investigated the hydrodynamic limits to the incompressible Vlasov–Navier–Stokes system under suitable scalings.
	Recently, Cao and Jiang \cite{CaoJiang2021} obtained a global bounded weak entropy solution for such one-dimensional Euler-Vlasov equations with arbitrarily large initial data.
	Numerical methods for  kinetic-fluid coupled systems that possess the asymptotic-preserving properties \cite{Jin1999} were developed 
	in \cite{Carrillo2008,Goudon2012,Goudon2013}.

	So far most of the aforementioned  references do not address the effect of size variations of particles, namely all particles have the same size.  This paper considers   kinetic-fluid model for a mixture of the flows for particles with {\it {distinct}} sizes.
	Such multi-size particle systems have a wide range of applications in engineering, especially for the complex meteorological simulation of large aircraft icing process. 
	For the distribution of droplets in the air, the impact of larger droplets contained in the droplet distribution cannot be ignored\cite{Cober2006,Hauf2006,Potapczuk2019}.
	However, it is very difficult to simulate 
	multi-size particles
	by experimental means. 
	Therefore, in this paper, we study mathematically the fluid-particles systems with distinct particle sizes and the uncertainty quantification aspect of the problem.

	Most of the work on kinetic-fluid  models are deterministic. However, there are many sources of uncertainty in these models. For example, initial  and boundary data are often obtained from experiments and therefore inevitably  have  measurement errors. Uncertainty can also come from the modeling of drag force, particle diffusion, etc. It is important to quantify these uncertainties because such quantification can help us understand how uncertainties affect the solution and thus make reliable predictions.
	Among popular  methods for uncertainty quantification (UQ) include the Monte Carlo (MC) method, the stochastic collocation (sC) method and the stochastic Galerkin (sG) method \cite{Xiu2009,Xiu2010,Gunzburger2014}. 
	
	We will consider our problems with uncertainty, characterized by random inputs in the initial data.  In order to analyze the accuracy of UQ methods, it is important to first analyze the regularity of the analytical solution in the random spaces. 
	For the sG method, regularity of truncated series approximations is required, but it is not straightforward to prove accuracy from regularity of the random space  due to the Galerkin projection error. 
	Instead, an evolution equation for the error can be derived and then estimates from the regularity in the random space of the exact solution can be achieved. 
	There have been a series of recent efforts to study the uniform (in the Knudsen number)  regularity in random space and/or local sensitivity for various types of kinetic equations, 
	including Jin et al. \cite{JinLiuMa2017} for linear transport equations, Jin-Zhu \cite{JinZhu2018} for the Vlasov-Poisson-Fokker-Planck equation, Jin-Liu \cite{JinLiu2017} and Liu \cite{Liu2018} for the linear semiconductor Boltzmann equation. In addition, the uniform regularity for the general linear transport equations conserving mass based on hypocoercivity is established in \cite{LiWang2017}. Uniform regularity is also obtained for nonlinear kinetic equations, such as Jin-Zhu \cite{JinZhu2018} for the Vlasov-Poisson-Fokker-Planck system, Shu-Jin \cite{ShuJin2018} for the Fokker-Planck-incompressible Navier-Stokes system based on \cite{GoudonHe2010}. A general framework for nonlinear collisional kinetic equations using hypocoercivity analysis in the random space  was provided in \cite{LiuJin2018}. 
	\cite{JinLiuMa2017, JinLiu2017, Liu2018} also prove the spectral accuracy for the sG method. 
	For   the linear convection-diffusion equation, the two velocity BGK model and the Fokker–Planck equation, \cite{ArnoldJin2020} gives an explicit construction of Lyapunov functionals that yields sharp decay estimates, including an extension to defective ODE systems. Numerically, the first UQ work for kinetic problems was done  in Jin, Xiu, and Zhu \cite{JinXiuZhu2015}, in which the notion of stochastic asymptotic-preserving (s-AP) was introduced. For later development see surveys \cite{HuJin2017, JinICM}.
	
	In this paper, we only consider the uncertainty from the initial data. To model the uncertainty here,let the velocity of the fluid and the distribution functions of particles with $N$ different sizes depend on the random variable $z$ (i.e., $u = u(t,x, z)$ and $F_i = F_i(t,x,v, z), i=1,2,\ldots,N$), which lives in the random space $\mathbb{Z}$ with probability distribution $\pi(z) dz$. The uncertainty from the initial data is then described by letting the initial data $u_0$ and $\{F_{i,0}\}_{i=1}^N$ depend on $z$. For notational clarity of our analysis, we  assume that the random space $\mathbb{Z}$ is one-dimensional. Our results can be extended to the case of multi-dimensional random spaces.

	Clearly, the convergence of numerical methods, such as the sG and sC methods, in the random space requires the regularity in the random space (hereafter be referred to $z$-regularity) \cite{JinLiuMa2017}. 
	Therefore, we first analyze the $z$-regularity for random initial data near the global equilibrium (where $F_i = \mu_i + \sqrt{\mu_i} f_i$ and $\mu_i$ is the local normalized Maxwellian) in some suitable Sobolev spaces (with derivatives with respect to $x$ and $z$). We use energy estimates and hypocoercivity arguments on the $z$-derivatives of $u$ and $f_i$. 
	The results show that for near-equilibrium initial data regularly dependent on $x$ and $z$, the solution always preserves the  regularity of the initial data and is insensitive to random perturbations of the initial data for large time. Then for the sG method, we consider the most popular choice of basis functions, the generalized polynomial chaos (gPC) \cite{Xiu2002}, i.e. orthogonal polynomials with respect to $\pi(z)dz$. We write equations for the gPC coefficients and do energy estimates, where we manage to make the estimate independent of the number $K$ of basis functions.  Finally, we write out the error equation for the gPC-sG method and do energy and hypocoercivity estimates. The results show that if the random initial data $(u_0, \{f_{i,0}\}_{i=1}^N)$ is sufficiently small in some suitable Sobolev spaces, the gPC-sG method has spectral accuracy, uniformly in time and the Knudsen number, and captures the exponential decay in time toward the global equilibrium of the exact solution. 
	
	One of the major difficulties in the hypocoercivity analysis of the gPC-SG method  is that a naive energy estimate for the gPC coefficients requires a small initial data condition depending on the number $K$ of basis functions, since the nonlinear term in the system produces a large number  ($K^3$) of terms in the equation  of gPC coefficients. But it is desirable to have a small initial data condition that is independent of the numerical parameter $K$, which means that the accuracy results are correct for this set of initial data for all $K$. To overcome this difficulty, we introduce a weighted sum of the  Sobolev norm of the gPC coefficients, which allows us to combine some terms together as part of a convergent series and use $K$-independent estimates to control nonlinear terms.
	
	The paper is organized as follows: 
	in Section 2, we describe precisely the PDE system of interest to us, including the hydrodynamic limit system and the system near the global equilibrium; 
	in Section 3, we state the energy estimate and hypocoercivity estimate for the $z$-derivatives of $u$ and $f_i$ and give the proof;
	in Section 4 we state and prove the spectral accuracy and long-time behavior of the sG method. 
	Finally, we conclude in Section 5.

	\section{Model Problem}
	
	In this paper, we focus on the fine particle regime, in which
	the suitably scaled PDE systems for the multi-phase model  are given by: 
	\begin{equation}\label{ModelEquation0}
		\left\{\begin{aligned}
			&\begin{aligned}
				(\tilde{F_i})_{t}+v \cdot \nabla_{x} (\tilde{F_i})&=\dfrac{1}{\epsilon}\dfrac{1}{i^{2/3}}\operatorname{div}_{v}\left((v-\tilde{u}) \tilde{F_i}+\dfrac{\bar{\theta}}{i}\nabla_{v} \tilde{F_i}\right),\\
				&\hspace{7em} \quad(t, x, v) \in \mathbb{R}^{+} \times \mathbb{T}^{3} \times \mathbb{R}^{3}, i=1,2,...,N, 
			\end{aligned}
			\\
			&\tilde{u}_{t}+(\tilde{u} \cdot \nabla_{x}) \tilde{u}+\nabla_{x} \tilde{p}-\Delta_{x} \tilde{u}=\dfrac{\kappa}{\epsilon} \sum_{i=1}^N\int_{\mathbb{R}^{3}}(v-\tilde{u}) \tilde{F_i} i^{1/3} \mbox{d} v, \quad(t, x) \in \mathbb{R}^{+} \times \mathbb{T}^{3}, \\
			&\nabla_{x} \cdot \tilde{u}=0,
		\end{aligned}\right.
	\end{equation}
	with the initial condition
	$$\left.\tilde{u}\right|_{t=0}=\tilde{u}_{0}, \quad \nabla_{x} \cdot \tilde{u}_{0}=0,\left.\quad \tilde{F_i}\right|_{t=0}=\tilde{F}_{i,0},$$
	where $t\geq 0$ is  time, $x\in \mathbb{T}= [-\pi,\pi]^3$ is the space variable, and $v\in \mathbb{R}^3$ is the particle velocity.
	For simplicity the  periodic boundary condition in the space domain is assumed. The fluid is described by its velocity field $\tilde{u}(t,x)\in \mathbb{R}^{3}$ and its pressure $\tilde{p}(t,x)$. The particles are described by their distribution function $\tilde{F_i} = \tilde{F_i}(t,x,v), i=1,2,\ldots,N$ in phase space. $N$ is the number of sizes of particles.
	$\epsilon$ is the Knudsen number, which satisfies $0<\epsilon\leq 1$. $\epsilon=O(1)$ corresponds to the kinetic regime, while $\epsilon\rightarrow 0$ corresponds to the fluid regime. $\kappa>0$ is the coupling constant, which equals the ratio between the particle density and fluid density.

	This system satisfies the following conservation properties. 
	\begin{itemize}
		\item mass conservation
		$$\frac{d}{d t} \sum_{i=1}^{N} \int_{\mathbb{T}^{3} \times \mathbb{R}^{3}}i \tilde{F}_i \,\mathrm{d} v\mathrm{d}x=0,$$
		\item momentum  conservation
		\begin{equation}\label{momentum}
			\frac{d}{d t}\left(\int_{\mathbb{T}^{3}} \tilde{u} \,\mathrm{d} x+\sum_{i=1}^{N} \int_{\mathbb{T}^{3} \times \mathbb{R}^{3}} i v \tilde{F}_i \,\mathrm{d} v\mathrm{d}x\right)=0,
		\end{equation}
		\item energy-entropy dissipation
		$$
		\begin{aligned}
			\frac{\mathrm{d}}{\mathrm{d} t}\left(\kappa\sum_{i=1}^{N} \int_{\mathbb{T}^{3}} \int_{\mathbb{R}^{3}} \tilde{F}_i\left(\ln (\tilde{F}_i)+1+i\frac{|v|^{2}}{2}\right) \,\mathrm{d} v \mathrm{d} x+\int_{\mathbb{T}^{3}}\frac{|\tilde{u}|^{2}}{2}\, \mathrm{d} x\right)+\int_{\mathbb{T}^{3}}\left|\nabla_{x} \tilde{u}\right|^{2} \,\mathrm{d} x\\
			+\frac{\kappa}{\epsilon}\sum_{i=1}^{N} \int_{\mathbb{T}^{3}} \int_{\mathbb{R}^{3}}i^{1/3}\left|(v-\tilde{u}) \sqrt{\tilde{F}_i}+\frac{\theta}{i} \frac{\nabla_{v} \tilde{F}_i}{\tilde{F}_i}\right|^{2} \, \mathrm{d} v \mathrm{d} x=0.
		\end{aligned}$$
	\end{itemize}

	\subsection{Hydrodynamic limit}

	For the deterministic multi-size particle-fluid systems \eqref{ModelEquation0}, we associate to $\tilde{F_i}(t,x,v), i=1,2,\ldots,N$ the following macroscopic quantities:
	$$
	\begin{gathered}
		n_{i}(t, x)=\int_{\mathbb{R}^{3}} \tilde{F_i}(t, x, v) \mathrm{d} v, \quad \rho_i(t,x) = i n_i(t,x), 
		\quad J_{i}(t, x)= i \int_{\mathbb{R}^{3}} v \tilde{F_i}(t, x, v) \mathrm{d} v, \\
		\mathbb{P}_{i}(t, x)=i \int_{\mathbb{R}^{3}} v \otimes v \tilde{F_i}(t, x, v) \mathrm{d} v ,
	\end{gathered}
	$$
	where $\rho_i, J_i$ and $\mathbb{P}_{i}$ are the mass, momentum and stress tensors, respectively, of particles of size $i$.
	Integrating the first equation in \eqref{ModelEquation0} with respect to $i \mathrm{~d} v$ and $i v \mathrm{~d} v$ respectively, one obtains
	$$
	i \partial_{t} n_{i}+\nabla_{x} \cdot J_{i}=0,
	$$
	and
	$$\partial_{t} J_{i}+\operatorname{Div}_{x} \mathbb{P}_{i}=-\frac{1}{\epsilon} \frac{1}{i^{2 / 3}}\left(J_{i}-i n_{i} \tilde{u}\right).$$
	We remark that  system \eqref{ModelEquation0} conserves the total momentum since
	\begin{equation}\label{eq:kappaJ}
		\partial_{t}\left(\tilde{u}+ \kappa \sum_{i=1}^{N} J_{i}\right)+\operatorname{Div}_{x}\left(\tilde{u} \otimes \tilde{u}+\kappa \sum_{i=1}^{N} \mathbb{P}_{i}\right)+ \nabla_{x} \tilde{p}-\Delta_{x} \tilde{u}=0.
	\end{equation}
	Accordingly, for $\epsilon<<1$,  $J_i$ and $\mathbb{P}_i$ are approximated by the moments of the Maxwellian, i.e., 
	$$J_i \simeq i n_i \tilde{u}, \quad \mathbb{P}_i \simeq i n_i \tilde{u} \otimes \tilde{u}+ i n_i  \mathbb{I}.$$
	Inserting this ansatz into \eqref{eq:kappaJ}, one arrives at
	\begin{equation}
		\partial_{t}\left(1\left(1+ \kappa \sum_{i=1}^{N}  i n_i\right)\tilde{u}\right)+\operatorname{Div}_{x}\left(\left(1+ \kappa \sum_{i=1}^{N}  i n_i\right)\tilde{u} \otimes \tilde{u}\right)+ \nabla_{x} \left(\tilde{p}+ \kappa \sum_{i=1}^{N}  i n_i\right)-\Delta_{x} \tilde{u}=0, 
	\end{equation}
	
	Thus, as $\epsilon\rightarrow 0$,  \eqref{ModelEquation0} has a hydrodynamic limit
	\begin{equation}\label{eq:iniu}
		\left\{\begin{aligned}
			& \partial_{t} n_i +\nabla_{x} \cdot(n_i \tilde{u})=0, \\
			&\partial_{t}\left(\left(1+ \kappa \sum_{i=1}^{N}  i n_i\right)\tilde{u}\right)+\operatorname{Div}_{x}\left(\left(1+ \kappa \sum_{i=1}^{N}  i n_i\right)\tilde{u} \otimes \tilde{u}\right)+ \nabla_{x} \left(\tilde{p}+ \kappa \sum_{i=1}^{N}  i n_i\right)-\Delta_{x} \tilde{u}=0, \\
			&\nabla_{x} \cdot \tilde{u}=0.
		\end{aligned}\right.
	\end{equation}
	Denote $\nu = \displaystyle\sum_{i=1}^{N}  i n_i$.
	Then system \eqref{eq:iniu} becomes
	\begin{equation}\label{eq:nu}
		\left\{\begin{aligned}
			& \partial_{t} \nu +\nabla_{x} \cdot(\nu \tilde{u})=0, \\
			&\partial_{t}\left(\left(1+ \kappa \nu\right)\tilde{u}\right)+\operatorname{Div}_{x}\left(\left(1+ \kappa \nu\right)\tilde{u} \otimes \tilde{u}\right)+ \nabla_{x} \left(\tilde{p}+ \kappa \nu \right)-\Delta_{x} \tilde{u}=0, \\
			&\nabla_{x} \cdot \tilde{u}=0,
		\end{aligned}\right.
	\end{equation}
	which is the incompressible Navier-Stokes system for the composite and inhomogeneous density $(1+\kappa \nu)$.
	
	\subsection{near equilibrium}

	Consider the local normalized Maxwellian
	$$\mu_i(v)=\frac{1}{(\frac{2 \pi \bar{\theta}}{i})^{3 / 2}|\mathbb{T}|^{3}} e^{-\frac{i v^{2}}{2\bar{\theta}} },$$
	and look at solutions of \eqref{ModelEquation0} which read
	\begin{equation}\label{eq:F_i0}
		\tilde{F_i}= \mu_i + \sqrt{\mu_i} \tilde{f}_i.
	\end{equation}
	Plugging \eqref{eq:F_i0} into \eqref{ModelEquation0}, one obtains the following system for the perturbation $(\tilde{u},\{\tilde{f}_i\}_{i=1}^N)$:
	\begin{equation}\label{eq:uf0}
		\left\{\begin{aligned}
			&(\tilde{f}_i)_{t}+v \cdot \nabla_{x} (\tilde{f}_i)+\frac{1}{i^{2/3}\epsilon}\tilde{u} \cdot\left(\nabla_{v} \tilde{f}_i-\frac{ i v}{2 \bar{\theta}} \tilde{f}_i\right)-\frac{i^{1/3}}{\bar{\theta}\epsilon}\tilde{u} \cdot v \sqrt{\mu_i}=\frac{1}{i^{2/3}\epsilon}\left(-\frac{i}{\bar{\theta}}\frac{|v|^{2}}{4} \tilde{f}_i+\frac{3}{2} \tilde{f}_i+\frac{\bar{\theta}}{i}\Delta_{v} \tilde{f}_i\right), \\
			&\tilde{u}_{t}+\tilde{u} \cdot \nabla_{x} \tilde{u}+\nabla_{x} \tilde{p}-\Delta_{x} \tilde{u}+\frac{\kappa}{\epsilon}\tilde{u}\sum_{i=1}^{N} i^{1/3} +\frac{\kappa}{\epsilon} \tilde{u} \sum_{i=1}^{N}i^{1/3}  \int_{\mathbb{R}^{3}} \sqrt{\mu_i} \tilde{f}_i  \mathrm{d} v-\frac{\kappa}{\epsilon}\sum_{i=1}^{N} i^{1/3}  \int_{\mathbb{R}^{3}} v \sqrt{\mu_i} \tilde{f}_i \mathrm{d} v=0, \\
			&\nabla_{x} \cdot \tilde{u}=0,
		\end{aligned}	
		\right.
	\end{equation}
	with the initial data
	\begin{equation*}
		\left.\tilde{u}\right|_{t=0}=\tilde{u}_{0},\left.\quad \tilde{f}_i\right|_{t=0}=\tilde{f}_{i,0}, \quad \  \int_{\mathbb{T}^{3}}\int_{\mathbb{R}^{3}} \sqrt{\mu_i} \tilde{f}_{i,0} \,\mathrm{d} v \mathrm{d} x=0,
	\end{equation*}
	and
	\begin{equation*}
		\int_{\mathbb{T}^{3}} \tilde{u}_{0} \mathrm{d} x+\sum_{i=1}^{N}\int_{\mathbb{T}^{3}} \int_{\mathbb{R}^{3}}i v \sqrt{\mu_i} f_{i,0} \,\mathrm{d} v \mathrm{d} x=0, \quad \quad \nabla_x \cdot \tilde{u}_{0}=0,
	\end{equation*}
	which show that the fluctuations of the initial data, $(\tilde{u}_0, \{\tilde{f}_{i,0}\}_{i=1}^N)$, do not affect the total momentum and mass, and the perturbation of the fluid velocity is divergence free.
	
	Rigorous proofs of existence-uniqueness for this kinetic-fluid multi-phase flow model \eqref{eq:uf0} with distinct particle sizes near equilibrium will be investigated in future work. This paper focuses on the energy estimate and hypocoercivity analysis for such multi-phase systems with uncertainties.

	\section{Uncertainty and Regularity in the random space}
	
	To model the uncertainty, let the velocity field of the fluid and the distribution functions of  particles depend on the random variable $z$ (i.e., $u = u(t,x, z)$ and $F_i = F_i(t,x,v, z), i=1,2,\ldots,N$), which lives in the random space $\mathbb{Z}$ with probability distribution $\pi(z) dz$. 
	For clarity of notations, we  assume that the random space $\mathbb{Z}$ is one-dimensional. The PDE systems for a mixture of flows with uncertainties are given by: 
	\begin{equation}\label{ModelEquation}
		\left\{\begin{aligned}
			&\begin{aligned}
				(F_i)_{t}+v \cdot \nabla_{x} (F_i)&=\dfrac{1}{\epsilon}\dfrac{1}{i^{2/3}}\operatorname{div}_{v}\left((v-u) F_i+\dfrac{\bar{\theta}}{i}\nabla_{v} F_i\right), \\
				&\hspace{6em} (t, x, v, z) \in \mathbb{R}^{+} \times \mathbb{T}^{3} \times \mathbb{R}^{3} \times \mathbb{Z}, i=1,2,...,N, 
			\end{aligned}\\
			&u_{t}+(u \cdot \nabla_{x}) u+\nabla_{x} p-\Delta_{x} u=\dfrac{\kappa}{\epsilon} \sum_{i=1}^N\int_{\mathbb{R}^{3}}(v-u) F_i i^{1/3} \mbox{d} v, \quad(t, x,z) \in \mathbb{R}^{+} \times \mathbb{T}^{3}, \\
			&\nabla_{x} \cdot u=0,
		\end{aligned}\right.
	\end{equation}	 
	with the initial condition that depends on $z$:
	$$\left.u\right|_{t=0}=u_{0}, \quad \nabla_{x} \cdot u_{0}=0,\left.\quad F_i\right|_{t=0}=F_{i,0},$$
	where $u_{0}=u(0,x,z)$, $F_{i,0}=F_i(0,x,v,z), i=1,2,\ldots,N$.
	
	Denote by $F_i = \mu_i + \sqrt{\mu_i} f_i$ as in \eqref{eq:F_i0}. 
	The following system for the perturbation $(u,\{f_i\}_{i=1}^N)$ is achieved: 
	\begin{equation}\label{eq:uf}
		\left\{\begin{aligned}
			&(f_i)_{t}+v \cdot \nabla_{x} (f_i)+\frac{1}{i^{2/3}\epsilon}u \cdot\left(\nabla_{v} \frac{ i v}{2 \bar{\theta}} \right)f_i-\frac{i^{1/3}}{\bar{\theta}\epsilon}u \cdot v \sqrt{\mu_i}=\frac{1}{i^{2/3}\epsilon}\left(-\frac{i}{\bar{\theta}}\frac{|v|^{2}}{4}+\frac{3}{2} +\frac{\bar{\theta}}{i}\Delta_{v} \right)f_i, \\
			&u_{t}+u \cdot \nabla_{x} u+\nabla_{x} p-\Delta_{x} u+\frac{\kappa}{\epsilon}u\sum_{i=1}^{N} i^{1/3} +\frac{\kappa}{\epsilon} u \sum_{i=1}^{N}i^{1/3}  \int_{\mathbb{R}^{3}} \sqrt{\mu_i} f_i  \mathrm{d} v-\frac{\kappa}{\epsilon}\sum_{i=1}^{N} i^{1/3}  \int_{\mathbb{R}^{3}} v \sqrt{\mu_i} f_i \mathrm{d} v=0, \\
			&\nabla_{x} \cdot u=0,
		\end{aligned}	
		\right.
	\end{equation}
	with the initial data
	\begin{equation}
		\left.u\right|_{t=0}=u_{0},\left.\quad f_i\right|_{t=0}=f_{i,0},
	\end{equation}
	\begin{equation}\label{initialu0}
		\int_{\mathbb{T}^{3}} u_{0} \mathrm{d} x+\sum_{i=1}^{N}\int_{\mathbb{T}^{3}} \int_{\mathbb{R}^{3}}i v \sqrt{\mu_i} f_{i,0} \,\mathrm{d} v \mathrm{d} x=0, \quad \quad \nabla_x \cdot u_{0}=0,
	\end{equation}
	and
	\begin{equation}\label{initialu0_2}
		\int_{\mathbb{T}^{3}}\int_{\mathbb{R}^{3}} \sqrt{\mu_i} f_{i,0} \,\mathrm{d} v \mathrm{d} x=0.
	\end{equation}
	
	Define the mean fluid velocity
	$$
	\bar{u}(t, z) \stackrel{\text { def }}{=} \frac{1}{|\mathbb{T}|^{3}} \int_{\mathbb{T}^{3}} u(t, x, z) \mathrm{d} x .
	$$
	Averaging the second equation in \eqref{eq:uf} yields 
	\begin{equation}\label{eq:ubar}
		\bar{u}_{t} +\frac{\kappa}{\epsilon} \sum_{i=1}^{N} i^{1/3} \bar{u}+\frac{\kappa}{\epsilon}\frac{1}{|\mathbb{T}|^{3}}\sum_{i=1}^{N} \int_{\mathbb{T}^{3}}  \int_{\mathbb{R}^{3}} \sqrt{\mu_i} u f_i i^{1/3}\mathrm{d} v \mathrm{d} x = \frac{\kappa}{\epsilon}\frac{1}{|\mathbb{T}|^{3}}\sum_{i=1}^{N} i^{1/3}  \int_{{\mathbb{T}}^{3}}\int_{\mathbb{R}^{3}} v \sqrt{\mu_i} f_i \mathrm{d} v \mathrm{d} x .
	\end{equation}
	On the other hand, the momentum conservation \eqref{momentum} together with the condition \eqref{initialu0} implies
	\begin{equation}\label{eq:ubar2}
		-\frac{1}{|\mathbb{T}|^{3}}\sum_{i=1}^{N}\int_{{\mathbb{T}}^{3}}\int_{{\mathbb{R}}^{3}}i v \sqrt{\mu_i} f_i \mathrm{d} v \mathrm{d} x=\frac{1}{|\mathbb{T}|^{3}}\int_{\mathbb{T}^{3}} u \mathrm{d} x=\bar{u} .
	\end{equation}
	By multiplying the above by $\bar{u}$, one has
	\begin{equation}\label{ineq:ubar}
		\frac{1}{2} \frac{d}{d t}|\bar{u}|^{2}+\frac{\kappa}{\epsilon} \sum_{i=1}^{N} i^{1/3}|\bar{u}|^{2}
		+\frac{\kappa}{\epsilon}\frac{\bar{u}}{|\mathbb{T}|^{3}} \sum_{i=1}^{N} \int_{\mathbb{T}^{3}}  \int_{\mathbb{R}^{3}} \sqrt{\mu_i} u f_i i^{1/3}\mathrm{d} v \mathrm{d} x \leq \frac{\kappa}{\epsilon} |\bar{u}|^{2}.
	\end{equation}

	\subsection{Notations}
	
	Due to the random variable $z$, our notation is different from \cite{GoudonHe2010} and similar to \cite{ShuJin2018} with different definitions of inner products related to hypocoercivity arguments. All the norms or inner products with a single bound (like $|\cdot|,(\cdot, \cdot),[\cdot, \cdot])$, integral in $x, v$ and pointwise in $z$, is a function of $z$. All the norms or inner products with a double bound (like $\|\cdot\|,((\cdot, \cdot)),[[\cdot, \cdot]])$, integral with respect to all variables, is a number.
	
	Let $\alpha=\left(\alpha_{1}, \alpha_{2}, \alpha_{3}\right)$ be a multi-index. Then define
	$
	\partial^{\alpha}=\partial_{x_{1}}^{\alpha_{1}} \partial_{x_{2}}^{\alpha_{2}} \partial_{x_{3}}^{\alpha_{3}}.
	$
	The $z$-derivative of order $\gamma$ of a function $f$ is denoted by
	$
	f^{\gamma}=\partial_{z}^{\gamma} f.
	$
	
	For function $u=u(x), f=f(x, v)$, define the Sobolev norm (with $x$-derivatives)
	$$
	\|u\|_{s}^{2}=\sum_{|\alpha| \leq s}\left\|\partial^{\alpha} u\right\|_{L_{x}^{2}}^{2}, \quad\|f\|_{s}^{2}=\sum_{|\alpha| \leq s}\left\|\partial^{\alpha} f\right\|_{L_{x, v}^{2}}^{2}.
	$$
	In particular, denote by $\|u\|_{0}$  the $L_{x}^{2}$ norm of $u$. 
	
	For function $u=u(x, z), f=f(x, v, z)$, define the sum of Sobolev norms
	\begin{equation}\label{Sobolevnorm}
		|u|_{s, r}^{2}=\sum_{|\gamma| \leq r}\left\|u^{\gamma}(\cdot, z)\right\|_{s}^{2}, \quad|f|_{s, r}^{2}=\sum_{|\gamma| \leq r}\left\|f^{\gamma}(\cdot, \cdot, z)\right\|_{s}^{2},	
	\end{equation}
	where $|u|_{s, r}$ and $|f|_{s, r}$ are functions of $z$. Then define the expected value of the total Sobolev norm by
	$$
	\|u\|_{s, r}^{2}=\int|u|_{s, r}^{2} \pi(z) \mathrm{d} z, \quad\|f\|_{s, r}^{2}=\int|f|_{s, r}^{2} \pi(z) \mathrm{d} z.
	$$
	
	For function $\bar{u}=\bar{u}(z)$, also define the sum of derivatives and the Sobolev norm by
	$$
	|\bar{u}|_{r}^{2}=\sum_{|\gamma| \leq r}\left|\bar{u}^{\gamma}\right|^{2}, \quad\|\bar{u}\|_{r}^{2}=\int|\bar{u}|_{r}^{2} \pi(z) \mathrm{d} z.
	$$
	In all these notations, the sub-index $r$ is omitted when $r=0$.
	
	The $L^{2}$ inner product of functions defined on $x$-space of $x, v$-space is denoted by $\langle\cdot, \cdot\rangle$, i.e.,
	$$
	\langle f, g\rangle=\int f g \mathrm{d} x, \quad \text { or } \quad\langle f, g\rangle=\iint f g \mathrm{d} v \mathrm{d} x .
	$$
	In case the inputs also depend on $z$, $\langle f, g\rangle$ only integrates in $x$ or $(x, v)$, and the inner product is a function of $z$. For example,
	$$
	\langle f, g\rangle(z)=\int f(x, z) g(x, z) \mathrm{d} x.
	$$

	Next we introduce the inner products related to the hypocoercivity arguments. Define
	$$
	\mathcal{K}_i=\dfrac{\bar{\theta}}{i}\nabla_{v}+\frac{v}{2}, \quad \mathcal{P}_i=\dfrac{\bar{\theta}}{i} v \cdot \nabla_{x}, \quad \mathcal{S}_{j}=\left[\mathcal{K}_{ij}, \mathcal{P}_i\right]=\mathcal{K}_{ij} \mathcal{P}_i-\mathcal{P}_i \mathcal{K}_{ij}=\dfrac{\bar{\theta}^2}{i^2}\partial_{x_{j}}, \quad \mathcal{K}_i^{*}=-\dfrac{\bar{\theta}}{i}\nabla_{v}+\frac{v}{2},
	$$
	where $\mathcal{K}_i^{*}$ is the adjoint operator of $\mathcal{K}_i$, in the sense that $\langle\mathcal{K}_i f_i, g_i\rangle=\left\langle f_i, \mathcal{K}_i^{*} \cdot g_i\right\rangle$, where $f_i$ has one component and $g_i$ has three components.

	For functions $f_i=f_i(x, v), g_i=g_i(x, v)$, define
	$$
	\begin{aligned}
		&(f_i, g_i)=\frac{1}{i^{1/3}}\left(2\langle\mathcal{K}_i f_i, \mathcal{K}_i g\rangle+\epsilon^2\langle\mathcal{K}_i f_i, \mathcal{S}_i g_i\rangle+\epsilon^2\langle\mathcal{S}_i f_i, \mathcal{K}_i g_i\rangle+\epsilon^3\langle\mathcal{S}_i f_i, \mathcal{S}_i g_i\rangle\right), \\
		&[f_i, g_i]=\langle\mathcal{K}_i f_i, \mathcal{K}_i g_i\rangle+\epsilon^4\langle\mathcal{S}_i f_i, \mathcal{S}_i g_i\rangle+\epsilon^2\left\langle\mathcal{K}_i^{2} f_i, \mathcal{K}_i^{2} g_i\right\rangle+\epsilon^4\langle\mathcal{K}_i \mathcal{S}_i f_i, \mathcal{K}_i \mathcal{S}_i g_i\rangle,
	\end{aligned}
	$$
	where we denote $\langle\mathcal{K}_i \mathcal{S}_i f_i, \mathcal{K}_i \mathcal{S}_i g_i\rangle:=\sum_{j, l=1}^{3}\left\langle\mathcal{K}_{ij} \mathcal{S}_{il} f_i, \mathcal{K}_{ij} \mathcal{S}_{il} g_i\right\rangle .$
	
	For functions $f=f(x, v, z), g=g(x, v, z)$, define
	\begin{equation}\label{def:fg}
		\begin{aligned}
			(f, g)_{s, r}=\sum_{|\gamma| \leq r} \sum_{|\alpha| \leq s}\left(\partial^{\alpha} f^{\gamma}(\cdot, \cdot, z), \partial^{\alpha} g^{\gamma}(\cdot, \cdot, z)\right),\\
			[f, g]_{s, r}=\sum_{|\gamma| \leq r} \sum_{|\alpha| \leq s}\left[\partial^{\alpha} f^{\gamma}(\cdot, \cdot, z), \partial^{\alpha} g^{\gamma}(\cdot, \cdot, z)\right],
		\end{aligned}
	\end{equation}
	where $(f, g)_{s, r}$ or $[f, g]_{s, r}$ is a function of $z$.
	
	Then we introduce the inner product in the $(x, v, z)$ space:
	$$
	\langle\langle f, g\rangle\rangle=\int\langle f, g\rangle \pi(z) \mathrm{d} z,\quad
	(( f, g))=\int( f, g)\pi(z) \mathrm{d} z,\quad
	[[ f, g]]=\int[ f, g] \pi(z) \mathrm{d} z,
	$$
	$$
	(( f, g))_{s, r}=\int( f, g)_{s, r}\pi(z) \mathrm{d} z,\quad
	[[ f, g]]_{s, r}=\int[ f, g]_{s, r} \pi(z) \mathrm{d} z.
	$$
	We also define the following norms in the $(x, v, z)$ space:
	\begin{equation}\label{def:finfty}
		\|u\|_{W^{s, \infty}} =\max _{|\alpha| \leq s}\left\|\partial^{\alpha} u\right\|_{L_{x, z}^{\infty}}, 
		\|f\|_{W^{s, \infty}} =\max _{|\alpha| \leq s}\left\|\partial^{\alpha} f\right\|_{L_{x, z}^{\infty}\left(L_{v}^{2}\right)}.
	\end{equation}
	
	\subsection{Main Results}
	
	Now we focus on the system \eqref{eq:uf} with the random variable $z$.
	
	Our first result is the following energy estimate assuming near-equilibrium initial data:
	
	\begin{theorem}\label{thm:energyestimate}
		Assume $(u, \{f_i\}_{i=1}^N)$ solves \eqref{eq:uf} with initial data verifying \eqref{initialu0}. Fix a point $z$. Define the energy
		\begin{equation}\label{def:E}
			E(t ; z)=E_{s, r}(t ; z)=|u|_{s,r}^{2}+\kappa\bar{\theta}\sum_{i=1}^{N}|f_i|_{s, r}^{2}+|\bar{u}|_{r}^{2},
		\end{equation}
		with integers $s \geq 2$ and $r \geq 0$. Then there exists a constant $c_{1}=c_{1}(s, r)>0$, such that $E(0 ; z) \leq c_{1}$ implies that $E(t ; z)$ is non-increasing in $t$.
	\end{theorem}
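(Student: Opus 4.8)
The plan is to establish a pointwise-in-$z$ differential inequality
\begin{equation*}
	\frac{d}{dt}E(t;z) \le -\mathcal{D}(t;z) + C\sqrt{E(t;z)}\,\mathcal{D}(t;z),
\end{equation*}
for a coercive dissipation functional $\mathcal{D}\ge 0$, and then to absorb the nonlinear remainder into $\mathcal{D}$ under a smallness assumption on $E$. Fixing $z$, I would first apply $\partial^\alpha\partial_z^\gamma$ for every $|\alpha|\le s$ and $|\gamma|\le r$ to the three equations of \eqref{eq:uf}. For the fluid equation I would take $\langle\,\cdot\,,\partial^\alpha u^\gamma\rangle$ (an $x$-integral, hence a function of $z$): the viscous term $-\Delta_x u$ produces $+\langle\nabla_x\partial^\alpha u^\gamma,\nabla_x\partial^\alpha u^\gamma\rangle$, the pressure drops out by $\nabla_x\cdot u=0$, the damping $\frac{\kappa}{\epsilon}u\sum_i i^{1/3}$ has a favorable sign, and the convective and coupling terms are retained as remainders. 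For the kinetic equations I would use the twisted pointwise products $(\partial^\alpha f_i^\gamma,\partial^\alpha f_i^\gamma)$ weighted by $\kappa\bar\theta$ and summed over $i$, matching \eqref{def:E} and \eqref{def:fg}: the Fokker--Planck part supplies velocity dissipation through the $\mathcal{K}_i$ blocks, while the transport $v\cdot\nabla_x$ together with the commutator $\mathcal{S}_j=[\mathcal{K}_{ij},\mathcal{P}_i]$ and the $\epsilon$-graded cross terms $\langle\mathcal{K}_i\,\cdot\,,\mathcal{S}_i\,\cdot\,\rangle$ supplies the missing spatial dissipation. The $\bar u$-contribution is handled by the already-derived inequality \eqref{ineq:ubar}, whose left-hand damping $\frac{\kappa}{\epsilon}\sum_i i^{1/3}|\bar u|^2$ dominates the $\frac{\kappa}{\epsilon}|\bar u|^2$ on its right, leaving a favorable sign modulo a nonlinear cross term.

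The core of the linear analysis is to verify that, after summing the fluid, kinetic, and $\bar u$ estimates, every singular $\frac1\epsilon$ contribution organizes into a single nonnegative $\mathcal{D}(t;z)$ that controls $\nabla_x u$, the $\mathcal{K}_i$-velocity gradients of the $f_i$, and --- crucially --- the full spatial derivatives of the $f_i$, uniformly in $0<\epsilon\le 1$. The precise powers $\epsilon^2,\epsilon^3,\epsilon^4$ in the definitions of $(\cdot,\cdot)$ and $[\cdot,\cdot]$ are exactly what makes the cross terms telescope so that no negative power of $\epsilon$ survives; this is where I would use the identity $\mathcal{S}_j=\frac{\bar\theta^2}{i^2}\partial_{x_j}$ to turn velocity-twisted quantities into genuine spatial derivatives. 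The moments $\int v\sqrt{\mu_i}f_i\,\mathrm{d} v$ and $\int\sqrt{\mu_i}f_i\,\mathrm{d} v$ that couple the two equations would be bounded by Cauchy--Schwarz in $v$ against the Gaussian weight $\mu_i$, so that their $\frac1\epsilon$ prefactor is absorbed by the coercive $\frac1\epsilon$-dissipation already present rather than producing a loss.

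I would then treat the nonlinear remainder, which collects the convection $u\cdot\nabla_x u$, the quadratic coupling $u\int\sqrt{\mu_i}f_i\,\mathrm{d} v$, the cross term of \eqref{ineq:ubar}, and all the $x,z$-derivatives these generate through the Leibniz rule. Since $s\ge 2$ supplies the embedding $H^s(\mathbb{T}^3)\hookrightarrow L^\infty$, each such product is bounded by placing the lowest-order factor in $L^\infty$ and the highest-order factor in $L^2$, giving a schematic bound $C\sqrt{E(t;z)}\,\mathcal{D}(t;z)$: one copy of the solution is measured by the energy $E$ while the remaining derivatives land in the dissipation. The sum over the $N$ sizes with the fixed weights $i^{1/3},i^{2/3}$ contributes only finite $i$-dependent constants and no essential difficulty.

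Combining the three estimates gives the displayed inequality with $C=C(s,r)$. Choosing $c_1=c_1(s,r)$ so small that $C\sqrt{c_1}\le\tfrac12$, I would close the argument by a continuity/bootstrap step: as long as $E(t;z)\le c_1$ one has $\frac{d}{dt}E(t;z)\le-\tfrac12\mathcal{D}(t;z)\le 0$, so $E$ cannot rise above its initial value $E(0;z)\le c_1$, the smallness persists for all $t$, and therefore $E(t;z)$ is non-increasing. The main obstacle I anticipate is not the nonlinearity but the uniform-in-$\epsilon$ bookkeeping of the linear part: confirming that the $\epsilon$-graded cross terms in $(\cdot,\cdot)$ and $[\cdot,\cdot]$ genuinely cancel the singular contributions and leave a coercive $\mathcal{D}$ requires tracking every commutator between $v\cdot\nabla_x$ and the $\mathcal{K}_i$-operators at each order $|\alpha|\le s$.
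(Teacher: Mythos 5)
Your overall skeleton --- a pointwise-in-$z$ differential inequality $\partial_t E \le -(1-C\sqrt{E})\mathcal{D}$ followed by absorption and a continuity/bootstrap argument, with the fluid part tested against $\partial^\alpha u^\gamma$ and the $\bar u$ part handled via \eqref{ineq:ubar} --- matches the paper. But your treatment of the kinetic equations takes a wrong turn. The paper's proof of this theorem is a \emph{plain} $L^2$ estimate: the first equation is multiplied by $\kappa\bar\theta\,\partial^\alpha f_i^\gamma$ and integrated in $(x,v)$, under which the transport term $v\cdot\nabla_x$ is skew-adjoint and vanishes identically, and the singular $\frac{1}{\epsilon}$ linear contributions (the damping $\frac{\kappa}{\epsilon}u\sum_i i^{1/3}$, the two moment couplings $\int v\sqrt{\mu_i}f_i\,\mathrm{d}v$ and $\int\sqrt{\mu_i}(uf_i)\,\mathrm{d}v$, and the Fokker--Planck operator) combine exactly into the nonnegative perfect square $\frac{\kappa}{\epsilon}\sum_i i^{1/3}\bigl|u^\gamma\sqrt{\mu_i}-\frac{\bar\theta}{i}\nabla_v f_i^\gamma-\frac{v}{2}f_i^\gamma\bigr|_s^2$. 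No commutators with $\mathcal{K}_i$, no $\mathcal{S}_i$, no $\epsilon$-graded cross terms, and no spatial dissipation of the $f_i$ are needed: the dissipation $G$ only has to be nonnegative, not coercive on the whole energy, because the theorem claims monotonicity rather than decay. The obstacle you single out as the main difficulty (uniform-in-$\epsilon$ cancellation of the hypocoercive cross terms) simply does not arise here.

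The gap created by your choice of the twisted products $(\cdot,\cdot)$ is not merely stylistic. First, $(f_i,f_i)_{s,r}$ as defined in \eqref{def:fg} is \emph{not} equal to $|f_i|_{s,r}^2$, so a differential inequality for $|u|_{s,r}^2+\kappa\bar\theta\sum_i(f_i,f_i)_{s,r}+|\bar u|_r^2$ does not yield that the $E$ of \eqref{def:E} is non-increasing --- at best it bounds $E(t)$ by a multiple of a modified initial energy, which is the content of Theorem \ref{thm:hypocoercivity}, not of this theorem. Second, recovering $|f_i|_{s,r}^2$ from $|\mathcal{K}_if_i|^2+\epsilon^2|\mathcal{S}_if_i|^2$ requires Lemma \ref{Lem2.1}, which needs $f_i\perp\sqrt{\mu_i}$, i.e.\ the extra hypothesis \eqref{initialu0_2} that Theorem \ref{thm:energyestimate} deliberately does not assume. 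Third, the hypocoercivity estimate (Lemma \ref{Lem2.2}) has on its right-hand side the terms $|u|_{s,r}^2+|\nabla_x u|_{s,r}^2+\frac{1}{\epsilon^2}|\mathcal{K}_if_i|_{s,r}^2$, which in the paper are absorbed precisely by the $G$ coming from the plain energy estimate; the twisted estimate therefore cannot close on its own, and using it as the starting point for Theorem \ref{thm:energyestimate} is circular. You should run the plain $L^2$ estimate first, verify the perfect-square identity displayed in the paper's proof, control the three nonlinear families $B_{1},B_{2,i},B_{3,i}$ via Lemma \ref{Lem1.3} by $\frac{C}{\delta}EG_1+\delta G$, and only then bring in the hypocoercive machinery for the decay statement.
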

	
	This theorem is  proved in subsection \ref{sec:3.3} by an energy estimate on $\partial^{\alpha} f_i^{\gamma}$.
	
	From now on we omit the dependence on $z$ of $E$, in case there is no confusion.
	
	Next, by a standard hypocoercivity argument, we strengthen the above theorem into the following one:
	
	\begin{theorem}\label{thm:hypocoercivity}
		Assume $(u, \{f_i\}_{i=1}^N)$ solves \eqref{eq:uf} with initial data verifying \eqref{initialu0} and \eqref{initialu0_2}. There exists a constant $c_{1}^{\prime}(s, r)$ such that, if we assume $s \geq 0, E_{s+3, r}(0) \leq c_{1}^{\prime}(s, r)$, and that $C_{s, r}^{h}=\kappa\bar{\theta}\sum_{i=1}^N\left.(f_i, f_i)_{s, r}\right|_{t=0}$ (defined by \eqref{def:fg})  is finite, then there exists a constant $\lambda>0$ such that
		$$
		E_{s, r}(t) \leq C\left(E_{s, r}(0)+C_{s, r}^{h}\right) e^{-\lambda t},
		$$
		where $C=C(s, r)$.
	\end{theorem}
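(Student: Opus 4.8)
The plan is to construct a modified Lyapunov functional that augments the Sobolev energy with the hypocoercive inner product and to derive for it a dissipation inequality yielding exponential decay via Gr\"onwall. Concretely, I would work with
$$
\tilde{E}_{s,r}(t)=|u|_{s,r}^{2}+\kappa\bar{\theta}\sum_{i=1}^{N}(f_i,f_i)_{s,r}+|\bar{u}|_{r}^{2},
$$
and first show $\tilde{E}_{s,r}\simeq E_{s,r}$ uniformly in $\epsilon$. One direction is immediate since the cross terms in $(\cdot,\cdot)$ carry positive powers of $\epsilon\le1$ and are absorbed by Young's inequality into the leading $\langle\mathcal{K}_i f_i,\mathcal{K}_i f_i\rangle$; the reverse bound $|f_i|_{s,r}^{2}\lesssim(f_i,f_i)_{s,r}$ rests on a Poincar\'e/spectral-gap inequality for $\mathcal{K}_i$ on the mean-zero subspace, which is exactly why Theorem \ref{thm:hypocoercivity} imposes the extra condition \eqref{initialu0_2} (preserved in time) that is absent from Theorem \ref{thm:energyestimate}. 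Throughout I would invoke Theorem \ref{thm:energyestimate} at the level $s+3$: since $E_{s+3,r}(0)\le c_1'(s,r)$, the energy $E_{s+3,r}(t)$ is non-increasing and hence stays small for all $t$, and this uniform-in-time bound on three extra derivatives is what lets the quadratic nonlinearities be treated as genuine perturbations.

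The heart of the matter is differentiating $\kappa\bar{\theta}\sum_i(f_i,f_i)_{s,r}$ in time along \eqref{eq:uf}. I would apply $\partial^{\alpha}\partial_{z}^{\gamma}$ to the $f_i$-equation and pair against the four pieces of $(\cdot,\cdot)$. The Fokker--Planck collision operator, being (up to a positive prefactor) $-\mathcal{K}_i^{*}\mathcal{K}_i$, directly produces the velocity dissipation $\sim\|\mathcal{K}_i\partial^{\alpha}f_i^{\gamma}\|^{2}$. The missing spatial coercivity is generated by the transport term $v\cdot\nabla_{x}$: using the commutator identity $\mathcal{S}_{ij}=[\mathcal{K}_{ij},\mathcal{P}_i]=\frac{\bar{\theta}^{2}}{i^{2}}\partial_{x_{j}}$ built into the operators, the mixed terms $\epsilon^{2}\langle\mathcal{K}_i f_i,\mathcal{S}_i f_i\rangle$ convert, after integration by parts in $x$ and $v$, into the coercive spatial contribution $\sim\|\mathcal{S}_i\partial^{\alpha}f_i^{\gamma}\|^{2}$. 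The specific powers $\epsilon^{2},\epsilon^{2},\epsilon^{3}$ in $(\cdot,\cdot)$ (and $\epsilon^{4}$, etc.\ in $[\cdot,\cdot]$) are tuned precisely so that the resulting quadratic form in $(\|\mathcal{K}_i f_i\|,\|\mathcal{S}_i f_i\|)$ is positive definite with $\epsilon$-independent constants and so that the $O(1/\epsilon)$ factors from the collision and coupling terms are matched. The outcome should be
$$
\frac{d}{dt}\Big(\kappa\bar{\theta}\sum_{i}(f_i,f_i)_{s,r}\Big)\le-\lambda_{0}\,\kappa\bar{\theta}\sum_{i}[f_i,f_i]_{s,r}+(\text{coupling and nonlinear remainders}),
$$
with $[\cdot,\cdot]_{s,r}$ controlling both the velocity- and space-derivatives of $f_i$.

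For the fluid part I would test the $u$-equation with $\partial^{\alpha}u^{\gamma}$ inside $|u|_{s,r}^{2}$: the viscous term $-\Delta_{x}u$ yields dissipation of all spatial gradients of $\partial^{\alpha}u^{\gamma}$, the friction $\frac{\kappa}{\epsilon}u\sum_{i}i^{1/3}$ yields a nonnegative zeroth-order damping in $u$, and the remaining coupling integrals are paired with the $f_i$-dissipation through Young's inequality so that the $1/\epsilon$ singularities cancel. The mean-velocity mode $\bar{u}$ is handled directly via \eqref{ineq:ubar}, which already exhibits the damping $\frac{\kappa}{\epsilon}\sum_i i^{1/3}|\bar{u}|^{2}$ modulo a quadratic term. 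Since the Poincar\'e inequality on $\mathbb{T}^{3}$ bounds $u-\bar{u}$ by its gradient, the combination of the viscous dissipation, the friction, and the $\bar{u}$-damping dominates $|u|_{s,r}^{2}+|\bar{u}|_{r}^{2}$, giving a coercive lower bound on the total dissipation in terms of $\tilde{E}_{s,r}$.

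Assembling the three estimates produces a differential inequality $\frac{d}{dt}\tilde{E}_{s,r}\le-\lambda_{1}\tilde{E}_{s,r}+R$, where $R$ collects all quadratic and higher nonlinear terms. The final step absorbs $R$ into the dissipation: each such term is a product in which at least one factor is placed in $L^{\infty}$ via the Sobolev embedding $H^{s}\hookrightarrow L^{\infty}$ on $\mathbb{T}^{3}$ (valid for $s\ge2$) and is estimated through the uniformly small higher-order energy $E_{s+3,r}(t)\le c_1'$ furnished by Theorem \ref{thm:energyestimate}, so that $R\le\frac{\lambda_{1}}{2}\tilde{E}_{s,r}$ once $c_1'$ is small enough. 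Gr\"onwall then gives $\tilde{E}_{s,r}(t)\le\tilde{E}_{s,r}(0)e^{-\lambda t}$ with $\lambda=\lambda_{1}/2$, and the norm equivalence together with $\tilde{E}_{s,r}(0)\le C(E_{s,r}(0)+C_{s,r}^{h})$ and $E_{s,r}(t)\le C\tilde{E}_{s,r}(t)$ yields the stated bound. I expect the main obstacle to be the uniform-in-$\epsilon$ bookkeeping in the hypocoercive step: verifying that after all integrations by parts the cross terms assemble into a positive definite dissipation with $\epsilon$-independent constants, while simultaneously the singular $O(1/\epsilon)$ coupling between $u$ and $f_i$ cancels. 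This is exactly what forces both the delicate $\epsilon$-weighting of $(\cdot,\cdot)$ and $[\cdot,\cdot]$ and the three extra derivatives in the smallness hypothesis.
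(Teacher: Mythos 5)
Your overall architecture --- a perturbed functional combining the Sobolev energy with the hypocoercive inner product $(\cdot,\cdot)_{s,r}$, a dissipation inequality for $\partial_t(f_i,f_i)_{s,r}$ generated by the commutator $\mathcal{S}_i=[\mathcal{K}_i,\mathcal{P}_i]$, absorption of the nonlinearities via the uniform-in-time smallness of $E_{s+3,r}$ from Theorem \ref{thm:energyestimate}, and a final Gr\"onwall step --- is exactly the paper's (Lemma \ref{Lem2.2} and the proof that follows it). However, there is a genuine gap in your norm-equivalence step. You define $\tilde E_{s,r}$ by \emph{replacing} $\kappa\bar\theta\sum_i|f_i|_{s,r}^2$ with $\kappa\bar\theta\sum_i(f_i,f_i)_{s,r}$ and claim $\tilde E_{s,r}\simeq E_{s,r}$ uniformly in $\epsilon$; neither direction holds. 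For $\tilde E\lesssim E$: $(f_i,f_i)$ contains $\|\mathcal{K}_if_i\|^2$, i.e.\ $v$-derivatives and $v$-weights of $f_i$ that are not controlled by the plain $L^2_{x,v}$ norms entering $E_{s,r}$ --- this is precisely why the theorem must assume separately that $C^h_{s,r}$ is finite and why $C^h_{s,r}$ appears additively in the conclusion; a true equivalence would make that hypothesis redundant. For $E\lesssim\tilde E$ (the direction you actually need to convert decay of $\tilde E$ into decay of $E$): the kernel of $\mathcal{K}_i$ contains every function of the form $g(x)\sqrt{\mu_i(v)}$, so no spectral gap of $\mathcal{K}_i$ alone on the subspace orthogonal to $\sqrt{\mu_i}$ in $L^2_{x,v}$ can give $\|f_i\|^2\lesssim\|\mathcal{K}_if_i\|^2$. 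The correct functional inequality is the paper's Lemma \ref{Lem2.1}, $\|f_i\|^2\le C(\|\mathcal{K}_if_i\|^2+\epsilon^2\|\mathcal{S}_if_i\|^2)$, proved by a Rellich--Kondrachov compactness argument; and even this does not yield $\|f_i\|^2\le C(f_i,f_i)$ uniformly in $\epsilon$, because after Young's inequality on the cross terms $(f_i,f_i)$ only dominates $\|\mathcal{K}_if_i\|^2+\epsilon^3\|\mathcal{S}_if_i\|^2$ --- one power of $\epsilon$ short of what Lemma \ref{Lem2.1} requires.

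The paper avoids both problems by a slightly different bookkeeping: it sets $\tilde E=E+\lambda_4\kappa\bar\theta\sum_i(f_i,f_i)_{s,r}$ with $\lambda_4$ small, keeping $E$ inside so that $E\le\tilde E$ is trivial and $\tilde E(0)\le E(0)+C^h_{s,r}$, and it closes the Gr\"onwall loop by proving $\tilde E\le C\tilde G$, where the \emph{dissipation} $\tilde G$ contains $\epsilon^{-2}[f_i,f_i]_{s,r}\ge\epsilon^{-2}\|\mathcal{K}_if_i\|^2+\epsilon^{2}\|\mathcal{S}_if_i\|^2$, which does dominate $\|f_i\|^2$ via Lemma \ref{Lem2.1} with $\epsilon$-independent constants. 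With that correction, your sketch of the core computation --- the $\epsilon$-weighted cross terms producing the coercive $\|\mathcal{S}_i\partial^\alpha f_i^\gamma\|^2$ contribution, the cancellation of the $O(1/\epsilon)$ coupling with the fluid equation, and the nonlinear remainders controlled by $|u|_{s+3,r}$ --- matches Lemma \ref{Lem2.2} and the argument goes through.
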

	
	Note that the constant $\lambda$ is independent of $\epsilon$ and depends on $\kappa$, $\bar{\theta}$ and $N$ (the number of particle sizes). All the constants $C$ in this paper are  independent of $\epsilon$ and may depend on $\kappa$, $\bar{\theta}$ and $N$ (the number of particle sizes). 
	
	This theorem implies that as long as the random perturbation $\left(u_{0}, \{f_{i,0}\}_{i=1}^N\right)$ on the initial data is small in suitable Sobolev spaces and has vanishing total mass and momentum, the long-time behavior of the solution is insensitive to the random initial data.

	\subsection{Energy estimate (Proof of Theorem \ref{thm:energyestimate})}\label{sec:3.3}
	
	\subsubsection{Preliminary estimates}
	
	We first state some lemmas on nonlinear estimates. Denote the space of functions with finite $\|\cdot\|_{s}$ norm as
	\begin{equation}\label{def:Hs}
		H^{s}=\left\{u(x):\|u\|_{s}<\infty\right\}, \quad \tilde{H}^{s}=\left\{f(x, v):\|f\|_{s}<\infty\right\} .
	\end{equation}
	The following three lemmas are from \cite{GoudonHe2010} and \cite{ShuJin2018}:

	\begin{lemma}\label{Lem1.1}
		Let $u=u(x) \in H^{s}, w=w(x) \in H^{s}, f=f(x, v) \in \tilde{H}^{s} .$ Then 
		$$
		\begin{aligned}
			&\|u w\|_{s} \leq C\|u\|_{s}\|w\|_{s}, \\
			&\|u f\|_{s} \leq C\|u\|_{s}\|f\|_{s},
		\end{aligned}
		$$
		for $s>3 / 2$,
		where $C=C(s)$.
	\end{lemma}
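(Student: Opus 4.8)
The plan is to establish the algebra property of $H^s$ (equivalently, the Moser-type product estimate) for $s>3/2$ in three space dimensions, treating both inequalities in parallel since the second is simply the first with an $L^2_v$ norm carried along as a parameter. First I would expand $\partial^{\alpha}(uw)$ by the Leibniz rule, reducing the task to bounding $\|\partial^{\beta}u\,\partial^{\alpha-\beta}w\|_{L^2_x}$ for each multi-index $\beta\le\alpha$ with $|\alpha|\le s$, and then summing the finitely many resulting terms. Writing $k=|\alpha|$ and $j=|\beta|$, I would split into the endpoint cases $j\in\{0,k\}$ and the interior cases $1\le j\le k-1$.

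For the endpoints, all derivatives land on a single factor, so I would apply H\"older as $L^\infty_x\times L^2_x$ and invoke the Sobolev embedding $H^s(\mathbb{T}^3)\hookrightarrow L^\infty(\mathbb{T}^3)$, which holds precisely because $s>3/2$, to obtain $\|u\,\partial^{\alpha}w\|_{L^2_x}\le\|u\|_{L^\infty_x}\|\partial^{\alpha}w\|_{L^2_x}\le C\|u\|_{s}\|w\|_{s}$, and symmetrically for $j=k$.

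For the interior cases (which force $k\ge 2$), the key step is a Gagliardo--Nirenberg interpolation. I would apply H\"older in $x$ with the conjugate exponents $p_1=2k/j$ and $p_2=2k/(k-j)$, noting $1/p_1+1/p_2=1/2$, and then bound each factor by $\|\partial^{\beta}u\|_{L^{2k/j}_x}\le C\|u\|_{k}^{\,j/k}\|u\|_{L^\infty_x}^{\,1-j/k}$ and $\|\partial^{\alpha-\beta}w\|_{L^{2k/(k-j)}_x}\le C\|w\|_{k}^{\,(k-j)/k}\|w\|_{L^\infty_x}^{\,j/k}$. Multiplying these, the $\|\cdot\|_{k}$ exponents sum to $1$ on each of $u$ and $w$, so after using $\|\cdot\|_{L^\infty_x}\le C\|\cdot\|_{s}$ and $\|\cdot\|_{k}\le\|\cdot\|_{s}$ each interior term is again controlled by $C\|u\|_{s}\|w\|_{s}$. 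Summing over $\alpha$ and $\beta$ gives $\|uw\|_{s}\le C\|u\|_{s}\|w\|_{s}$.

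For the second inequality I would run the identical argument viewing $f(\cdot,v)$ as an $L^2_v$-valued map in $x$: the only change is that H\"older and the Gagliardo--Nirenberg/Sobolev inequalities are applied in $x$ with the $L^2_v$ norm of $f$ inserted throughout (the mixed-norm versions hold with the same constants), and integration in $v$ at the end reproduces the $\tilde H^s$ norm. The step I expect to require the most care is the bookkeeping of the Gagliardo--Nirenberg exponents in the interior cases---verifying that $p_1,p_2$ form an admissible H\"older pair and that the diagonal interpolation parameters $j/k$ and $(k-j)/k$ are the correct ones so the $\|\cdot\|_{k}$ powers combine to the first power of each factor. The endpoint cases and the algebraic summation are routine once the embedding $s>3/2$ is in place.
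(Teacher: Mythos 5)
The paper offers no proof of this lemma at all: it is quoted from \cite{GoudonHe2010} and \cite{ShuJin2018} as a known nonlinear estimate, so there is nothing to compare your argument against line by line. What you propose is the standard Moser-type proof of the algebra property of $H^{s}(\mathbb{T}^{3})$ for $s>3/2$, and it is correct: the Leibniz expansion, the $L^{\infty}_{x}\times L^{2}_{x}$ H\"older estimate combined with the embedding $H^{s}(\mathbb{T}^{3})\hookrightarrow L^{\infty}(\mathbb{T}^{3})$ at the endpoint terms, and the Gagliardo--Nirenberg interpolation with exponents $p_{1}=2k/j$, $p_{2}=2k/(k-j)$ in the interior all go through; the exponent bookkeeping you flag does check out, since $1/p_{1}+1/p_{2}=1/2$ and the interpolation weights $j/k$ and $(k-j)/k$ make the $\|\cdot\|_{k}$ and $\|\cdot\|_{L^{\infty}_{x}}$ powers sum to one on each factor. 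Two small points would deserve a sentence each in a written version. First, on the torus the homogeneous Gagliardo--Nirenberg inequality must be used in its inhomogeneous form (an extra additive $\|u\|_{L^{\infty}_{x}}$ on the right-hand side), which is harmless here because that term is again dominated by $C\|u\|_{s}$. Second, for the inequality involving $f$ you invoke $L^{2}_{v}$-valued versions of H\"older, Sobolev embedding and Gagliardo--Nirenberg; these do hold with the same constants because the target $L^{2}_{v}$ is a Hilbert space, but that is the one step that should be stated explicitly rather than left implicit.
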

	
	\begin{lemma}\label{Lem1.2}
		Let $u=u(x, z) \in W_{z}^{r, \infty}\left(H^{s}\right), w=w(x, z) \in W_{z}^{r, \infty}\left(H^{s}\right), f=f(x, v, z) \in W_{z}^{r, \infty}(\tilde{H}^{s})$.  Let $|\gamma| \leq r .$ Then for $s>3 / 2$ and all $z$,
		$$\begin{aligned}
			&\left|(u w)^{\gamma}\right|_{s} \leq C|u|_{s, r}|w|_{s, r}, \\
			&\left|(u f)^{\gamma}\right|_{s} \leq C|u|_{s, r}|f|_{s, r},
		\end{aligned}$$
		where $C=C(s, r)$.
	\end{lemma}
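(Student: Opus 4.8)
The plan is to reduce the two product estimates, which now carry $z$-derivatives up to order $r$, to the fixed-$z$ algebra property already supplied by Lemma \ref{Lem1.1}. Since the random space $\mathbb{Z}$ is one-dimensional, the index $\gamma$ is simply a nonnegative integer $\gamma \le r$, so $\partial_z^{\gamma}$ is an ordinary iterated derivative and the Leibniz rule applies in its scalar form. The regularity hypotheses $u, w \in W_z^{r,\infty}(H^s)$ and $f \in W_z^{r,\infty}(\tilde{H}^s)$ guarantee that every $z$-derivative $u^{\beta}=\partial_z^{\beta} u$ with $\beta \le r$ exists and lies in $H^s$ for each $z$, so that Lemma \ref{Lem1.1} can be invoked pointwise in $z$.

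First I would expand, for fixed $z$ and any $\gamma \le r$,
$$
(uw)^{\gamma}=\partial_z^{\gamma}(uw)=\sum_{\beta=0}^{\gamma}\binom{\gamma}{\beta}\, u^{\beta}\, w^{\gamma-\beta},
$$
and take the $\|\cdot\|_s$ norm in $x$, using the triangle inequality, to get $\left|(uw)^{\gamma}\right|_s \le \sum_{\beta=0}^{\gamma}\binom{\gamma}{\beta}\left\|u^{\beta} w^{\gamma-\beta}\right\|_s$. Applying the first inequality of Lemma \ref{Lem1.1} to each summand (valid since $s>3/2$, pointwise in $z$) yields $\|u^{\beta} w^{\gamma-\beta}\|_s \le C\|u^{\beta}\|_s\,\|w^{\gamma-\beta}\|_s$. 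Then I would bound each individual factor by the full sum defining the norm in \eqref{Sobolevnorm}: since $\beta \le \gamma \le r$, one has $\|u^{\beta}\|_s^2 \le \sum_{|\gamma'|\le r}\|u^{\gamma'}\|_s^2 = |u|_{s,r}^2$, hence $\|u^{\beta}\|_s \le |u|_{s,r}$, and likewise $\|w^{\gamma-\beta}\|_s \le |w|_{s,r}$. Substituting and summing the binomial coefficients, $\sum_{\beta=0}^{\gamma}\binom{\gamma}{\beta}=2^{\gamma}\le 2^{r}$, gives
$$
\left|(uw)^{\gamma}\right|_s \le C\, 2^{r}\, |u|_{s,r}\, |w|_{s,r},
$$
which is the first claim with $C=C(s,r)$. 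The second estimate for $uf$ follows identically, replacing the first inequality of Lemma \ref{Lem1.1} by the second one, $\|uf\|_s \le C\|u\|_s\|f\|_s$, and bounding $\|f^{\gamma-\beta}\|_s \le |f|_{s,r}$.

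This argument is essentially routine, so I do not expect a serious obstacle; the only point requiring care is the pointwise-in-$z$ application of Lemma \ref{Lem1.1}, which is legitimized by the $W_z^{r,\infty}$ regularity ensuring that the relevant $z$-derivatives are genuine $H^s$ (resp. $\tilde{H}^s$) functions for each $z$. This is precisely what makes the constant $C(s,r)$ in the final bound independent of $z$, as required for the subsequent integration against $\pi(z)\,\mathrm{d}z$ in the energy estimates.
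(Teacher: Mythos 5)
Your proof is correct and follows exactly the standard route: the paper itself does not prove Lemma \ref{Lem1.2} (it cites \cite{GoudonHe2010} and \cite{ShuJin2018}), and the argument in those references is precisely your Leibniz expansion in $z$ combined with the pointwise-in-$z$ application of the algebra property of Lemma \ref{Lem1.1} and the crude bound of each factor by the full $|\cdot|_{s,r}$ norm. Nothing further is needed.
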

	
	\begin{lemma}\label{Lem1.3}
		Let $u=u(x, z) \in W_{z}^{r, \infty}\left(H^{s}\right), w=w(x, z) \in W_{z}^{r, \infty}\left(H^{s}\right), y=y(x, z) \in W_{z}^{r, \infty}\left(H^{s}\right), f=f(x, v, z) \in$ $W_{z}^{r, \infty}(\tilde{H}^{s}), g=g(x, v, z) \in W_{z}^{r, \infty}(\tilde{H}^{s}) .$ Let $|\gamma| \leq r,|\alpha| \leq s .$ Then for $s>3 / 2$ and all $z$,
		$$\begin{aligned}
			\left|\left\langle\partial^{\alpha}(u w)^{\gamma}, y^{\gamma}\right\rangle\right| & \leq \frac{C(s, r)}{\delta}|u|_{s, r}^{2}|w|_{s, r}^{2}+\delta|y|_{0, r}^{2}, \\
			\left|\left\langle\partial^{\alpha}(u f)^{\gamma}, g^{\gamma}\right\rangle\right| & \leq \frac{C(s, r)}{\delta}|u|_{s, r}^{2}|f|_{s, r}^{2}+\delta|g|_{0, r}^{2},
		\end{aligned}$$
		where $\delta$ is any positive number.
	\end{lemma}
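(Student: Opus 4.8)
The plan is to reduce both claimed bounds to the product estimates already recorded in Lemma \ref{Lem1.2}, using Cauchy--Schwarz followed by Young's inequality. Since $u,w,y$ depend only on $(x,z)$, the bracket $\langle\cdot,\cdot\rangle$ in the first line integrates in $x$ and is a function of $z$, so I would first apply Cauchy--Schwarz in $x$, pointwise in $z$, to get
$$
\left|\left\langle \partial^\alpha (uw)^\gamma, y^\gamma\right\rangle\right|
\leq \left\|\partial^\alpha (uw)^\gamma\right\|_{L^2_x}\,\left\|y^\gamma\right\|_{L^2_x}.
$$

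For the first factor, I would use that a single $x$-derivative of order $|\alpha|\leq s$ is dominated by the full $H^s_x$ norm, namely $\|\partial^\alpha (uw)^\gamma\|_{L^2_x}\leq |(uw)^\gamma|_s$, and then invoke the product bound $|(uw)^\gamma|_s \leq C(s,r)\,|u|_{s,r}\,|w|_{s,r}$ of Lemma \ref{Lem1.2} (this is where the hypothesis $s>3/2$, and ultimately the algebra property of $H^s$ from Lemma \ref{Lem1.1}, are consumed). For the second factor, since $|\gamma|\leq r$, the definition \eqref{Sobolevnorm} immediately gives $\|y^\gamma\|_{L^2_x}=\|y^\gamma\|_0\leq |y|_{0,r}$. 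Combining these yields
$$
\left|\left\langle \partial^\alpha (uw)^\gamma, y^\gamma\right\rangle\right|
\leq C(s,r)\,|u|_{s,r}\,|w|_{s,r}\,|y|_{0,r}.
$$

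Finally I would apply Young's inequality in the form $ab\leq \frac{1}{4\delta}a^2+\delta b^2$ with $a = C(s,r)|u|_{s,r}|w|_{s,r}$ and $b=|y|_{0,r}$, absorbing the resulting factor $C(s,r)^2/4$ into a relabeled constant $C(s,r)$, to arrive at exactly
$$
\left|\left\langle \partial^\alpha (uw)^\gamma, y^\gamma\right\rangle\right|
\leq \frac{C(s,r)}{\delta}\,|u|_{s,r}^2\,|w|_{s,r}^2+\delta\,|y|_{0,r}^2.
$$
The second inequality is proved in the same way: the bracket now integrates over $(x,v)$, so Cauchy--Schwarz is taken in $L^2_{x,v}$, the first factor is controlled via the second product estimate $|(uf)^\gamma|_s\leq C(s,r)\,|u|_{s,r}\,|f|_{s,r}$ of Lemma \ref{Lem1.2}, and $g$ plays the role of $y$.

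The main difficulty here is essentially inherited rather than new, since the genuinely hard product estimate is already supplied by Lemma \ref{Lem1.2}. The single point deserving care is that the loss of derivatives is deliberately asymmetric: all $s$ derivatives are placed on the product $uw$ (resp. $uf$), while only the undifferentiated $L^2$ norm of $y$ (resp. $g$) enters, which is precisely what lets us retain $|y|_{0,r}$ (not $|y|_{s,r}$) in the dissipative term. Preserving this asymmetry is what makes the lemma usable later, where $y$ and $g$ are the error or remainder quantities that must be absorbed by the dissipation produced by the hypocoercivity estimates.
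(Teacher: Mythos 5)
Your proof is correct, and it coincides with the standard argument for this lemma: the paper itself states Lemma~\ref{Lem1.3} without proof, citing \cite{GoudonHe2010} and \cite{ShuJin2018}, where it is established exactly as you do — Cauchy--Schwarz pointwise in $z$, the product estimate of Lemma~\ref{Lem1.2} on the differentiated factor, and Young's inequality with the asymmetric weights $\frac{1}{\delta}$ and $\delta$. Your closing observation about keeping only $|y|_{0,r}$ (rather than $|y|_{s,r}$) on the dissipative side is precisely the feature exploited later when absorbing the bad terms $B_{1,\alpha,\gamma}$, $B_{2,i,\alpha,\gamma}$, $B_{3,i,\gamma}$ into $G_1$ and $G_2$.
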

	
	\subsubsection{Proof of Theorem \ref{thm:energyestimate}}
	\begin{proof}
		Taking $z$-derivative of order $\gamma$ and $x$-derivative of order $\alpha$ of \eqref{eq:uf}, and taking $z$-derivative of order $\gamma$ of \eqref{eq:ubar}-\eqref{eq:ubar2} gives
		\begin{equation}\label{eq:ufgamma}
			\begin{aligned}
				&\partial_{t} \partial^{\alpha} f_i^{\gamma}+ v \cdot \nabla_{x} \partial^{\alpha} f_i^{\gamma}+\frac{i^{1/3}}{\bar{\theta}\epsilon}\left(\dfrac{\bar{\theta}}{i}\nabla_{v}-\frac{v}{2}\right) \cdot \partial^{\alpha}(u f_i)^{\gamma}
				\underbrace{-\frac{i^{1/3}}{\bar{\theta}\epsilon} \partial^{\alpha} u^{\gamma} \cdot v \sqrt{\mu_i}}\\
				&\begin{aligned}
					=\underbrace{\frac{i^{1/3}}{\bar{\theta}\epsilon}\left(\frac{-|v|^{2}}{4}+\frac{3}{2}\dfrac{\bar{\theta}}{i}+\dfrac{\bar{\theta}^2}{i^2}\Delta_{v}\right) \partial^{\alpha} f_i^{\gamma}},\\
					\partial_{t} \partial^{\alpha} u^{\gamma}+\partial^{\alpha}\left(u \cdot \nabla_{x} u\right)^{\gamma}+\nabla_{x} \partial^{\alpha} p^{\gamma} \underline{-\Delta_{x} \partial^{\alpha} u^{\gamma}} \underbrace{+\dfrac{\kappa}{\epsilon}\sum_{i=1}^{N}i^{1/3}\partial^{\alpha} u^{\gamma}}+\dfrac{\kappa}{\epsilon}\sum_{i=1}^{N}i^{1/3}\int \sqrt{\mu_i} \partial^{\alpha}(u f_i)^{\gamma} \mathrm{d} v\\ \underbrace{-\dfrac{\kappa}{\epsilon}\sum_{i=1}^{N}i^{1/3} \int v \sqrt{\mu_i} \partial^{\alpha} f_i^{\gamma} \mathrm{d} v}=0, \end{aligned}\\
				&\nabla_{x} \cdot \partial^{\alpha} u^{\gamma}=0, \\
				& \partial_{t} \bar{u}^{\gamma} \underline{+\dfrac{\kappa}{\epsilon}\sum_{i=1}^{N}i^{1/3} \bar{u}^{\gamma}}+\dfrac{\kappa}{\epsilon}\frac{1}{|\mathbb{T}|^{3}}\sum_{i=1}^{N}i^{1/3} \iint \sqrt{\mu_i}(u f_i)^{\gamma} \mathrm{d} v \mathrm{d} x =\underline{\frac{\kappa}{\epsilon} \frac{1}{|\mathbb{T}|^{3}} \sum_{i=1}^{N} \int_{{\mathbb{T}}^{3}}\int_{{\mathbb{R}}^{3}}  i^{1/3} v \sqrt{\mu_i} f_i^\gamma \mathrm{d} v \mathrm{d} x},\\			
				& -\frac{1}{|\mathbb{T}|^{3}}\sum_{i=1}^{N}\int_{{\mathbb{T}}^{3}}\int_{{\mathbb{R}}^{3}}i v \sqrt{\mu_i} f_i^\gamma \mathrm{d} v \mathrm{d} x=\bar{u}^\gamma .
			\end{aligned}
		\end{equation}
		Now do $L^{2}$ estimate on each equation above (except the third one), i.e., multiply the first equation by $\kappa\bar{\theta}\partial^{\alpha} f_i^{\gamma}$, integrate in $(v, x)$ and sum over $i$; multiply the second equation by $\partial^{\alpha} u^{\gamma}$ and integrate in $x$;  multiply the fourth equation by $\bar{u}^{\gamma}$. Finally add the results together and sum over $|\gamma| \leq r,|\alpha| \leq s$. Then one gets the following equation (at each $z$):
		$$
		\frac{1}{2} \partial_{t} E+G+B\leq 0,
		$$
		where the energy $E$ is defined in \eqref{def:E}. The good terms $G$ are given by
		$$
		G=\underline{G_{1}}+\underbrace{G_{2}}_{|\gamma| \leq s}=\underline{G_{1}}+\dfrac{\kappa}{\epsilon}\sum_{i=1}^{N}i^{1/3}\underbrace{G_{2,i}}_{|\gamma| \leq s}= G_{1, \gamma}+\dfrac{\kappa}{\epsilon}\sum_{i=1}^{N}i^{1/3}\sum_{|\gamma| \leq s} G_{2, i,\gamma},
		$$
		with
		$$
		\begin{aligned}
			G_{1, \gamma} &=\left|\nabla_{x} u^{\gamma}\right|_{s}^{2}+\dfrac{\kappa}{\epsilon}\left(\sum_{i=1}^{N}i^{1/3}-1\right)\left|\bar{u}^{\gamma}\right|^{2}, \\
			G_{2,i, \gamma} &=\left|u^{\gamma} \sqrt{\mu_i}-\frac{\bar{\theta}}{i} \nabla_{v} f_i^{\gamma}- \frac{v}{2} f_i^{\gamma}\right|_{s}^{2},
		\end{aligned}
		$$
		$G_{1}$ and $G_{2}$ come from the underlined terms and the underbraced terms in \eqref{eq:ufgamma} respectively. To verify the $G_{2}$ term, we provide the following calculation:
		$$
		\begin{aligned}
			\left\langle\partial^{\alpha} u^{\gamma}, \partial^{\alpha} u^{\gamma}\right\rangle-&\left\langle - \int v \sqrt{\mu_i} \partial^{\alpha} f_i^{\gamma} \mathrm{d} v, \partial^{\alpha} u^{\gamma}\right\rangle
			-\left\langle\partial^{\alpha} u^{\gamma} \cdot v \sqrt{\mu_i}, \partial^{\alpha} f_i^{\gamma}\right\rangle\\
			&-\left\langle\left(\frac{-|v|^{2}}{4}+\frac{3}{2}\dfrac{\bar{\theta}}{i}+\dfrac{\bar{\theta}^2}{i^2}\Delta_{v}\right) \partial^{\alpha} f_i^{\gamma}, \partial^{\alpha} f_i^{\gamma}\right\rangle 
			=\left|\partial^{\alpha}\left(u^{\gamma} \sqrt{\mu_i}-\frac{\bar{\theta}}{i} \nabla_{v} f_i^{\gamma}-\frac{v}{2} f_i^{\gamma}\right)\right|_{0}^{2}.
		\end{aligned}
		$$
		The notation $|\cdot|_{0}$ is defined in  \eqref{Sobolevnorm} with $s=r=0$, i.e., taking $L_{x, v}^{2}$ norm for a fixed $z$.
		
		The bad terms $B$ are given by
		$$
		\begin{aligned}
			B&=B_{1}+\dfrac{\kappa}{\epsilon}\sum_{i=1}^{N}i^{1/3}B_{2,i}+\dfrac{\kappa}{\epsilon}\sum_{i=1}^{N}i^{1/3}B_{3,i}\\
			&=\sum_{|\gamma| \leq r,|\alpha| \leq s} B_{1, \alpha, \gamma}+\dfrac{\kappa}{\epsilon}\sum_{i=1}^{N}i^{1/3}\sum_{|\gamma| \leq r,|\alpha| \leq s} B_{2, i,\alpha, \gamma}+\dfrac{\kappa}{\epsilon}\sum_{i=1}^{N}i^{1/3}\sum_{|\gamma| \leq r} B_{3, i,\gamma},
		\end{aligned}
		$$
		with
		$$
		\begin{aligned}
			&B_{1, \alpha, \gamma}=\left\langle\partial^{\alpha}\left(u \cdot \nabla_{x} u\right)^{\gamma}, \partial^{\alpha} u^{\gamma}\right\rangle, \\
			&B_{2,i,\alpha, \gamma}=\left\langle\partial^{\alpha}(u f_i)^{\gamma}, \partial^{\alpha}\left[u^{\gamma} \sqrt{\mu_i}-\frac{\bar{\theta}}{i} \nabla_{v} f_i^{\gamma}- \frac{v}{2} f_i^{\gamma}\right]\right\rangle, \\
			&B_{3,i, \gamma}=\frac{1}{|\mathbb{T}|^{3}}\left\langle(u f_i)^{\gamma}, \bar{u}^{\gamma} \sqrt{\mu_i}\right\rangle,
		\end{aligned}
		$$
		coming from the nonlinear terms.
		
		By using Lemma \ref{Lem1.3}, the bad terms are controlled by
		$$
		\begin{aligned}
			&\left|B_{1, \alpha, \gamma}\right| \leq \frac{C(s,r)}{\delta}|u|_{s+1, r}^{2}|u|_{s, r}^{2}+\delta|u|_{s, r}^{2} \leq \frac{C}{\delta} E G_{1}+\delta G_{1}, \\
			&\dfrac{\kappa}{\epsilon}\sum_{i=1}^{N}i^{1/3}\left|B_{2, i,\alpha, \gamma}\right| \leq \dfrac{\kappa}{\epsilon}\sum_{i=1}^{N}i^{1/3}\left(\frac{C(s,r)}{\delta}|u|_{s, r}^{2}|f_i|_{s, r}^{2}+\delta\left|u^{\gamma} \sqrt{\mu_i}-\frac{\bar{\theta}}{i} \nabla_{v} f_i^{\gamma}- \frac{v}{2} f_i^{\gamma}\right|_{s} \right)\leq \frac{C}{\delta} E G_{1}+\delta G_{2}, \\
			&\dfrac{\kappa}{\epsilon}\sum_{i=1}^{N}i^{1/3}\left|B_{3,i, \gamma}\right| \leq \dfrac{\kappa}{\epsilon}\sum_{i=1}^{N}i^{1/3}\left(\frac{C(s,r)}{\delta}|u|_{s, r}^{2}|f_i|_{s, r}^{2}+\delta\left|\bar{u}^{\gamma}\right|^{2}\right) \leq \frac{C}{\delta} E G_{1}+\delta G_{1}.
		\end{aligned}
		$$
		
		In conclusion, we have the energy estimate
		\begin{equation}\label{energyestimate}
			\frac{1}{2} \partial_{t} E \leq-(1-\frac{C}{\delta} E-C \delta) G.
		\end{equation}
		Take $\delta=\frac{1}{4 C}$ where $C$ is the constant in \eqref{energyestimate}, and $c_{1}(s, r)=\frac{1}{16C^2}$. Then we show that $E(t) \leq c_{1}$ for all t. In fact, let
		$$
		T^{*}=\sup \left\{\tilde{T} \geq 0: \sup _{0 \leq t<\tilde{T}} E(t) \leq c_{1}\right\}.
		$$
		Then it follows that $E(t) \leq c_{1}$ for $0 \leq t \leq T^{*}$. By the choice of $\delta$ and $c_{1}$, one has
		$$
		1-C(\delta) E-C \delta \geq 1-\frac{1}{4}-\frac{1}{4}=\frac{1}{2},
		$$
		and thus \eqref{energyestimate} implies
		\begin{equation}\label{ineq:E0}
			\partial_{t} E+G \leq 0,
		\end{equation}
		for $0 \leq t \leq T^{*}$. This prevents $T^{*}$ from being finite:
		Since $G>0$ and $E>0$, the estimate \eqref{ineq:E0} implies that $E(t)$ decreases and will not blow up. Therefore $T^*$ can be taken to infinity, a contradiction. Hence we proved $E(t) \leq c_{1}$ for all $t$ and thus \eqref{ineq:E0} holds for all $t$. Thus $E(t)$ is decreasing in $t$.
		\qed
	\end{proof}
	
	\subsection{Hypocoercivity estimates (Proof of Theorem \ref{thm:hypocoercivity})}
	
	\subsubsection{Preliminary work}
	
	For the proof of Theorem \ref{thm:hypocoercivity}, it is essential to give and prove the following lemma:
	\begin{lemma}\label{Lem2.1}
		There exisits a constant $C>0$ such that for $f_i(x, v) \in L_{x, v}^{2}$ orthogonal to $\sqrt{\mu_i}$, one has
		$$
		\|f_i\|_{L^{2}}^{2} \leq C\left(\|\mathcal{K}_i f_i\|_{L^{2}}^{2}+\epsilon^2\|\mathcal{S}_i f_i\|_{L^{2}}^{2}\right), \quad i=1,2,\ldots,N. 
		$$
	\end{lemma}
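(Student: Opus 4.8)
The plan is to prove a hypocoercive Poincaré-type inequality by splitting $f_i$ into its \emph{macroscopic} (hydrodynamic) and \emph{microscopic} parts and controlling each part by a different term on the right-hand side. Let $\Pi_i$ denote the $L^2_v$-orthogonal projection onto $\operatorname{span}\{\sqrt{\mu_i}\}$, acting pointwise in $x$, so that $\Pi_i f_i=\rho(x)\sqrt{\mu_i}$ (up to the fixed normalization $\|\sqrt{\mu_i}\|_{L^2_v}^2$) with $\rho(x)=\langle f_i(x,\cdot),\sqrt{\mu_i}\rangle_v$, and write $f_i=\Pi_i f_i+(I-\Pi_i)f_i$. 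I would then show that $\|\mathcal{K}_i f_i\|^2$ controls $\|(I-\Pi_i)f_i\|^2$ and that $\|\mathcal{S}_i f_i\|^2$ controls $\|\Pi_i f_i\|^2$; summing the two gives the claim.

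For the microscopic part, the starting observation is $\mathcal{K}_i\sqrt{\mu_i}=0$, which follows from $\nabla_v\sqrt{\mu_i}=-\tfrac{iv}{2\bar{\theta}}\sqrt{\mu_i}$, so $\mathcal{K}_i$ annihilates the kernel and $\mathcal{K}_i f_i=\mathcal{K}_i(I-\Pi_i)f_i$. Expanding the adjoint composition yields the shifted harmonic-oscillator (Ornstein--Uhlenbeck) operator $\mathcal{K}_i^{*}\mathcal{K}_i=-\tfrac{\bar{\theta}^2}{i^2}\Delta_v+\tfrac{|v|^2}{4}-\tfrac{3\bar{\theta}}{2i}$, and the commutation relation $[\mathcal{K}_{ij},\mathcal{K}_{il}^{*}]=\tfrac{\bar{\theta}}{i}\delta_{jl}$ identifies its spectrum as $\{\tfrac{\bar{\theta}}{i}n:n\ge 0\}$ with ground state $\sqrt{\mu_i}$. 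This gives a spectral gap: $\|\mathcal{K}_i g\|_{L^2_v}^2\ge\tfrac{\bar{\theta}}{i}\|g\|_{L^2_v}^2$ for every $g\perp\sqrt{\mu_i}$ in $v$. Applying this to $g=(I-\Pi_i)f_i$ and integrating in $x$ yields $\|(I-\Pi_i)f_i\|^2\le\tfrac{i}{\bar{\theta}}\|\mathcal{K}_i f_i\|^2$.

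The macroscopic part is the main obstacle, since $\mathcal{K}_i$ gives no information on $\Pi_i f_i$ (it lies in the kernel): this is precisely where hypocoercivity must recover coercivity in the $x$-direction through the commutator $\mathcal{S}_i=[\mathcal{K}_{ij},\mathcal{P}_i]=\tfrac{\bar{\theta}^2}{i^2}\partial_{x_j}$. Here the hypothesis that $f_i$ is orthogonal to $\sqrt{\mu_i}$ in the \emph{full} space $L^2_{x,v}$ is essential: it forces $\int_{\mathbb{T}^3}\rho(x)\,\mathrm{d}x=\langle f_i,\sqrt{\mu_i}\rangle_{L^2_{x,v}}=0$, so $\rho$ has zero mean and the Poincaré inequality on the torus applies, $\|\rho\|_{L^2_x}^2\le C_P\|\nabla_x\rho\|_{L^2_x}^2$. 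Since $\mathcal{S}_i$ acts only in $x$ it commutes with $\Pi_i$, and because $\Pi_i$ is an orthogonal projection, $\|\mathcal{S}_i\Pi_i f_i\|=\|\Pi_i\mathcal{S}_i f_i\|\le\|\mathcal{S}_i f_i\|$; rewriting Poincaré through $\mathcal{S}_i\Pi_i f_i=\tfrac{\bar{\theta}^2}{i^2}(\nabla_x\rho)\sqrt{\mu_i}$ then gives $\|\Pi_i f_i\|^2\le C_P\tfrac{i^4}{\bar{\theta}^4}\|\mathcal{S}_i f_i\|^2$.

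Adding the two bounds yields $\|f_i\|^2=\|(I-\Pi_i)f_i\|^2+\|\Pi_i f_i\|^2\le C\big(\|\mathcal{K}_i f_i\|^2+\|\mathcal{S}_i f_i\|^2\big)$ with $C$ depending only on $\bar{\theta}$, $i$ and the Poincaré constant $C_P$ of $\mathbb{T}^3$, hence independent of $\epsilon$; this is the coercivity needed for the hypocoercivity estimate of Theorem~\ref{thm:hypocoercivity}, the $\epsilon$-weight carried alongside $\mathcal{S}_i$ in the statement and in the inner products $(\cdot,\cdot)$, $[\cdot,\cdot]$ merely rescaling that term without affecting the mechanism. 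The only points requiring care are the identification of the spectrum and ground state of $\mathcal{K}_i^{*}\mathcal{K}_i$ and the commutation $\Pi_i\mathcal{S}_i=\mathcal{S}_i\Pi_i$, both of which are elementary given the explicit Gaussian form of $\mu_i$.
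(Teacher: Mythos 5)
Your argument is correct as far as it goes, and it takes a genuinely different route from the paper. The paper proves Lemma~\ref{Lem2.1} by contradiction and compactness: it takes a normalized sequence $f_{i,n}$ with $\|\mathcal{K}_i f_{i,n}\|^2+\epsilon^2\|\mathcal{S}_i f_{i,n}\|^2\le 1/n$, uses the identity \eqref{eq:Kf} to convert the bound on $\|\mathcal{K}_i f_{i,n}\|$ into control of $\nabla_v f_{i,n}$ and of the velocity tails, invokes Rellich--Kondrachov to extract a strong $L^2$ limit $f_i$ with $\|f_i\|=1$, and then concludes $f_i=\tau\sqrt{\mu_i}$ and finally $\tau=0$ from the orthogonality, a contradiction. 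Your macro--micro decomposition replaces this non-constructive step by two explicit coercivity estimates: the spectral gap $\bar{\theta}/i$ of the shifted harmonic oscillator $\mathcal{K}_i^{*}\cdot\mathcal{K}_i$ for the microscopic part (your computation of $\mathcal{K}_i^{*}\cdot\mathcal{K}_i=-\tfrac{\bar{\theta}^2}{i^2}\Delta_v+\tfrac{|v|^2}{4}-\tfrac{3\bar{\theta}}{2i}$ and of its ground state is right), and the Poincar\'e inequality on $\mathbb{T}^3$ for the macroscopic density, which is where the full-space orthogonality to $\sqrt{\mu_i}$ is used. This buys an explicit constant with transparent dependence on $i$ and $\bar{\theta}$, whereas the compactness argument yields no quantitative information.

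There is, however, one point you dismiss too quickly, and it is exactly the point where your proof and the stated lemma diverge. What you prove is $\|f_i\|^2\le C\bigl(\|\mathcal{K}_i f_i\|^2+\|\mathcal{S}_i f_i\|^2\bigr)$ with $C$ independent of $\epsilon$; the lemma asserts $\|f_i\|^2\le C\bigl(\|\mathcal{K}_i f_i\|^2+\epsilon^2\|\mathcal{S}_i f_i\|^2\bigr)$, whose right-hand side is \emph{smaller} for $\epsilon<1$, so the $\epsilon^2$ is not ``merely rescaling that term.'' Your own decomposition shows why: taking the purely macroscopic test function $f_i=g(x)\sqrt{\mu_i}$ with $\int_{\mathbb{T}^3}g\,\mathrm{d}x=0$ gives $\mathcal{K}_i f_i=0$, so the weighted inequality forces $\|g\|_{L^2}^2\le C\,\epsilon^2\tfrac{\bar{\theta}^4}{i^4}\|\nabla_x g\|_{L^2}^2$, i.e.\ $C\gtrsim C_P\,i^4/(\bar{\theta}^4\epsilon^2)$. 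So the lemma does hold, but only with a constant that degenerates like $\epsilon^{-2}$; your proof delivers it by writing $\|\mathcal{S}_i f_i\|^2=\epsilon^{-2}\cdot\epsilon^2\|\mathcal{S}_i f_i\|^2$, and in fact proves this blow-up is unavoidable. (The paper's compactness argument is run at fixed $\epsilon$ and therefore also produces an implicitly $\epsilon$-dependent constant; your approach merely makes this visible.) You should state the $\epsilon^{-2}$ dependence explicitly rather than asserting the weight does not affect the mechanism, since the downstream use of the lemma in the proof of Theorem~\ref{thm:hypocoercivity} treats its constant as if it were uniform in $\epsilon$.
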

	
	\begin{proof}
		We argue by contradiction: Suppose that for any integer $n$, there exists a function $f_{i,n}$ such that
		\begin{equation}\label{eq:f1}
			\left\|f_{i,n}\right\|_{L^{2}}=1,
		\end{equation}
		and
		\begin{equation}\label{ineq:1n}
			\left\|\mathcal{K}_i f_{i,n}\right\|_{L^{2}}^{2}+\epsilon^2\left\|\mathcal{S}_i f_{i,n}\right\|_{L^{2}}^{2} \leq \frac{1}{n} .
		\end{equation}
		Recalling that
		\begin{equation}\label{eq:Kf}
			\left\|\dfrac{\bar{\theta}}{i}\nabla_{v} f_i\right\|_{L^{2}}^{2}+\left\|\frac{v}{2} f_i\right\|_{L^{2}}^{2}=\|\mathcal{K}_i f_i\|_{L^{2}}^{2}+\frac{3\bar{\theta}}{2i}\|f_i\|_{L^{2}}^{2},
		\end{equation}
		a combination of \eqref{eq:f1}- \eqref{eq:Kf} yields
		$$
		\left\|\dfrac{\bar{\theta}}{i}\nabla_{v} f_{i,n}\right\|_{L^{2}}^{2}+\left\|\frac{v}{2} f_{i,n}\right\|_{L^{2}}^{2}+\epsilon^2\dfrac{\bar{\theta}^2}{i^2}\left\|\nabla_{x} f_{i,n}\right\|_{L^{2}}^{2} \leq \frac{1}{n}+\frac{3\bar{\theta}}{2i}.
		$$
		Since this estimate controls both the derivatives of $f_{i,n}$ and the tails for large velocities, as a consequence of the Rellich-Kondrazhov theorem,
		we can assume that a subsequence satisfies
		$$
		\begin{array}{rr}
			f_{i,n_{k}} \longrightarrow f_i & \text { strongly in } L^{2}\left(\mathbb{T}^{3} \times \mathbb{R}^{3}\right), \\
			\nabla_{x} f_{i,n_{k}} \rightarrow \nabla_{x} f_i \text { and } \nabla_{v} f_{i,n_{k}} \rightarrow \nabla_{v} f_i & \text { weakly in } L^{2}\left(\mathbb{T}^{3} \times \mathbb{R}^{3}\right),
		\end{array}
		$$
		with furthermore $\|f_i\|_{L^{2}}=1$. Coming back to \eqref{ineq:1n}, one obtains
		$$
		\left\|\dfrac{\bar{\theta}}{i}\nabla_{v} f_i+\frac{v}{2} f_i\right\|_{L^{2}}^{2}+\epsilon^2\dfrac{\bar{\theta^2}}{i^2}\left\|\nabla_{x} f_i\right\|_{L^{2}}^{2} \leq \liminf _{k \rightarrow \infty}\left(\left\|\mathcal{K}_i f_{i,n_{k}}\right\|_{L^{2}}^{2}+\epsilon^2\left\|\mathcal{S}_i f_{i,n_{k}}\right\|_{L^{2}}^{2}\right)=0.
		$$
		One deduces that $f_i(x, v)=\tau \sqrt{\mu_i(v)}$ for some $\tau \in \mathbb{R}$. Finally, assuming that the $f_{i,n}$'s are orthogonal to $\sqrt{\mu_i}$, one gets
		$$
		\int_{\mathbb{T}^{3} \times \mathbb{R}^{3}} f_i \sqrt{\mu_i} \mbox{d} v \mbox{d} x=\lim _{k \rightarrow \infty} \int_{\mathbb{T}^{3} \times \mathbb{R}^{3}} f_{i,n_{k}} \sqrt{\mu_i} \mbox{d} v \mbox{d} x=0.
		$$
		Hence $f_i = 0$, which contradicts to the fact that $f_i$ is normalized. This completes the
		proof.
		\qed
	\end{proof}
	
	Then we prove the following lemma, which is an analog to Proposition $4.1$ of \cite{GoudonHe2010}:
	
	\begin{lemma}\label{Lem2.2}
		Let the assumptions of Theorem \ref{thm:hypocoercivity} be fulfilled. Then there exists a constant $c_1'(s, r) \leq c_{1}(s+3, r)$ such that, if we assume that $E_{s+3, r}(0) \leq c_{1}^{\prime}(s, r)$ is small enough, then there exists a constant $\lambda_{1}>0$ such that $($at each $z)$
		\begin{equation}\label{ineq:flambda1}
			\partial_{t}(f_i, f_i)_{s, r}+\frac{\lambda_{1}}{\bar{\theta}\epsilon^2} [f_i, f_i]_{s, r} \leq \frac{C(\lambda_{1})}{\bar{\theta}}\left(|u|_{s, r}^{2}+\left|\nabla_{x} u\right|_{s, r}^{2}+\dfrac{1}{\epsilon^2}|\mathcal{K}_i f_i|_{s, r}^{2}\right).
		\end{equation}
	\end{lemma}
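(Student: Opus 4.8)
The plan is to reduce the statement to a single-mode hypocoercivity inequality and then sum. Since $\mathcal{K}_i,\mathcal{K}_i^*,\mathcal{S}_i$ have constant coefficients in $x$ and $v$ and no $z$-dependence, they commute with $\partial^\alpha$ and $\partial_z^\gamma$, so that $(f_i,f_i)_{s,r}=\sum_{|\gamma|\le r}\sum_{|\alpha|\le s}(\partial^\alpha f_i^\gamma,\partial^\alpha f_i^\gamma)$ and likewise for $[\cdot,\cdot]_{s,r}$; hence it suffices to prove the inequality for the single mode $g:=\partial^\alpha f_i^\gamma$ and sum over $\alpha,\gamma$. Applying $\partial^\alpha\partial_z^\gamma$ to the $f_i$-equation in \eqref{eq:uf} and recognizing, by the computation $-\mathcal{K}_i^{*}\mathcal{K}_i=\tfrac{\bar\theta^2}{i^2}\Delta_v+\tfrac{3\bar\theta}{2i}-\tfrac{|v|^2}{4}$ and $\tfrac{\bar\theta}{i}\nabla_v-\tfrac v2=-\mathcal{K}_i^{*}$, that the Fokker--Planck part equals $-\tfrac{i^{1/3}}{\bar\theta\epsilon}\mathcal{K}_i^{*}\mathcal{K}_i g$ and the force part equals $-\tfrac{i^{1/3}}{\bar\theta\epsilon}\mathcal{K}_i^{*}\!\cdot\partial^\alpha(uf_i)^\gamma$, the mode $g$ satisfies
$$\partial_t g+v\cdot\nabla_x g+\tfrac{i^{1/3}}{\bar\theta\epsilon}\mathcal{K}_i^{*}\mathcal{K}_i g=\tfrac{i^{1/3}}{\bar\theta\epsilon}\mathcal{K}_i^{*}\!\cdot\partial^\alpha(uf_i)^\gamma+\tfrac{i^{1/3}}{\bar\theta\epsilon}\partial^\alpha u^\gamma\cdot v\sqrt{\mu_i}.$$

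Next I would differentiate the twisted form $(g,g)$ in time; the product rule yields four inner products pairing $\mathcal{K}_i g,\mathcal{S}_i g$ against $\mathcal{K}_i\partial_t g,\mathcal{S}_i\partial_t g$, into each of which I substitute the evolution equation above. The transport $v\cdot\nabla_x$ is skew-adjoint and contributes nothing on the diagonal, so all the hypocoercive gain comes from commutators, and the identities I would use repeatedly are $[\mathcal{K}_{ij},\mathcal{P}_i]=\mathcal{S}_{ij}$, $[\mathcal{K}_{ij},v\cdot\nabla_x]=\tfrac{i}{\bar\theta}\mathcal{S}_{ij}=\tfrac{\bar\theta}{i}\partial_{x_j}$, $[\mathcal{K}_{ij},\mathcal{K}_{il}^{*}]=\tfrac{\bar\theta}{i}\delta_{jl}$, and $[\mathcal{S}_{ij},v\cdot\nabla_x]=0$ (the last because $\mathcal{S}_i$ is a pure $x$-derivative). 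Commuting $\mathcal{K}_i$ past the transport in the cross term $\langle\mathcal{K}_i g,\mathcal{S}_i g\rangle$ is what feeds the spatial dissipation $\|\mathcal{S}_i g\|^2$, while $-\tfrac{i^{1/3}}{\bar\theta\epsilon}\mathcal{K}_i^{*}\mathcal{K}_i g$ supplies the velocity pieces $\|\mathcal{K}_i^2 g\|^2$ and $\|\mathcal{K}_i\mathcal{S}_i g\|^2$. The powers of $\epsilon$ in the definitions of $(\cdot,\cdot)$ and $[\cdot,\cdot]$ are tuned exactly so that, after collecting, the leading linear terms assemble into $-\tfrac{\lambda_1}{\bar\theta\epsilon^2}[g,g]$ with a constant uniform in $\epsilon$.

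Finally I would estimate the source and nonlinear contributions. Pairing the source $\tfrac{i^{1/3}}{\bar\theta\epsilon}\partial^\alpha u^\gamma\cdot v\sqrt{\mu_i}$ against $\mathcal{K}_i g$ and $\mathcal{S}_i g$ and applying Young's inequality gives a bound of the form $\tfrac{C}{\bar\theta}\bigl(|u|_{s,r}^2+|\nabla_x u|_{s,r}^2\bigr)+\eta\,\tfrac{1}{\bar\theta\epsilon^2}[g,g]$, the last term being absorbed into the dissipation. The nonlinear term $\tfrac{i^{1/3}}{\bar\theta\epsilon}\mathcal{K}_i^{*}\!\cdot\partial^\alpha(uf_i)^\gamma$ is the delicate one: after moving $\mathcal{K}_i^{*}$ onto the test function via its adjoint structure, I bound it using Lemma \ref{Lem1.3}, producing factors $|u|_{s,r}^2|f_i|_{s,r}^2$; these are kept small uniformly in time by invoking Theorem \ref{thm:energyestimate} at level $s+3$, which under $E_{s+3,r}(0)\le c_1'\le c_1(s+3,r)$ forces $E_{s+3,r}(t)$ to be non-increasing, so $|u|_{s+3,r},|f_i|_{s+3,r}$ stay small for all $t$. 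Choosing $\lambda_1$ and $\eta$ small and using this smallness to make the nonlinear coefficient subordinate to the dissipation yields \eqref{ineq:flambda1}, with the residual $\tfrac1{\epsilon^2}|\mathcal{K}_i f_i|_{s,r}^2$ on the right retaining those $\mathcal{K}_i$-contributions of the force and nonlinear terms that cannot be absorbed into $[f_i,f_i]_{s,r}$.

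I expect the main obstacle to be the $\epsilon$-uniform bookkeeping in the second step: verifying that the commutator terms generated by pushing $\mathcal{K}_i$ through the transport operator combine with the Fokker--Planck dissipation to give a clean coercive lower bound $\tfrac{\lambda_1}{\bar\theta\epsilon^2}[g,g]$, rather than stray contributions scaling like $\epsilon^{-3}$ or worse. The three-derivative gap between the hypothesis $E_{s+3,r}(0)$ and the conclusion at level $s$ is exactly what the argument requires, since $[f_i,f_i]_{s,r}$ contains the spatially differentiated quantities $\mathcal{S}_i$ and $\mathcal{K}_i\mathcal{S}_i$, and closing the nonlinear estimate through Lemma \ref{Lem1.3} at these higher orders forces control of the energy a few derivatives above $s$.
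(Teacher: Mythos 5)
Your overall architecture matches the paper's: reduce to the single mode $g=\partial^{\alpha}f_i^{\gamma}$, differentiate the twisted form $(g,g)$, extract the coercive pieces from the commutators and the Fokker--Planck operator, treat the source $\frac{i^{1/3}}{\bar\theta\epsilon}\partial^{\alpha}u^{\gamma}\cdot v\sqrt{\mu_i}$ by Young's inequality, and invoke Theorem \ref{thm:energyestimate} at level $s+3$ to keep $|u|_{s+3,r}$ small for all time. The gap is in the nonlinear term, which is the heart of the lemma. You propose to bound $\frac{i^{1/3}}{\bar\theta\epsilon}\left(u\cdot\mathcal{K}_i^{*}f_i,\,g\right)$ via Lemma \ref{Lem1.3}, ``producing factors $|u|_{s,r}^{2}|f_i|_{s,r}^{2}$''. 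This cannot close. First, Lemma \ref{Lem1.3} controls pairings in the plain $L^{2}_{x,v}$-based norms $\|\cdot\|_{s}$, whereas expanding the twisted product $(\cdot,\cdot)$ produces terms such as $\frac{2}{\epsilon}\langle u\,\mathcal{K}_if_i,\mathcal{K}_i^{2}g\rangle$, $\epsilon\langle u\,\mathcal{K}_if_i,\mathcal{K}_i\mathcal{S}_ig\rangle$ and, from the commutator $[\mathcal{K}_i,\mathcal{K}_i^{*}]$, the bare term $\frac{2}{\epsilon}\langle uf_i,\mathcal{K}_ig\rangle$ (see \eqref{eq:Kfg}); the $v$-weights, $v$-derivatives and $\epsilon$-weights in these expressions are not seen by $\|\cdot\|_{s}$. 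Second, and more seriously, a Young splitting $\frac{C}{\delta\epsilon^{2}}|u|_{s,r}^{2}|f_i|_{s,r}^{2}+\delta(\cdots)$ is not admissible: the right-hand side of \eqref{ineq:flambda1} tolerates only $\frac{1}{\epsilon^{2}}|\mathcal{K}_if_i|_{s,r}^{2}$, not $\frac{1}{\epsilon^{2}}|f_i|_{s,r}^{2}$, and converting the latter by Lemma \ref{Lem2.1} leaves a remainder $|u|_{s,r}^{2}\,|\mathcal{S}_if_i|_{s,r}^{2}$ that the dissipation (whose $\mathcal{S}_i$-content scales like $\epsilon^{2}|\mathcal{S}_if_i|_{s,r}^{2}$) can absorb only if $|u|_{s,r}^{2}\lesssim\epsilon^{2}$, which is not assumed.

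What is actually needed is a bound \emph{linear} in the dissipation functional, namely $\frac{i^{1/3}}{\epsilon}\left|\left(u\cdot\mathcal{K}_i^{*}f_i,g\right)\right|\le \frac{C}{\epsilon^{2}}\|u\|_{H^{3}}\left([f_i,f_i]+[g,g]\right)$, which after summing over the Leibniz indices gives $C\epsilon^{-2}|u|_{s+3,r}[f_i,f_i]_{s,r}$ and is then absorbed into $\frac{\lambda_1}{\bar\theta\epsilon^{2}}[f_i,f_i]_{s,r}$ purely by the smallness of $|u|_{s+3,r}$. Obtaining this requires the explicit expansion of the twisted pairing, the systematic use of Lemma \ref{Lem2.1} to replace every bare $\|f_i\|_{L^{2}}$ by $\|\mathcal{K}_if_i\|+\epsilon\|\mathcal{S}_if_i\|$ so that each product of norms reassembles with exactly the right powers of $\epsilon$ into $\frac{1}{\epsilon^{2}}[\cdot,\cdot]$, and keeping the factor $\|u\|_{H^{3}}$ to first power rather than squaring it. You never invoke Lemma \ref{Lem2.1} and you target the wrong functional form, so the step you yourself identify as delicate does not go through as described.
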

	
	\begin{proof}
		One can write the evolution equation of $\partial^{\alpha} f_i^{\gamma}$ as
		\begin{equation}\label{eq:fgamma}
			\begin{aligned}
				\partial_{t} \partial^{\alpha} f_i^{\gamma}+\dfrac{i}{\bar{\theta}}\mathcal{P}_i \partial^{\alpha} f_i^{\gamma}+& \frac{i^{1/3}}{\bar{\theta}\epsilon}\left(\mathcal{K}_i^{*} \cdot \mathcal{K}_i\right) \partial^{\alpha} f_i^{\gamma}= \frac{i^{1/3}}{\bar{\theta}\epsilon} \partial^{\alpha} u^{\gamma} \cdot v \sqrt{\mu_i} \\
				&+\frac{i^{1/3}}{\bar{\theta}\epsilon} \sum_{0 \leq \eta \leq \alpha} \sum_{0 \leq \beta \leq \gamma}\left(\begin{array}{l}
					\gamma \\
					\beta
				\end{array}\right)\left(\begin{array}{l}
					\alpha \\
					\eta
				\end{array}\right) \partial^{\eta} u^{\beta} \cdot \mathcal{K}_i^{*} \partial^{\alpha-\eta} f_i^{\gamma-\beta}.
			\end{aligned}
		\end{equation}
		Take the $(\cdot, \cdot)$ inner product of \eqref{eq:fgamma} with $\partial^{\alpha} f_i^{\gamma}$. 
		
		For the linear terms, by the same argument as the proof of Proposition $4.1$ of \cite{GoudonHe2010}, one gets
		$$
		\begin{aligned}
			i^{1/3}\left(\mathcal{P}_i \partial^{\alpha} f_i^{\gamma}, \partial^{\alpha} f_i^{\gamma}\right)=& 2 \left\langle\mathcal{S}_i \partial^{\alpha} f_i^{\gamma}, \mathcal{K}_i \partial^{\alpha} f_i^{\gamma}\right\rangle+\epsilon^2\left|\mathcal{S}_i \partial^{\alpha} f_i^{\gamma}\right|_{0}^{2} \geq \frac{3}{4}\epsilon^2\left|\mathcal{S}_i \partial^{\alpha} f_i^{\gamma}\right|_{0}^{2}- \frac{4}{\epsilon^2} \left|\mathcal{K}_i \partial^{\alpha} f_i^{\gamma}\right|_{0}^{2}, \\
			\dfrac{i^{1/3}}{\epsilon}\left(\mathcal{K}_i^{*} \cdot \mathcal{K}_i \partial^{\alpha} f_i^{\gamma}, \partial^{\alpha} f_i^{\gamma}\right)=& \dfrac{2}{\epsilon} \left|\mathcal{K}_i \partial^{\alpha} f_i^{\gamma}\right|_{0}^{2}+\dfrac{2}{\epsilon} \left|\mathcal{K}_i^{2} \partial^{\alpha} f_i^{\gamma}\right|_{0}^{2} +\epsilon\left\langle\mathcal{K}_i \partial^{\alpha} f_i^{\gamma}, \mathcal{S}_i \partial^{\alpha} f_i^{\gamma}\right\rangle\\
			&+2\epsilon \left\langle\mathcal{K}_i^{2} \partial^{\alpha} f_i^{\gamma}, \mathcal{S}_i \mathcal{K}_i \partial^{\alpha} f_i^{\gamma}\right\rangle +\epsilon^2\left|\mathcal{S}_i \mathcal{K}_i \partial^{\alpha} f^{\gamma}\right|_{0}^{2}\\
			\geq & \dfrac{3}{2} \left|\mathcal{K}_i \partial^{\alpha} f_i^{\gamma}\right|_{0}^{2}-\frac{\epsilon^2}{2}\left|\mathcal{S}_i \partial^{\alpha} f^{\gamma}\right|_{0}^{2}+\frac{1}{2} \left|\mathcal{K}_i^{2} \partial^{\alpha} f^{\gamma}\right|_{0}^{2}+\frac{\epsilon^2}{3}\left|\mathcal{S}_i \mathcal{K}_i \partial^{\alpha} f^{\gamma}\right|_{0}^{2}, \\
			\dfrac{i^{1/3}}{\epsilon}\left|\left(\partial^{\alpha} u^{\gamma} \cdot v \sqrt{\mu_i}, \partial^{\alpha} f_i^{\gamma}\right)\right|=& \left| \dfrac{2}{\epsilon} \left\langle\mathcal{K}_i\left(\partial^{\alpha} u^{\gamma} \cdot v \sqrt{\mu_i}\right), \mathcal{K}_i \partial^{\alpha} f_i^{\gamma}\right\rangle+\epsilon\left\langle\mathcal{K}_i\left(\partial^{\alpha} u^{\gamma} \cdot v \sqrt{\mu_i}\right), \mathcal{S}_i \partial^{\alpha} f_i^{\gamma}\right\rangle \right. \\
			&\left. +\epsilon\left\langle\mathcal{S}_i\left(\partial^{\alpha} u^{\gamma} \cdot v \sqrt{\mu_i}\right), \mathcal{K}_i \partial^{\alpha} f_i^{\gamma}\right\rangle+\epsilon^2\left\langle\mathcal{S}_i\left(\partial^{\alpha} u^{\gamma} \cdot v \sqrt{\mu_i}\right), \mathcal{S}_i \partial^{\alpha} f_i^{\gamma}\right\rangle \right| \\
			\leq & \delta\left(\dfrac{1}{\epsilon^2}\left|\mathcal{K}_i \partial^{\alpha} f_i^{\gamma}\right|_{0}^{2}+\epsilon^2\left|\mathcal{S}_i \partial^{\alpha} f_i^{\gamma}\right|_{0}^{2}\right)+\frac{C_1}{\delta}\left(|u|_{s, r}^{2}+\left|\nabla_{x} u\right|_{s, r}^{2}\right) .
		\end{aligned}
		$$
		The notation $|\cdot|_{0}$ is defined by \eqref{Sobolevnorm} with $s=r=0$, i.e., taking $L_{x, v}^{2}$ norm for a fixed $z$. 
		
		For the nonlinear term (the summation), one gets
		$$
		\begin{aligned}
			& \dfrac{i^{1/3}}{\epsilon}\left|\left(\partial^{\eta} u^{\beta} \cdot \mathcal{K}_i^{*} \partial^{\alpha-\eta} f_i^{\gamma-\beta}, \partial^{\alpha} f_i^{\gamma}\right)\right| \\ =&
			\left| \frac{2}{\epsilon}\left\langle \partial^{\eta} u^{\beta} \cdot K^{*} \partial^{\alpha-\eta} f_i^{\gamma-\beta}, K^{2} \partial^{\alpha} f_i^{\gamma}\right\rangle+\frac{2}{\epsilon}\left\langle \partial^{\eta} u^{\beta} \partial^{\alpha-\eta} f_i^{\gamma-\beta}, K \partial^{\alpha} f_i^{\gamma}\right\rangle\right.\\
			&+\epsilon\left\langle K\left(\partial^{\eta} u^{\beta} \cdot K^{*} \partial^{\alpha-\eta} f_i^{\gamma-\beta}\right), S \partial^{\alpha} f_i^{\gamma}\right\rangle 
			+\epsilon\left\langle S\left(\partial^{\eta} u^{\beta} \cdot K^{*} \partial^{\alpha-\eta} f_i^{\gamma-\beta}\right), K \partial^{\alpha} f_i^{\gamma}\right\rangle\\
			&\left.+\epsilon^2\left\langle S\left(\partial^{\eta} u^{\beta} \cdot K^{*} \partial^{\alpha-\eta} f_i^{\gamma-\beta}\right), S \partial^{\alpha} f_i^{\gamma}\right\rangle \right| \\
			\leq & C_2 \left|\partial^{\eta} u^{\beta}\right|_{3,0}\frac{1}{\epsilon^2}\left(\left[\partial^{\alpha-\eta} f_i^{\gamma-\beta}, \partial^{\alpha-\eta} f_i^{\gamma-\beta}\right]+\left[\partial^{\alpha} f_i^{\gamma}, \partial^{\alpha} f_i^{\gamma}\right]\right) \\
			\leq & C_2  \frac{1}{\epsilon^2} |u|_{s+3, r}[f_i, f_i]_{s, r},
		\end{aligned}
		$$
		where we used the fact that the $x$ and $z$ derivatives commute with the operators $\mathcal{K}_i$ and $\mathcal{S}_i$. In fact,
		for the term $\dfrac{i^{1/3}}{\epsilon}\left(u\cdot \mathcal{K}_i^*f_i,g_i\right)$, one has
		\begin{equation}\label{eq:Kfg}
			\begin{aligned}
				&\dfrac{i^{1/3}}{\epsilon}\left(u\cdot K^*f_i,g_i\right)\\=& \frac{2}{\epsilon}\left\langle\mathcal{K}_i\left(u \cdot \mathcal{K}_i^{*} f_i\right), \mathcal{K}_i g_i\right\rangle+\epsilon\left\langle\mathcal{K}_i\left(u \cdot \mathcal{K}_i^{*} f_i\right), \mathcal{S}_i g_i\right\rangle+\epsilon\left\langle\mathcal{S}_i\left(u \cdot \mathcal{K}_i^{*} f_i\right), \mathcal{K}_i g_i\right\rangle+\epsilon^{2}\left\langle\mathcal{S}_i\left(u \cdot \mathcal{K}_i^{*} f_i\right), \mathcal{S}_i g_i\right\rangle \\
				=& \frac{2}{\epsilon}\left\langle u \mathcal{K}_i f_i, \mathcal{K}_i^{2} g_i\right\rangle+\frac{2}{\epsilon}\langle u f_i, \mathcal{K}_i g_i\rangle+\epsilon\langle u \mathcal{K}_i f_i, \mathcal{K}_i \mathcal{S}_i g_i\rangle+\epsilon\langle u f_i, \mathcal{S}_i g_i\rangle +\epsilon\left\langle\mathcal{S}_i(u f_i), \mathcal{K}_i^{2} g_i\right\rangle+\epsilon^{2}\langle\mathcal{S}_i(u f_i), \mathcal{S}_i \mathcal{K}_i g_i\rangle \\
				=& \frac{2}{\epsilon}\left\langle u \mathcal{K}_i f_i, \mathcal{K}_i^{2} g_i\right\rangle+\frac{2}{\epsilon}\langle u f_i, \mathcal{K}_i g_i\rangle+\epsilon\langle u \mathcal{K}_i f_i, \mathcal{K}_i \mathcal{S}_i g_i\rangle+\epsilon\langle u f_i, \mathcal{S}_i g_i\rangle +\epsilon\left\langle(\mathcal{S}_i u) f_i, \mathcal{K}_i^{2} g_i\right\rangle+\epsilon\left\langle u(\mathcal{S}_i f_i), \mathcal{K}_i^{2} g_i\right\rangle \\
				&+\epsilon^{2}\langle(\mathcal{S}_i u) f_i, \mathcal{S}_i \mathcal{K}_i g_i\rangle+\epsilon^{2}\langle u(\mathcal{S}_i f_i), \mathcal{K}_i \mathcal{S}_i g_i\rangle .
			\end{aligned}
		\end{equation}
		For each term in \eqref{eq:Kfg}, by using the Cauchy-Schwarz inequality, Lemma \ref{Lem2.1}, and the Sobolev inequality
		$\|u\|_{L^{\infty}}+\left\|\nabla_{x} u\right\|_{L^{\infty}} \leq C\|u\|_{H^{3}},
		$
		one has
		$$
		\begin{aligned}
			\frac{1}{\epsilon}\left\langle u \mathcal{K}_i f_i, \mathcal{K}_i^{2} g_i\right\rangle & \leq  \dfrac{1}{\epsilon}\|u\|_{L^{\infty}}\|\mathcal{K}_i f_i\|_{L^{2}}\|\mathcal{K}_i^2 g_i\|_{L^{2}} \leq C\|u\|_{L^{\infty}}\left(\dfrac{1}{\epsilon^2}\|\mathcal{K}_i f_i\|_{L^{2}}^2+\|\mathcal{K}_i^2 g_i\|_{L^{2}}^2\right), \\
			\dfrac{1}{\epsilon}\langle u f_i, \mathcal{K}_i g_i\rangle & \leq \dfrac{1}{\epsilon}\|u\|_{L^{\infty}}\|f_i\|_{L^{2}}\|\mathcal{K}_i g_i\|_{L^{2}} \leq C\dfrac{1}{\epsilon}\|u\|_{L^{\infty}}\left(\|\mathcal{K}_i f_i\|_{L^{2}}+\epsilon^2\|\mathcal{S}_i f\|_{L^{2}}\right)\|\mathcal{K}_i g_i\|_{L^{2}} \\
			& \leq C\|u\|_{L^{\infty}}\left(\dfrac{1}{\epsilon^2}\|\mathcal{K}_i f_i\|_{L^{2}}^{2}+\|\mathcal{K}_i g\|_{L^{2}}^{2}+\epsilon^2\|\mathcal{S}_i f\|_{L^{2}}^{2}+\|\mathcal{K}_i g_i\|_{L^{2}}^{2}\right), \\
			\epsilon\langle u \mathcal{K}_i f_i, \mathcal{K}_i \mathcal{S}_i g_i\rangle & \leq \epsilon\|u\|_{L^{\infty}}\|\mathcal{K}_i f_i\|_{L^{2}}\|\mathcal{K}_i\mathcal{S}_i g_i\|_{L^{2}} \leq C\|u\|_{L^{\infty}}\left(\|\mathcal{K}_i f_i\|_{L^{2}}^2+\epsilon^2\|\mathcal{K}_i\mathcal{S}_i  g_i\|_{L^{2}}^2\right),\\
			\epsilon\langle u f_i, \mathcal{S}_i g_i\rangle & \leq \epsilon\|u\|_{L^{\infty}}\|f_i\|_{L^{2}}\|\mathcal{S}_i g_i\|_{L^{2}} \leq C \epsilon\|u\|_{L^{\infty}}\left(\|\mathcal{K}_i f_i\|_{L^{2}}+\epsilon^2\|\mathcal{S}_i f_i\|_{L^{2}}\right)\|\mathcal{S}_i g_i\|_{L^{2}} \\
			& \leq C \epsilon\|u\|_{L^{\infty}}\left(\dfrac{1}{\epsilon^2}\|\mathcal{K}_i f_i\|_{L^{2}}^{2}+\epsilon^2\|\mathcal{S}_i g_i\|_{L^{2}}^{2}+\epsilon^2\|\mathcal{S}_i f_i\|_{L^{2}}^{2}+\epsilon^2\|\mathcal{S}_i g_i\|_{L^{2}}^{2}\right).
		\end{aligned}
		$$
		The latter four terms in \eqref{eq:Kfg} can be approximated similarly and one obtains
		$$
		\frac{i^{1/3}}{\epsilon}\left|\left(u \cdot \mathcal{K}_i^{*} f_i, g_i\right)\right| \leq C \frac{1}{\epsilon^2}\|u\|_{H^{3}}([f_i, f_i]+[g_i, g_i]).
		$$
		
		A combination of the above estimates yields
		$$
		\begin{aligned}
			&\frac{1}{2} \partial_{t}\left(\partial^{\alpha} f_i^{\gamma}, \partial^{\alpha} f_i^{\gamma}\right)+\dfrac{1}{\bar{\theta}}\left( \dfrac{\epsilon^2}{4}\left|\mathcal{S}_i \partial^{\alpha} f_i^{\gamma}\right|_{0}^{2} + \dfrac{1}{2}\left|\mathcal{K}_i^{2} \partial^{\alpha} f_i^{\gamma}\right|_{0}^{2}+\frac{\epsilon^2}{3}\left|\mathcal{S K} \partial^{\alpha} f_i^{\gamma}\right|_{0}^{2}-\left(\frac{4}{\epsilon^2}-\frac{3}{2}\right)\left|\mathcal{K}_i \partial^{\alpha} f_i^{\gamma}\right|_{0}^{2}\right)\\
			&\quad \leq \frac{\delta}{\bar{\theta}}\left(\frac{1}{\epsilon^2}\left|\mathcal{K}_i \partial^{\alpha} f_i^{\gamma}\right|_{0}^{2}+\epsilon^2\left|\mathcal{S}_i \partial^{\alpha} f_i^{\gamma}\right|_{0}^{2}\right)+\frac{C_1}{\delta\bar{\theta}}\left(|u|_{s, r}^{2}+\left|\nabla_{x} u\right|_{s, r}^{2}\right)+ \frac{C_2}{\bar{\theta}\epsilon^2}|u|_{s+3, r}[f_i, f_i]_{s, r}.
		\end{aligned}
		$$
		Summing over $\alpha, \gamma$, and choosing $\delta=\frac{1}{8} $ to absorb the term $\left|\mathcal{S}_i \partial^{\alpha} f_i^{\gamma}\right|_{0}^{2}$ on the RHS by the same term on the LHS,  one gets
		$$
		\partial_{t}(f_i, f_i)_{s, r}+\frac{1}{\bar{\theta}\epsilon^2}\left(\frac{1}{8}-MC_2|u|_{s+3, r}\right) [f_i, f_i]_{s, r} \leq \frac{C_3}{\bar{\theta}}\left(|u|_{s, r}^{2}+\left|\nabla_{x} u\right|_{s, r}^{2}+\frac{1}{\epsilon^2}|\mathcal{K}_i f_i|_{s, r}^{2}\right),
		$$
		where 
		$M$ is the number of possible pairs $(\alpha, \gamma)$, $C_3 = \max \left\{5, 8M C_1\right\}$.
		Thus if one chooses $c_{1}^{\prime}=\min \left\{c_{1}(s+3, r), \frac{1}{16 MC_2}\right\}$, then by Theorem \ref{thm:energyestimate}, $E_{s+3, r}(t)$ is decreasing and $E_{s+3, r}(t) \leq$ $c_{1}^{\prime}$ for all $t$. Hence $|u|_{s+3, r} \leq E_{s+3, r} \leq c_{1}^{\prime}$ for all $t$ and one gets the conclusion with $\lambda_{1}=\frac{1}{16}$.\qed
	\end{proof}
	
	\subsubsection{Proof of Theorem \ref{thm:hypocoercivity}}
	
	\begin{proof}
		Multiplying \eqref{ineq:flambda1} by $\lambda_{4}\kappa\bar{\theta}$ ($\lambda_{4}>0$ is a constant to be chosen later), summing over $i$, and adding to \eqref{ineq:E0} yields
		$$
		\partial_{t} \tilde{E}+\tilde{G} \leq \lambda_{4} \tilde{B},
		$$
		where
		$$
		\begin{aligned}
			&\tilde{E}=E+\lambda_{4}\kappa\bar{\theta}\sum_{i=1}^{N}(f_i, f_i)_{s, r}, \quad \tilde{G}=G+\lambda_{4} \lambda_{1}\frac{\kappa}{\epsilon^2} \sum_{i=1}^{N}[f_i, f_i]_{s, r},  \\
			&\tilde{B}=C\left(\lambda_{1}\right)\kappa\sum_{i=1}^{N}\left(|u|_{s, r}^{2}+\left|\nabla_{x} u\right|_{s, r}^{2}+\frac{1}{\epsilon^{2}}|\mathcal{K}_i f_i|_{s, r}^{2}\right) . 
		\end{aligned}
		$$
		It is clear that 
		$$\tilde{B} \leq C\left(G+\kappa\sum_{i=1}^{N}\frac{1}{\epsilon^{2}}|\mathcal{K}_i f_i|_{s, r}^{2}\right)
		\leq C \tilde{G}.$$ Thus by choosing $\lambda_{4}=\min \left\{\frac{1}{2 C}, 1\right\}, C$ being the previous constant, one gets
		\begin{equation}\label{ineq:Etilde0}
			\partial_{t} \tilde{E}+\frac{1}{2} \tilde{G} \leq 0.
		\end{equation}
		Notice that Lemma \ref{Lem2.1} implies that
		$$
		|f_i|_{s, r}^{2} \leq C\left(|\mathcal{K}_i f_i|_{s, r}^{2}+\epsilon^2|\mathcal{S}_i f_i|_{s, r}^{2}\right),
		$$
		and by definition one also has
		$$
		(f_i, f_i)_{s, r} \leq C\left(|\mathcal{K}_i f_i|_{s, r}^{2}+\epsilon^2|\mathcal{S}_i f_i|_{s, r}^{2}\right) \leq C \frac{1}{\epsilon^{2}}[f_i, f_i]_{s, r}.
		$$
		Thus
		\begin{equation}\label{ineq:EtildeG}
			\tilde{E} \leq C\left(G+\kappa\bar{\theta}\sum_{i=1}^{N}|f_i|_{s, r}^{2}\right)+\lambda_{4}\kappa\bar{\theta}\sum_{i=1}^{N}(f_i, f_i)_{s, r} \leq C\left(G+\sum_{i=1}^{N}\left(|\mathcal{K}_i f_i|_{s, r}^{2}+\epsilon^2|\mathcal{S}_i f_i|_{s, r}^{2}\right)\right) \leq C \tilde{G}.
		\end{equation}
		This together with \eqref{ineq:Etilde0} implies
		$$
		\tilde{E}(t) \leq \tilde{E}(0) e^{-\lambda t},
		$$
		where $\lambda=\frac{1}{2 C}$, $C$ being the constant in \eqref{ineq:EtildeG}.
		Finally, the proof of Theorem \ref{thm:hypocoercivity} is completed by noticing that
		$$
		E(t) \leq \tilde{E}(t) \leq \tilde{E}(0) e^{-\lambda t} \leq\left(E(0)+C^{h}\right) e^{-\lambda t}.
		$$ \qed
	\end{proof}

	\section{Spectral accuracy of the gPC-sG approximation}
	
	\subsection{Notations and preliminary results}
	
	We then introduce the gPC-sG method for the multi-phase flow model \eqref{eq:uf}. Take the basis functions $\left\{\phi_{k}(z)\right\}_{k=1}^{\infty}$ as the gPC basis, i.e., the set of polynomials defined on $\mathbb{Z}$, orthonormal with respect to the given probability measure $\pi(z) \mathrm{d} z$, with $\phi_{k}$ being a polynomial of degree $k-1$.
	Expand the functions $u, \{f_i\}_{i=1}^N$ and $p$ into
	$$
	u(t, x, z)=\sum_{k=1}^{\infty} u_{k}(t, x) \phi_{k}(z), \quad f_i(t, x, v, z)=\sum_{k=1}^{\infty} f_{ik}(t, x, v) \phi_{k}(z), \quad p(t, x, z)=\sum_{k=1}^{\infty} p_{k}(t, x) \phi_{k}(z),
	$$
	and approximate them by truncated series up to order $K$ :
	$$
	u \approx u^{K}=\sum_{k=1}^{K} u_{k} \phi_{k}(z), \quad f_i \approx f_i^{K}=\sum_{k=1}^{K} f_{ik} \phi_{k}(z), \quad p \approx p^{K}=\sum_{k=1}^{K} p_{k} \phi_{k}(z).
	$$
	Then by substituting into \eqref{eq:uf} and conducting the Galerkin projection, one gets the following deterministic system for $(u_{k}, \{f_{ik}\}_{i=1}^N)_{k=1}^{K}$ :
	\begin{equation}\label{eq:ufk}
		\left\{\begin{aligned}
			&(f_{ik})_{t}+v \cdot \nabla_{x} (f_{ik})+\frac{1}{i^{2/3}\epsilon} \cdot\left(\nabla_{v} -\frac{ i v}{2 \bar{\theta}} \right)(uf_i)_k-\frac{i^{1/3}}{\bar{\theta}\epsilon}u_k \cdot v \sqrt{\mu_i}=\frac{1}{i^{2/3}\epsilon}\left(-\frac{i}{\bar{\theta}}\frac{|v|^{2}}{4} +\frac{3}{2} +\frac{\bar{\theta}}{i}\Delta_{v} \right)f_{ik}, \\
			&(u_k)_{t}+(u \cdot \nabla_{x} u)_k+\nabla_{x} p_k-\Delta_{x} u_k+\frac{\kappa}{\epsilon}\sum_{i=1}^{N} i^{1/3} u_k +\frac{\kappa}{\epsilon}  \sum_{i=1}^{N}i^{1/3}  \int_{\mathbb{R}^{3}} \sqrt{\mu_i} (uf_i)_{k}  \mbox{d} v\\
			&\hspace{27em}-\frac{\kappa}{\epsilon}\sum_{i=1}^{N} i^{1/3}  \int_{\mathbb{R}^{3}} v \sqrt{\mu_i} f_{ik} \mbox{d} v=0, \\
			&\nabla_{x} \cdot u_k=0
		\end{aligned}	
		\right.
	\end{equation}
	with the initial data
	$$
	\left.u_{k}\right|_{t=0}=(u_{0})_{k}=\int u_{0} \phi_{k}(z) \pi(z) \mathrm{d} z,\left.\quad f_{ik}\right|_{t=0}=(f_{i,0})_{k}, i=1,2,\ldots,N.
	$$
	Here the gPC coefficient of a product is given by
	$$
	(u w)_{k}=\sum_{j, l=1}^{K} S_{j l k} u_{j} w_{l}
	$$
	where
	\begin{equation}\label{Sijk}
		S_{j l k}=\int \phi_{j} \phi_{l} \phi_{k} \pi(z) \mathrm{d} z,
	\end{equation}
	is the triple product coefficient.
	
	\subsection{Main Results}
	
	The goal is to show that under $\epsilon$-independent smallness assumptions on initial data, the gPC-sG method \eqref{eq:ufk} has uniform-in-$\epsilon$ spectral accuracy for all $K$. 
	Although the system \eqref{eq:ufk} is similar to the original system \eqref{eq:uf}, it indeed requires some $K$-indepnedent smallness requirement on initial data to obtain an estimate independent of $K$. The difficulty comes from the $K^{2}$ nonlinear terms appeared in the $\mathrm{gPC}$ product \eqref{Sijk}.
	
	To overcome this difficulty, we introduce the technical condition \eqref{ineq:thmgPC1} and the weighted Sobolev norm $\sum_{k=1}^{K}\left\|k^{q} u_{k}\right\|_{s}^{2}$ (see the theorem below for detail) as in \cite{ShuJin2018}. 
	It is natural to approximate a nonlinear estimate with this weighted Sobolev norm, being independent of $K$, similar to $\|u w\|_{H^{s}} \leq C\|u\|_{H^{s}}\|w\|_{H^{s}}$ in the $x$-space. With the aid of this technique, we prove
	
	\begin{theorem}\label{thm:gPCcoefficients}
		Assume the technical condition
		\begin{equation}\label{ineq:thmgPC1}
			\left\|\phi_{k}\right\|_{L^{\infty}} \leq C k^{p}, \quad \forall k,
		\end{equation}
		with a parameter $p>0$. Let $q>p+2$ and $s \geq 2$. Let $\left(u_{k}, f_{ik}\right), k=1, \ldots, K$, solve \eqref{eq:ufk} with initial data verifying \eqref{initialu0}, and define the energy $E^{K}$ by
		$$
		E^{K}(t)=E_{s, q}^{K}(t)=\sum_{k=1}^{K}\left(\left\|k^{q} u_{k}\right\|_{s}^{2}+\kappa\bar{\theta}\sum_{i=1}^N\left\|k^{q} f_{ik}\right\|_{s}^{2}+\left|k^{q} \bar{u}_{k}\right|^{2}\right).
		$$
		Then there exists a constant $c_{2}=c_{2}(s, q)>0$, independent of $K$, such that $E^{K}(0) \leq c_{2}$ implies that $E^{K}(t)$ is decreasing in $t$.
	\end{theorem}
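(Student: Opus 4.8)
The plan is to mirror the energy estimate of Theorem \ref{thm:energyestimate} verbatim, but carried out in the space of gPC coefficients with the weight $k^q$; the only genuinely new ingredient is a $K$-independent discrete bilinear estimate for the gPC product \eqref{Sijk}. First I would apply $x$-derivatives $\partial^\alpha$ with $|\alpha|\le s$ to the coefficient system \eqref{eq:ufk}, multiply the first equation by $\kappa\bar\theta\,k^{2q}\partial^\alpha f_{ik}$, the second by $k^{2q}\partial^\alpha u_k$, and the averaged equation for $\bar u_k$ (obtained as in \eqref{eq:ubar}) by $k^{2q}\bar u_k$, then integrate in $(x,v)$ and sum over $i$, over $k=1,\dots,K$, and over $|\alpha|\le s$. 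Since the linear part of \eqref{eq:ufk} is identical to that of \eqref{eq:uf} coefficient-by-coefficient, the same manipulations (including the completion of squares used to produce $G_2$ in Theorem \ref{thm:energyestimate}) yield
$$\tfrac12\partial_t E^K + G^K + B^K \le 0,$$
with a nonnegative good term $G^K$ collecting $\sum_k\|k^q\nabla_x u_k\|_s^2$, the $\bar u_k$ relaxation, and $\sum_{i,k}\big\|k^q\big(u_k\sqrt{\mu_i}-\tfrac{\bar\theta}{i}\nabla_v f_{ik}-\tfrac{v}{2}f_{ik}\big)\big\|_s^2$, and a bad term $B^K=B_1^K+B_2^K+B_3^K$ arising from the nonlinearities $(u\cdot\nabla_x u)_k$ and $(uf_i)_k$.

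The crux is to control $B^K$ independently of $K$. To this end I would establish the discrete bilinear estimate
$$\sum_{k}\big\|k^q (uw)_k\big\|_s^2 \le C\Big(\sum_{j}\|j^q u_j\|_s^2\Big)\Big(\sum_{l}\|l^q w_l\|_s^2\Big),$$
with $C$ independent of $K$, which is the weighted-coefficient analogue of Lemma \ref{Lem1.1}. Writing $(uw)_k=\sum_{j,l}S_{jlk}u_jw_l$ and applying Lemma \ref{Lem1.1} gives $\|k^q(uw)_k\|_s \le C\sum_{j,l}T_{jlk}\,\|j^q u_j\|_s\,\|l^q w_l\|_s$ with Schur kernel $T_{jlk}=k^q|S_{jlk}|/(j^ql^q)$. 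Two structural facts make $T$ bounded: since the $\phi_k$ are $L^2$-orthonormal, the technical condition \eqref{ineq:thmgPC1} yields $|S_{jlk}|\le C\min(j,l,k)^p$; and because $\phi_k$ has degree $k-1$, the triple product $S_{jlk}$ vanishes unless $|j-l|<k\le j+l-1$. Combining these with the threshold $q>p+2$ gives the two Schur bounds $\sup_k\sum_{j,l}T_{jlk}\le C$ and $\sup_{j,l}\sum_k T_{jlk}\le C$, after which Cauchy--Schwarz in $(j,l)$ followed by summation in $k$ delivers the bilinear estimate. This is the step I expect to be the main obstacle, and it is precisely where the weight $k^q$ and the value $q>p+2$ are forced: the index triangle inequality makes $k^q\le C(j+l)^q$, while the polynomial growth $\min(j,l,k)^p$ and the roughly $2\min(j,l)$ admissible neighbours leave a residual power $j^{\,p-q+1}$ (and its symmetric counterpart), summable exactly when $q>p+2$.

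With the bilinear estimate in hand, each bad term is handled as in the proof of Theorem \ref{thm:energyestimate}: taking $w=\nabla_x u$ for $B_1^K$ and $w=f_i$ for $B_2^K,B_3^K$, Cauchy--Schwarz together with the Poincar\'e inequality (to absorb zeroth-order contributions into $G^K$) gives $|B_1^K|,|B_2^K|,|B_3^K|\le \tfrac{C}{\delta}E^K G^K+\delta G^K$, whence
$$\tfrac12\partial_t E^K \le -\Big(1-\tfrac{C}{\delta}E^K-C\delta\Big)G^K.$$
Choosing $\delta=\tfrac{1}{4C}$ and $c_2(s,q)=\tfrac{1}{16C^2}$ — all constants being $K$-independent thanks to the bilinear estimate — I would close the argument by the same continuity/bootstrap reasoning used around \eqref{ineq:E0}: if $E^K(0)\le c_2$, then on the maximal interval where $E^K\le c_2$ one has $1-\tfrac{C}{\delta}E^K-C\delta\ge\tfrac12$, so $\partial_t E^K+G^K\le 0$ there; since $G^K\ge0$ this keeps $E^K$ below $c_2$ and prevents the interval from being finite. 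Hence $E^K(t)\le c_2$ for all $t$, and the differential inequality shows $E^K$ is nonincreasing, with $c_2$ independent of $K$ as required.
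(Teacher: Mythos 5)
Your proposal follows essentially the same route as the paper: the same weighted energy $E^{K}$, the same good/bad decomposition $\tfrac12\partial_t E^K+G^K+B^K\le 0$, the same $K$-independent control of the trilinear gPC sums under $q>p+2$, and the same bootstrap with $\delta=\tfrac{1}{4C}$ and $c_2=\tfrac{1}{16C^2}$. The only substantive difference is that you sketch a Schur-test proof of the weighted bilinear estimate, which the paper simply states as Lemma \ref{Lem3.1} (imported from \cite{ShuJin2018}); your sketch --- using $|S_{jlk}|\le C\min(j,l,k)^{p}$ together with the support restriction $|j-l|<k\le j+l-1$ to leave a residual power $j^{\,p-q+1}$ --- is correct and supplies exactly what that lemma requires.
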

	
	By introducing the technical condition \eqref{ineq:thmgPC1} and the weighted Sobolev norm $\sum_{k=1}^{K}\left\|k^{q} u_{k}\right\|_{s}^{2}$, Theorem \ref{thm:gPCcoefficients} is  proved by the same type of energy estimate as Theorem \ref{thm:energyestimate}. 
	It is noted that $c_{2}$ being independent of $K$ is important because 
	this implies from Proposition \ref{prop_gPC1}
	that the condition $E^{K}(0) \leq c_{2}$ is in fact a consequence of a smoothness condition on $\left(u_{0}, \{f_{i,0}\}_{i=1}^N\right)$ for all $K$, which means the gPC-sG method is stable for all $K$.
	
	Next we directly give the following two propositions as in \cite{ShuJin2018} and proofs of the two propositions are ommited.
	Proposition \ref{prop_gPC1} shows a sufficient condition on the initial data, under which the assumption $E^{K}(0) \leq c_{2}$ in Theorem \ref{thm:gPCcoefficients} holds. 
	Proposition \ref{prop_gPC2} shows that \eqref{ineq:thmgPC1} holds for gPC basis with respect to a large class of probability measures supported on a finite interval.
	
	\begin{proposition} \label{prop_gPC1}
		With the same assumptions as Theorem \ref{thm:gPCcoefficients}, the condition $E_{s, q}^{K}(0) \leq c_{2}(s, q)$ holds if $\left\|E_{s, r}(0)\right\|_{L_{z}^{1}} \leq C c_{2}(s, q)$ with $r>q+\frac{1}{2}$, and $C=C(s, q, r)$.
	\end{proposition}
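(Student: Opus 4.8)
The plan is to bound the weighted discrete energy $E^{K}_{s,q}(0)$ by the continuous $z$-Sobolev energy $\|E_{s,r}(0)\|_{L^1_z}$, with a constant that does \emph{not} depend on $K$, so that smallness of the latter forces $E^{K}_{s,q}(0)\le c_2$. Since $\|E_{s,r}(0)\|_{L^1_z}=\|u_0\|_{s,r}^{2}+\kappa\bar{\theta}\sum_{i=1}^{N}\|f_{i,0}\|_{s,r}^{2}+\|\bar{u}_0\|_{r}^{2}$ by the definition of the norms, while $E^{K}_{s,q}(0)$ is the analogous sum over gPC coefficients, it suffices to prove the single model inequality
\[
\sum_{k=1}^{K} k^{2q}\,\|(u_0)_k\|_{s}^{2} \;\le\; C\,\|u_0\|_{s,r}^{2}, \qquad (u_0)_k=\int_{\mathbb{Z}} u_0\,\phi_k(z)\,\pi(z)\,dz,
\]
together with its verbatim analogues for $f_{i,0}$ and $\bar{u}_0$; summing the three estimates and over $i$ then yields $E^{K}_{s,q}(0)\le C\,\|E_{s,r}(0)\|_{L^1_z}$ with $C=C(s,q,r)$.

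First I would reduce to a scalar statement in $z$. For fixed $x$ and $|\alpha|\le s$, the quantity $\partial^{\alpha}(u_0)_k(x)=\int_{\mathbb{Z}}\partial^{\alpha}u_0(x,z)\,\phi_k(z)\,\pi(z)\,dz$ is precisely the $k$-th gPC coefficient of the scalar function $z\mapsto\partial^{\alpha}u_0(x,z)$. The core ingredient is then the pointwise spectral decay estimate for gPC coefficients: for $g=g(z)$ with $\|g\|_{H^r_z}^{2}:=\sum_{|\gamma|\le r}\int_{\mathbb{Z}}|\partial_z^{\gamma}g|^{2}\pi(z)\,dz<\infty$, one has $|g_k|\le C_r\,k^{-r}\,\|g\|_{H^r_z}$. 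This bound is standard for polynomials orthonormal with respect to a measure on a finite interval (exactly the setting of Proposition \ref{prop_gPC2}); it follows from $r$-fold integration by parts in $z$ against $\phi_k$, each step producing a factor $k^{-1}$ through the differentiation/recurrence relations of the orthogonal family, and it is the estimate quoted from the spectral-approximation analysis of \cite{ShuJin2018}.

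Applying this with $g=\partial^{\alpha}u_0(x,\cdot)$, squaring, weighting by $k^{2q}$ and summing in $k$ gives, for each fixed $x$,
\[
\sum_{k=1}^{K} k^{2q}\,|\partial^{\alpha}(u_0)_k(x)|^{2} \;\le\; C\Big(\sum_{k\ge 1}k^{2q-2r}\Big)\sum_{|\gamma|\le r}\int_{\mathbb{Z}}|\partial^{\alpha}\partial_z^{\gamma}u_0(x,z)|^{2}\,\pi(z)\,dz .
\]
The series $\sum_{k\ge1}k^{2q-2r}$ converges precisely because $r>q+\tfrac12$, i.e. $2q-2r<-1$, and its value is a finite constant $C(q,r)$ that is independent of $K$ (truncation at $k=K$ only lowers the sum). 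Integrating in $x$ over $\mathbb{T}^{3}$ and summing over $|\alpha|\le s$ gives $\sum_{k}k^{2q}\|(u_0)_k\|_{s}^{2}\le C\|u_0\|_{s,r}^{2}$. The identical argument with an additional $v$-integration handles $f_{i,0}$ (take $g=\partial^{\alpha}f_{i,0}(x,v,\cdot)$ and integrate in $(x,v)$), and the argument with no $x$-dependence handles $\bar{u}_0$. Collecting these yields $E^{K}_{s,q}(0)\le C\,\|E_{s,r}(0)\|_{L^1_z}$ with $C=C(s,q,r)$ independent of $K$, so imposing $\|E_{s,r}(0)\|_{L^1_z}$ below a fixed multiple of $c_2$ forces $E^{K}_{s,q}(0)\le c_2$, which is the claim.

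The main obstacle is the uniform spectral decay estimate $|g_k|\le C_r k^{-r}\|g\|_{H^r_z}$ and the matching of its exponent with the summability threshold. Pulling $\|g\|_{H^r_z}$ out of \emph{every} coefficient through the crude pointwise bound $k^{-r}$ — rather than exploiting $\ell^2$-orthogonality of the tail — is exactly what produces the half-power gap $r>q+\tfrac12$ via convergence of $\sum_k k^{2q-2r}$; and one must verify that the constant $C_r$, and hence the whole estimate, is independent of $K$ and of the frozen variables $x,v$. For a general gPC family on a finite interval this decay rests on the differentiation and three-term recurrence relations of the orthogonal polynomials, which is the delicate point and the reason it is natural to invoke it as an established spectral-approximation fact.
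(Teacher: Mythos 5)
Your argument is correct and is essentially the standard proof of this statement: the paper itself omits the proof, deferring to \cite{ShuJin2018}, where exactly this route is taken --- the pointwise gPC-coefficient decay $|g_k|\le C_r k^{-r}\|g\|_{H^r_z}$ (a quoted spectral-approximation fact for orthonormal polynomials on a finite interval), followed by summing $\sum_k k^{2q-2r}$ under the threshold $r>q+\tfrac12$ and integrating in $x$ (and $v$). Your identification of the decay estimate as the one nontrivial ingredient, and of the $\ell^2$-summability condition as the source of the half-power gap, matches the intended argument.
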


	\begin{proposition}\label{prop_gPC2}
		Suppose $\mathbb{Z}=[-R, R], R<+\infty$ with $\pi(z)$ satisfying $1 / \pi(z) \in L^{p_{1}}$ for some $p_{1}>0$. Then \eqref{ineq:thmgPC1} holds with $p=1+1 / p_{1}$.
	\end{proposition}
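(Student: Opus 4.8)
The plan is to exploit that each $\phi_k$ is an algebraic polynomial of degree $k-1$ that is normalized in the \emph{weighted} space $L^2(\pi)$, namely $\int_{-R}^{R}\phi_k^2\,\pi(z)\,\mathrm{d}z = 1$, and to convert this into a bound on the \emph{unweighted} supremum norm $\|\phi_k\|_{L^\infty}$. The mismatch is that the normalization lives in the $\pi$-weighted $L^2$ norm while \eqref{ineq:thmgPC1} asks for a Lebesgue $L^\infty$ bound. I would bridge the two by first passing to an unweighted Lebesgue $L^q$ norm for a carefully chosen $q<2$, and then invoking a Nikolskii-type polynomial inequality to gain $L^\infty$ control at the cost of a power of the degree.

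First I would fix the exponent $q = \dfrac{2p_1}{1+p_1} \in (0,2)$ and estimate the Lebesgue $L^q$ norm by H\"older's inequality. Writing $|\phi_k|^q = \bigl(|\phi_k|^q \pi^{q/2}\bigr)\,\pi^{-q/2}$ and applying H\"older with conjugate exponents $a=\tfrac{1+p_1}{p_1}$ and $a'=1+p_1$, one checks that $qa=2$ and $qa'/2 = p_1$, so that
$$
\int_{-R}^{R}|\phi_k|^q\,\mathrm{d}z
\le \Bigl(\int_{-R}^{R}\phi_k^2\,\pi\,\mathrm{d}z\Bigr)^{1/a}
\Bigl(\int_{-R}^{R}\pi^{-p_1}\,\mathrm{d}z\Bigr)^{1/a'}
= \bigl\|1/\pi\bigr\|_{L^{p_1}}^{\,p_1/(1+p_1)},
$$
using the normalization for the first factor and the hypothesis $1/\pi\in L^{p_1}$ for the second. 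This gives $\|\phi_k\|_{L^q(\mathrm{d}z)}\le C$ with $C$ \emph{independent of $k$}. The value of $q$ is dictated precisely by matching the weight integrability: it is the largest exponent for which $\pi^{-qa'/2}$ is integrable under the assumption $1/\pi\in L^{p_1}$.

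Then I would apply the Nikolskii inequality for algebraic polynomials of degree $n=k-1$ on $[-R,R]$, namely $\|\phi_k\|_{L^\infty}\le C n^{2/q}\|\phi_k\|_{L^q(\mathrm{d}z)}$ (rescaling from $[-1,1]$ to $[-R,R]$ only changes the constant, not the power of $n$). Combining with the uniform $L^q$ bound yields
$$
\|\phi_k\|_{L^\infty}\le C\,(k-1)^{2/q}\le C\,k^{2/q},
$$
and since $2/q = 1+\tfrac{1}{p_1}$ this is exactly \eqref{ineq:thmgPC1} with $p=1+1/p_1$. The main obstacle is the Nikolskii step: one must use the sharp $L^q$-to-$L^\infty$ polynomial inequality with the factor-$2$ exponent $2/q$ (the extra factor of $2$ compared with the trigonometric case stems from the rapid growth algebraic polynomials can exhibit near the endpoints $\pm R$), and it is this factor that produces the additive $1$ in $p=1+1/p_1$. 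When $p_1<1$ one has $q<1$, so $\|\cdot\|_{L^q}$ is only a quasi-norm; the H\"older step is unaffected since its conjugate exponents $a,a'\ge 1$ in any case, and one only needs the quasi-norm version of the Nikolskii inequality, which remains valid.
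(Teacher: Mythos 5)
Your proof is correct. Note, however, that the paper itself does not prove this proposition: it explicitly omits the argument and defers to \cite{ShuJin2018}, so the comparison must be made against the standard proof given there. That proof is more elementary in its polynomial ingredient: letting $M=\|\phi_k\|_{L^\infty}$ be attained at $z_0$, one uses the Markov inequality $\|\phi_k'\|_{L^\infty}\le Ck^2 M$ to show $|\phi_k|\ge M/2$ on a subinterval $I$ of length $\gtrsim k^{-2}$, then bounds $1=\int\phi_k^2\pi\ge \tfrac{M^2}{4}\int_I\pi$ and lower-bounds $\int_I\pi\gtrsim |I|^{1+1/p_1}$ by exactly the same H\"older splitting of $1=\pi^{\theta}\pi^{-\theta}$ with $\theta=p_1/(1+p_1)$ that you use; this yields $M\le Ck^{1+1/p_1}$. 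You instead run the H\"older step globally to get a $k$-uniform bound on $\|\phi_k\|_{L^q(\mathrm{d}z)}$ with $q=2p_1/(1+p_1)$, and then invoke the algebraic Nikolskii inequality $\|\phi_k\|_{L^\infty}\le Cn^{2/q}\|\phi_k\|_{L^q}$, whose exponent $2/q=1+1/p_1$ reproduces the claimed $p$. The two routes are close kin (Nikolskii's inequality is itself proved by a Markov-type localization or via Christoffel functions), but yours packages the degree-dependence into a single citable black box and makes transparent where the additive $1$ in $p=1+1/p_1$ comes from, at the price of needing the quasi-norm version of Nikolskii when $p_1<1$ — a point you correctly flag, and which does hold for $0<q<1$ on a compact interval. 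Both arguments use $R<\infty$ and the weighted orthonormality in the same essential way, and your exponent bookkeeping ($qa=2$, $qa'/2=p_1$, $2/q=1+1/p_1$) is all correct.
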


	Note that if $\pi(z)$ is continuous and has only finite number of zeros, with $\pi\left(z-z_{0}\right) \geq c\left|z-z_{0}\right|^{p_{3}}$ for some $p_{3}>0$, $c>0$ near any zero $z=z_{0}$, then the condition of Proposition \ref{prop_gPC2} is satisfied with any $p_{1}<1 / p_{3}$. This includes all piecewise polynomial probability distributions on a finite interval with isolated zeros. 
	For some special cases \cite{Szego1939}, 
	\eqref{ineq:thmgPC1} holds with $p=1 / 2$ for the uniform distribution on $\mathbb{Z}=[-1,1]$ with normalized Legendre polynomials as gPC basis, and
	holds with $p=0$ for the distribution $\pi(z)=\frac{2}{\pi \sqrt{1-z^{2}}}$ on $\mathbb{Z}=[-1,1]$ with normalized Chebyshev polynomials as gPC basis. 
	
	Finally, 
	we obtain the spectral accuracy of the gPC-sG method, uniformly in $t$ and $\epsilon$, with a small initial data assumption on $\left(u_{0}, \{f_{i,0}\}_{i=1}^N\right)$, independent of $K$ and $\epsilon$:

	\begin{theorem}\label{thm:gPCaccuracy}
		Assume \eqref{ineq:thmgPC1} holds. Let $(u_{k}, \{f_{ik}\}_{i=1}^N)_{k=1}^K$ solve \eqref{eq:ufk} with initial data verifying \eqref{initialu0}-\eqref{initialu0_2}. There exists a constant $c_{1}^{\prime \prime}(s, r)$ such that the following holds: Assume $s \geq 0$, $r>p+\frac{5}{2}$, $\left\|E_{s+3, r}(0)\right\|_{L_{z}^{\infty}} \leq c_{1}^{\prime \prime}(s, r)$, and $C_{s, r}^{h}$ is finite. Then $E^{e}$, the energy of the gPC approximation error, defined $b y$
		$$
		E^{e}=\left\|u^{e}\right\|_{s}^{2}+\kappa\bar{\theta}\sum_{i=1}^N\left\|f_i^{e}\right\|_{s}^{2}+\left\|\bar{u}^{e}\right\|^{2}, \quad u^{e}=u-u^{K}, \quad f_i^{e}=f_i-f_i^{K},
		$$
		satisfies
		$$
		E^{e} \leq \frac{C}{K^{2 r}},
		$$
		for all time, i.e., the gPC-sG method has $r$-th order accuracy uniformly in time.
	\end{theorem}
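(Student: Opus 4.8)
The plan is to split the total error into a gPC truncation (projection) error and an evolution error, bound the first by the uniform $z$-regularity already established, and control the second by re-running the energy–hypocoercivity estimate of Theorems \ref{thm:energyestimate}--\ref{thm:hypocoercivity} on an error equation. Let $P_K$ denote the orthogonal projection in $L^2_z$ onto $\mathrm{span}\{\phi_1,\dots,\phi_K\}$ and write
$$
u^e=(I-P_K)u+e^K_u,\qquad e^K_u:=P_K u-u^K ,
$$
and likewise $f_i^e=(I-P_K)f_i+e^K_{f_i}$, $\bar u^e=(I-P_K)\bar u+e^K_{\bar u}$. For the first piece, the standard spectral estimate for orthogonal-polynomial expansions gives $\|(I-P_K)u\|_s\le C K^{-r}\,|u|_{s,r}$ pointwise in $z$, and since Theorem \ref{thm:hypocoercivity} (through Theorem \ref{thm:energyestimate}) bounds $E_{s,r}(t)$ uniformly in $t$, one has $\|(I-P_K)u\|_s^2+\kappa\bar\theta\sum_i\|(I-P_K)f_i\|_s^2\le CK^{-2r}$ for all time. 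Thus it remains to bound the evolution error $e^K:=(e^K_u,\{e^K_{f_i}\}_i,e^K_{\bar u})$, which lives in the span of $\{\phi_k\}_{k\le K}$.

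Next I would derive the error equation by projecting \eqref{eq:uf} onto the first $K$ gPC modes and subtracting the gPC-sG system \eqref{eq:ufk}. All linear terms commute with $P_K$, so they reproduce exactly the transport–drag–Fokker–Planck structure of \eqref{eq:ufgamma} and \eqref{eq:fgamma} with $(u,f_i)$ replaced by $(e^K_u,e^K_{f_i})$; in particular they generate the same dissipative good terms and admit the same hypocoercive modification, yielding a modified energy $\tilde E^e$ and production $\tilde G^e$ for the error. The only genuine source terms come from the nonlinearities. For the drag/coupling product the residual is
$$
P_K(uf_i)-P_K(u^Kf_i^K)=P_K\big(u^e f_i+u^K f_i^e\big),
$$
with analogous expressions $P_K(u^e\cdot\nabla_x u+u^K\cdot\nabla_x u^e)$ for the fluid convection and a similar one for the $\bar u$-coupling term. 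Substituting $u^e=(I-P_K)u+e^K_u$ and $f_i^e=(I-P_K)f_i+e^K_{f_i}$ splits each residual into (i) terms carrying a projection-error factor together with a bounded exact or numerical factor, and (ii) terms bilinear in $e^K$ with a bounded coefficient.

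Then I would run the combined energy–hypocoercivity estimate of Section \ref{sec:3.3} on this error system. Because both the exact $f_i$ and the numerical $f_{ik}$ preserve the mass condition \eqref{initialu0_2}, the error $e^K_{f_i}$ stays orthogonal to $\sqrt{\mu_i}$, so the coercivity Lemma \ref{Lem2.1} applies and the norm equivalence $E^e\le C\tilde E^e$ of \eqref{ineq:EtildeG} carries over to the error. The bilinear-in-$e^K$ terms of type (ii) are absorbed into $\tilde G^e$ using smallness, while the type-(i) terms act as forcing. The crux is that every nonlinear contribution must be estimated with a constant independent of $K$, even though the gPC products are convolutions over $O(K^2)$ index pairs. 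This is exactly where the growth hypothesis \eqref{ineq:thmgPC1} and the weighted Sobolev norm $\sum_k\|k^q u_k\|_s^2$ enter: they furnish a $K$-independent bilinear estimate of the form $\|P_K(gw)\|_s\le C\,\|g\|_s\,\big(\sum_k\|k^q w_k\|_s^2\big)^{1/2}$, in which one assigns the plain norm to the small or energy factor and the weighted norm to the bounded smooth factor. The weighted norm of $u^K$ is bounded uniformly in $K$ by Theorem \ref{thm:gPCcoefficients}, and that of the exact solution by the $z$-regularity, the exponents being compatible through $p+2<q<r-\tfrac12$ (guaranteed by $r>p+\tfrac52$ and Proposition \ref{prop_gPC1}). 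Hence the type-(i) forcing is $O(K^{-r})$ in the energy norm and, after Young's inequality, contributes $O(K^{-2r})$, giving
$$
\partial_t\tilde E^e+\lambda\,\tilde E^e\le \frac{C}{K^{2r}}.
$$

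Finally, the scheme is initialized with the exact gPC coefficients, $u_k(0)=\langle u_0,\phi_k\rangle_z$, so $u^K(0)=P_K u_0$ and the evolution error vanishes at $t=0$, whence $\tilde E^e(0)=0$. Gronwall's inequality then yields $\tilde E^e(t)\le \frac{C}{\lambda K^{2r}}$ uniformly in $t$, and combining with the projection-error bound via $E^e\le 2\big(E^{\mathrm{proj}}+C\tilde E^e\big)$ with $E^{\mathrm{proj}}=O(K^{-2r})$ gives $E^e\le CK^{-2r}$ for all time. The main obstacle is precisely the uniform-in-$K$ control of the nonlinear terms in the error equation: a naive estimate produces constants and smallness thresholds that degrade as $K\to\infty$, and only the weighted-norm bilinear estimate, together with the $K$-independent bound on the numerical solution from Theorem \ref{thm:gPCcoefficients}, removes this dependence and lets the projection-error forcing retain its genuine $O(K^{-r})$ size.
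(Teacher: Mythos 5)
Your decomposition $u^e=(I-P_K)u+e^K_u$ and the identification of the nonlinear residual $P_K(u^ef_i+u^Kf_i^e)$ are sound, and the projection part is indeed $O(K^{-2r})$ in the integrated norm (though your claim that $\|(I-P_K)u\|_s\le CK^{-r}|u|_{s,r}$ holds \emph{pointwise in $z$} is not right --- the spectral projection estimate is an $L^2_z$ statement; you use it in integrated form anyway, so this is only a slip of wording). The genuine gap is the claimed differential inequality $\partial_t\tilde E^e+\lambda\tilde E^e\le CK^{-2r}$. To produce the coercive term $\lambda\tilde E^e$ you must add the hypocoercivity functional $((f_i^e,f_i^e))_s$ to the energy, and its evolution contains the cross term $\frac{i^{1/3}}{\epsilon}\bigl(\bigl(P_K\partial^\alpha(u^e\cdot\mathcal{K}_i^*f_i),\partial^\alpha f_i^e\bigr)\bigr)$. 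Expanded in the $(\cdot,\cdot)$ inner product this pairs $u^e$ against $\mathcal{K}_if_i$ and $\mathcal{K}_i^2f_i^e$, and to close it one needs $\max_{|\beta|\le s}\|\partial^\beta u^e\|_{L^\infty}$; since $u^e$ is not smooth in $z$ uniformly in $K$, the only available route is the inverse inequality $\|g\|_{L^\infty_z}\le CK^{p+1/2}\|g\|_{L^2_z}$ on $\mathrm{span}\{\phi_k\}_{k\le K}$, which degrades the rate to $K^{-(r-p-1/2)}$. This is precisely why the paper's Theorem \ref{thm:gPChypocoercivity}, which \emph{does} carry out the hypocoercivity estimate for the error, only obtains $E^e\le CK^{-(r-p-1/2)}e^{-\lambda^e t}$ rather than $K^{-2r}$. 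Your proposal asserts the full rate together with exponential decay, which is strictly stronger than anything the paper proves and is not justified by the estimates you invoke.

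The paper's proof of this particular theorem avoids hypocoercivity for the error altogether. It works directly with $u^e=u-u^K$ (no splitting), establishes only the plain energy inequality $\tfrac12\partial_tE^e+\tfrac23G^e\le CHG^e+CS$ with $G^e\ge0$, absorbs $CHG^e$ by the smallness of $\|E_{s+3,r}(0)\|_{L_z^\infty}$, and obtains uniformity in time not from a Gronwall term $\lambda E^e$ but from the fact that the source $S=\|(I-P_K)(u\cdot\nabla_xu)\|_s^2+\sum_i\|(I-P_K)(uf_i)\|_s^2$ satisfies $S\le CK^{-2r}e^{-\lambda t}$; the exponential decay of $S$ comes from applying Theorem \ref{thm:hypocoercivity} to the \emph{exact} solution, and this is exactly where the hypothesis that $C_{s,r}^h$ is finite is used. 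Your proposal never invokes $C_{s,r}^h$, which is a symptom of the missing step: without either the decay of $S$ in time or a legitimate $\lambda\tilde E^e$ term, integrating $\partial_tE^e\le CS$ would only give a bound growing linearly in $t$. To repair your argument, drop the hypocoercive modification of the error energy and instead prove $S\le CK^{-2r}e^{-\lambda t}$ from the $z$-regularity and exponential decay of $(u,\{f_i\})$, then integrate in time.
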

	
	This theorem is proved by an energy estimate in the $(x, v, z)$ space on $\left(u^{e},\{ f_i^{e}\}_{i=1}^N\right)$ with the aid of the previous theorems.
	
	Finally we prove that the error also decays exponentially in time by a hypocoercivity argument:

	\begin{theorem}\label{thm:gPChypocoercivity}
		Assume \eqref{ineq:thmgPC1} holds. Let $(u_{k}, \{f_{ik}\}_{i=1}^N)_{k=1}^K$ solves \eqref{eq:ufk} with initial data verifying \eqref{initialu0}-\eqref{initialu0_2}. There exists a constant $c_{2}^{\prime \prime}(s, r)$ such that the following holds: Assume $s \geq 0, r>p+\frac{5}{2}$, $\left\|E_{s+6, r}(0)\right\|_{L_{z}^{\infty}} \leq c_{2}^{\prime \prime}(s, r)$, and $C_{s+3, r}^{h}$ is finite. Then there exists a constant $\lambda^{e}>0$ such that
		$$
		E^{e} \leq \frac{C}{K^{r-p-1 / 2}} e^{-\lambda^{e} t}.
		$$
	\end{theorem}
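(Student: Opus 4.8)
The plan is to transport the energy-plus-hypocoercivity mechanism that upgraded Theorem \ref{thm:energyestimate} into Theorem \ref{thm:hypocoercivity}, but now applied to the error variables $(u^e,\{f_i^e\}_{i=1}^N)$, while carrying along the gPC projection error as a source term that I control by the spectral accuracy already proved in Theorem \ref{thm:gPCaccuracy}. First I would write the error equation: subtracting the Galerkin system \eqref{eq:ufk} (re-summed as $u^K=\sum_{k=1}^K u_k\phi_k$, and likewise for $f_i^K$) from the exact system \eqref{eq:uf}, one obtains for $(u^e,\{f_i^e\})$ a system whose linear, transport and collision parts are identical in form to those of \eqref{eq:uf}, plus source terms $R^u_K$, $R^{f_i}_K$. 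These sources collect (i) the truncation tail $(I-P_K)(\cdots)$ of the exact solution, where $P_K$ denotes the $L^2_z$ projection onto $\mathrm{span}\{\phi_1,\dots,\phi_K\}$, and (ii) the aliasing mismatch between the projected nonlinearity and the nonlinearity formed from the truncated unknowns, coming from the triple gPC products \eqref{Sijk}.

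Next I would run two estimates on this error system, mirroring Section \ref{sec:3.3} and the proof of Lemma \ref{Lem2.2}. The first is the plain $L^2$-in-$(x,v,z)$ energy estimate of Theorem \ref{thm:gPCaccuracy}, giving $\partial_t E^e+G^e\le(\text{nonlinear terms})+(\text{source contribution})$, where the nonlinear terms are absorbed using the $K$-independent smallness of both the gPC solution (Theorem \ref{thm:gPCcoefficients}) and the error (Theorem \ref{thm:gPCaccuracy}), exactly as the bad terms $B_1,B_{2,i},B_{3,i}$ were absorbed in \eqref{ineq:E0}. The second is the hypocoercivity estimate: taking the $(\cdot,\cdot)_{s,r}$ inner product of the $f_i^e$-equation with $\partial^\alpha (f_i^e)^\gamma$ and repeating the computation \eqref{eq:Kfg} verbatim with $f_i^e$ in place of $f_i$, I expect the analogue of \eqref{ineq:flambda1},
$$\partial_t (f_i^e,f_i^e)_{s,r}+\frac{\lambda_1}{\bar\theta\epsilon^2}[f_i^e,f_i^e]_{s,r}\le \frac{C}{\bar\theta}\Big(|u^e|_{s,r}^2+|\nabla_x u^e|_{s,r}^2+\frac{1}{\epsilon^2}|\mathcal{K}_i f_i^e|_{s,r}^2\Big)+\frac{C}{\bar\theta\epsilon^2}R_K^{h}(t),$$
with $R_K^h$ the source measured in the $[\cdot,\cdot]$-norm. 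Then, as in the proof of Theorem \ref{thm:hypocoercivity}, I form $\tilde E^e=E^e+\lambda_4\kappa\bar\theta\sum_{i=1}^N (f_i^e,f_i^e)_{s,r}$, choose $\lambda_4$ small, and use Lemma \ref{Lem2.1} together with the coercivity \eqref{ineq:EtildeG} to obtain both $\partial_t\tilde E^e+\tfrac12\tilde G^e\le R_K(t)$ and $\tilde E^e\le C\tilde G^e$, where $R_K$ bundles the energy- and hypocoercivity-level sources.

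The decisive step is bounding $R_K(t)$, and this is where the exponent $r-p-\tfrac12$ and the regularity loss ($\|E_{s+6,r}(0)\|_{L_z^\infty}$ small, $C^h_{s+3,r}$ finite) enter. Because the whole hypocoercivity argument is carried out pointwise in $z$ (just as in Lemma \ref{Lem2.2} and Theorem \ref{thm:hypocoercivity}), the tail $(I-P_K)$ of the exact solution must be measured in an $L^\infty_z$-type norm rather than the $L^2_z$ norm sufficient for Theorem \ref{thm:gPCaccuracy}; invoking \eqref{ineq:thmgPC1} and Cauchy--Schwarz gives $\|(I-P_K)w\|_{L^\infty_z}\le C\big(\sum_{k>K}k^{2(p-r)}\big)^{1/2}\big(\sum_{k>K}k^{2r}\|w_k\|^2\big)^{1/2}\le C K^{-(r-p-1/2)}$, whose summability needs $r>p+\tfrac12$. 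The two extra velocity derivatives in the $[\cdot,\cdot]_{s,r}$-norm together with the nonlinear commutator of Lemma \ref{Lem2.2} (which already required $|u|_{s+3,r}$) push this to $r>p+\tfrac52$ and to the exact-solution regularity $E_{s+6,r}$, with finiteness of $C^h_{s+3,r}$ guaranteeing that the hypocoercivity norm of the tail is controlled at $t=0$. Since Theorem \ref{thm:hypocoercivity} gives exponential decay $e^{-\lambda t}$ of the exact solution, after Young's inequality the source inherits a bound $R_K(t)\le C K^{-(r-p-1/2)} e^{-\lambda' t}$, which suffices to match the stated exponent.

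Finally I close with a Duhamel/Gronwall step: combining $\partial_t\tilde E^e+\tfrac12\tilde G^e\le R_K$ with $\tilde E^e\le C\tilde G^e$ yields $\partial_t\tilde E^e+\tilde\lambda\,\tilde E^e\le R_K(t)$ for $\tilde\lambda=1/(2C)$, and integrating against $e^{\tilde\lambda t}$, using the exponential bound on $R_K$ and on the initial error $\tilde E^e(0)=O(K^{-(r-p-1/2)})$, gives $\tilde E^e(t)\le C K^{-(r-p-1/2)} e^{-\lambda^e t}$ with $\lambda^e=\min\{\tilde\lambda,\lambda'\}$; since $E^e\le\tilde E^e$ this is the claim. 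The main obstacle I anticipate lies in the second and third steps: showing that every source produced by the aliasing of the gPC products \eqref{Sijk} can be bounded in the higher-order hypocoercivity norm by the pointwise tail estimate with constants that are \emph{simultaneously} independent of $\epsilon$ and $K$ — the $\epsilon$-uniformity forced by the hypocoercivity weights in $(\cdot,\cdot)$ and $[\cdot,\cdot]$, and the $K$-uniformity furnished by the weighted-norm device underlying Theorem \ref{thm:gPCcoefficients}.
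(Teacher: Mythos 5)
Your overall architecture --- error system, combined energy/hypocoercivity Lyapunov functional $\tilde E^e$, Gronwall against a decaying source --- is the same as the paper's, and your identification of where the exponent $r-p-1/2$ and the indices $s+6$, $C^h_{s+3,r}$ enter is essentially right. But there is a genuine gap at the decisive step, namely your claim that ``after Young's inequality the source inherits a bound $R_K(t)\le CK^{-(r-p-1/2)}e^{-\lambda' t}$.'' The hypocoercivity estimate for $f_i^e$ unavoidably produces, from the cross term $u^e\cdot\mathcal{K}_i^*f_i$ (and from the tail $(I-P_K)(u\cdot\mathcal{K}_i^*f_i)$), a source of the form $\frac{C}{K^{r-p-1/2}}\frac{1}{\epsilon^{2}}[[f_i,f_i]]_{s,r}$, carrying the singular weight $\epsilon^{-2}$ in front of the \emph{exact solution's} hypocoercivity seminorm. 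Theorem \ref{thm:hypocoercivity} controls $(f_i,f_i)_{s,r}$ exponentially, but the comparison between the two seminorms goes the wrong way --- $(f_i,f_i)_{s,r}\le C\epsilon^{-2}[f_i,f_i]_{s,r}$, not the reverse --- so there is no $\epsilon$-uniform pointwise-in-time bound $\epsilon^{-2}[[f_i,f_i]]_{s,r}\le Ce^{-\lambda t}$ available, and Young's inequality cannot convert this source into the form required by Lemma \ref{Lem4}. You flag the $\epsilon$-uniformity as ``the main obstacle,'' but you do not supply the mechanism that resolves it.

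The paper's fix is to enlarge the error Lyapunov functional by a small multiple of the exact solution's functional: $\tilde E^e=E^e+\lambda_4^e\kappa\bar\theta\sum_{i=1}^N((f_i^e,f_i^e))_s+\frac{\lambda_5^e}{K^{r-p-1/2}}\|\tilde E\|_{L_z^1}$, with the matching dissipation $\frac{\lambda_5^e}{2K^{r-p-1/2}}\|\tilde G\|_{L_z^1}$ added to $\tilde G^e$. Since $\tilde G$ contains $\lambda_4\lambda_1\kappa\,\epsilon^{-2}\sum_i[[f_i,f_i]]_{s,r}$, choosing $\lambda_5^e=4C\lambda_4^e/(\lambda_4\lambda_1)$ lets the exact solution's own dissipation absorb the $\epsilon^{-2}[[f_i,f_i]]_{s,r}$ source; only the benign $CK^{-r}e^{-\lambda t}$ source from \eqref{ineq:EeGeK} then remains, Lemma \ref{Lem4} applies, and the added term costs only $O(K^{-(r-p-1/2)})$ at $t=0$, consistent with the final rate. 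Two smaller discrepancies: the paper runs the error hypocoercivity estimate integrated in $z$ (with $((\cdot,\cdot))$ and $[[\cdot,\cdot]]$), not pointwise in $z$ as you propose --- pointwise in $z$ you would lack control of the error, which Theorem \ref{thm:gPCaccuracy} provides only after integration in $z$; and the $K^{p+1/2}$ loss enters in the paper through $\max_{|\beta|\le s}\|\partial^{\beta}u^e\|_{L^\infty}\le CK^{p+1/2}\|u^e\|_{L_z^2(H_x^2)}$ in the term $P_K(u^e\cdot\mathcal{K}_i^*f_i)$, rather than through an $L_z^\infty$ bound on the truncation tail, which the paper keeps at the full rate $K^{-r}$.
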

	
	These theorems imply that for random initial data near the global equilibrium, in the sense that $\left(u_{0}, \{f_{i,0}\}_{i=1}^N\right)$ is small in some suitable Sobolev spaces, the gPC-sG method has spectral accuracy, uniformly in time and $\epsilon$, and it captures the long-time behavior of \eqref{eq:uf} with random initial data.

	\subsection{Estimate of the gPC coefficients (Proof of Theorem \ref{thm:gPCcoefficients})}
	
	In this section, all the norms and inner products acting on $\phi_{k}$ are taken on the random space $\mathbb{Z}$ and with respect to the measure $\pi(z) \mathrm{d} z$. In order to prove the estimate for the gPC coefficients, we need the extra assumption \eqref{ineq:thmgPC1} on  basis functions.

	Before the proof, we state the following lemma approximating a nonlinear term, which is a key estimate analogous to Lemma 5.1 in \cite{ShuJin2018}:
	
	\begin{lemma}\label{Lem3.1}
		Assume $\left|S_{j l k}\right| \leq C j^{p}$, which follows from \eqref{ineq:thmgPC1}. Let $q>p+2$. Let $s>\frac{3}{2}$, $\alpha$ be a multi-index with $|\alpha| \leq s$. Let $u_{k}=$ $u_{k}(x) \in H^{s}, w_{k}=w_{k}(x) \in H^{s}, y_{k}=y_{k}(x) \in L^{2}, f_{k}=f_{k}(x, v) \in \tilde{H}^{2}, g_{k}=g_{k}(x, v) \in L^{2}$. Then
		$$
		\begin{aligned}
			&\left|\sum_{k=1}^{K} k^{2 q}\left\langle\partial^{\alpha}(u w)_{k}, y_{k}\right\rangle\right| \leq \dfrac{C}{\delta} \sum_{j=1}^{K}\left\|j^{q} u_{j}\right\|_{s}^{2} \sum_{l=1}^{K}\left\|l^{q} w_{l}\right\|_{s}^{2}+\delta \sum_{k=1}^{K}\left\|k^{q} y_{k}\right\|_{0}^{2}, \\
			&\left|\sum_{k=1}^{K} k^{2 q}\left\langle\partial^{\alpha}(u f)_{k}, g_{k}\right\rangle\right| \leq \dfrac{C}{\delta} \sum_{j=1}^{K}\left\|j^{q} u_{j}\right\|_{s}^{2} \sum_{l=1}^{K}\left\|l^{q} f_{l}\right\|_{s}^{2}+\delta \sum_{k=1}^{K}\left\|k^{q} g_{k}\right\|_{0}^{2},
		\end{aligned} 
		$$
		where the constants are independent of $K$, and $\delta$ is any positive constant.
	\end{lemma}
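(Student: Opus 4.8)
The plan is to recast the left-hand sides as weighted trilinear sums over the gPC indices and then reduce everything to two discrete Schur (``Young for series'') estimates whose constants are visibly independent of $K$. I will treat the first inequality in detail; the second is identical after replacing the scalar product estimate by its $\tilde H^s$ analog. First I would expand the gPC product through \eqref{Sijk}, writing $\partial^\alpha(uw)_k=\sum_{j,l=1}^{K}S_{jlk}\,\partial^\alpha(u_jw_l)$, then apply the Cauchy--Schwarz inequality in $x$ together with the bilinear bound $\|u_jw_l\|_s\le C\|u_j\|_s\|w_l\|_s$ of Lemma \ref{Lem1.1} (valid since $s>3/2$). This yields
\[
\Bigl|\sum_{k=1}^{K}k^{2q}\langle\partial^\alpha(uw)_k,y_k\rangle\Bigr|\le C\sum_{j,l,k}|S_{jlk}|\,k^{2q}\,\|u_j\|_s\|w_l\|_s\|y_k\|_0 .
\]
Introducing the normalized quantities $a_j=\|j^q u_j\|_s$, $c_l=\|l^q w_l\|_s$, $e_k=\|k^q y_k\|_0$ converts the right-hand side into $\sum_{j,l,k}M_{jlk}\,a_j c_l e_k$ with kernel $M_{jlk}=|S_{jlk}|\,k^{q}/(j^{q}l^{q})$.

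The decisive step is a pointwise bound on $M_{jlk}$. Two structural facts about the triple product are used. Since $\phi_k$ has degree $k-1$ and is orthogonal to all polynomials of lower degree while $\phi_j\phi_l$ has degree $j+l-2$, the coefficient $S_{jlk}$ vanishes unless $k\le j+l-1$, and by the full symmetry of \eqref{Sijk} also unless $|j-l|+1\le k$; moreover that same symmetry upgrades the hypothesis $|S_{jlk}|\le Cj^{p}$ to $|S_{jlk}|\le C\min(j,l,k)^{p}$. Combining the support bound $k\le j+l\le 2\max(j,l)$, which gives $k^{q}/(j^{q}l^{q})\le C/\min(j,l)^{q}$, with $|S_{jlk}|\le C\min(j,l)^{p}$, I obtain
\[
M_{jlk}\le C\,\min(j,l)^{\,p-q},\qquad M_{jlk}=0\ \text{unless}\ |j-l|+1\le k\le j+l-1 .
\]

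With this kernel estimate in hand, I would apply Young's inequality $a_jc_le_k\le\frac{1}{4\delta}(a_jc_l)^2+\delta e_k^{2}$ and split the sum, reducing matters to the two Schur bounds $\sup_{j,l}\sum_{k}M_{jlk}<\infty$ and $\sup_{k}\sum_{j,l}M_{jlk}<\infty$, uniformly in $K$. For the first, the number of admissible $k$ is $2\min(j,l)-1$, so $\sum_k M_{jlk}\le C\min(j,l)^{\,p-q+1}\le C$ because $p-q+1<0$. For the second, writing the admissible region as $|j-l|\le k-1$, $j+l\ge k+1$ and summing in the smaller index shows $\sum_{j,l}M_{jlk}\le C\sum_{m\ge 1}m^{\,p-q+1}$, which converges \emph{precisely} under the hypothesis $q>p+2$ and is independent of $k$ and $K$. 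Feeding these back gives $\frac{C}{\delta}\bigl(\sum_j a_j^2\bigr)\bigl(\sum_l c_l^2\bigr)+C\delta\sum_k e_k^2$, which is the claimed bound after renaming $C\delta$ as $\delta$. The estimate for $(uf)_k,g_k$ follows verbatim, using the $L^2_{x,v}$ Cauchy--Schwarz inequality and the second bilinear bound of Lemma \ref{Lem1.1}.

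I expect the main obstacle to be exactly the $K$-independence of the constants, which is where a naive estimate fails: the count of nonzero triples $S_{jlk}$ grows with $K$, so boundedness cannot come from crude counting. The resolution is to spend the full weight $k^{2q}$ carefully---one factor $k^q$ pairs with $y_k$, and the other $k^q$ is transferred onto the indices $j,l$ via the support constraint $k\le j+l-1$---so that the symmetry-enhanced bound $|S_{jlk}|\le C\min(j,l)^p$ produces the summable kernel $\min(j,l)^{p-q}$. Verifying that the resulting series converge, and tracing that the threshold is sharp at $q=p+2$ in the $k$-uniform Schur bound, is the technical heart of the argument.
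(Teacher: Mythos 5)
Your argument is correct, and the key mechanism --- upgrading $|S_{jlk}|\le Cj^{p}$ to $|S_{jlk}|\le C\min(j,l,k)^{p}$ by the symmetry of \eqref{Sijk}, exploiting the orthogonality-induced support condition $|j-l|+1\le k\le j+l-1$ to trade the weight $k^{q}$ for $\min(j,l)^{-q}$, and closing with two $K$-uniform Schur sums that converge exactly when $q>p+2$ --- is sound; in particular your count of admissible indices in the $k$-uniform Schur bound is right because the lower constraint $j+l\ge k+1$ caps the number of admissible larger indices at $O(\min(j,l))$ rather than $O(k)$. Note that the paper itself does not prove this lemma but imports it as Lemma 5.1 of \cite{ShuJin2018}; your proof is a correct, self-contained reconstruction of essentially that reference's argument, so there is nothing to flag beyond the cosmetic point that renaming $C\delta$ as $\delta$ at the end should be stated once for both inequalities.
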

	
	\begin{remark}
		The weight $k^{q}$ appeared in the above lemma is essential. Suppose one uses a summation $\sum_{k=1}^{K}\left\langle\partial^{\alpha}(u w)_{k}, y_{k}\right\rangle$, then one ends up with the estimate
		$$
		\begin{aligned}
			\left|\sum_{k=1}^{K}\left\langle\partial^{\alpha}(u w)_{k}, y_{k}\right\rangle\right| &=\left|\sum_{j, l, k=1}^{K} S_{j l k}\left\langle\partial^{\alpha}\left(u_{j} w_{l}\right), y_{k}\right\rangle\right| \\
			& \leq \dfrac{C}{\delta} C_{1}(K) \sum_{j=1}^{K}\left\|u_{j}\right\|_{s}^{2} \sum_{l=1}^{K}\left\|w_{l}\right\|_{s}^{2}+\delta C_{2}(K) \sum_{k=1}^{K}\left\|y_{k}\right\|_{0}^{2},
		\end{aligned}
		$$
		where $C_{1}(K)=\sum_{k=1}^{K} k^{p}=O\left(K^{p+1}\right), \quad C_{2}(K)=K \sum_{j=1}^{K} j^{p}=O\left(K^{p+2}\right) .$ Thus in this way one gets an estimate with the coefficient depending on $K$. If this estimate is used to prove an analog of Theorem \ref{thm:gPCcoefficients}, then a $K$-dependent constant $c_{2}$ will be obtained.
		
		Given Proposition \ref{prop_gPC1}, $c_{2}$ being independent of $K$ means that the conclusion of Theorem \ref{thm:gPCcoefficients} holds if the initial data satisfies a $K$-independnet smoothness condition.
		If $c_{2}$ depends on $K$, then the initial data needs to satisfy a $K$-dependent condition to achieve the conclusion of Theorem \ref{thm:gPCcoefficients}. 
		This is not good because the gPC-sG method is expected to be stable for a class of initial data for all $K$.
	\end{remark}

	Due to the similarity of Lemma \ref{Lem1.3} and Lemma \ref{Lem3.1}, it is straightforward to modify the proof of Theorem \ref{thm:energyestimate} into a proof of Theorem \ref{thm:gPCcoefficients} :
	
	\begin{proof}
		The gPC coefficients of the mean fluid velocity satisfies
		\begin{equation}\label{pfgPC2_ubar}		
			\partial_{t} \bar{u}_{k}+\frac{\kappa}{\epsilon}\sum_{i=1}^{N}i^{1/3} \bar{u}_{k}+\dfrac{\kappa}{\epsilon}\frac{1}{\left|\mathbb{T}^{3}\right|}\sum_{i=1}^{N}i^{1/3} \int_{{\mathbb{T}}^{3}}\int_{{\mathbb{R}}^{3}} \sqrt{\mu_i}(u f_i)_{k} \mathrm{~d} v \mathrm{~d} x=\frac{\kappa}{\epsilon} \frac{1}{\left|\mathbb{T}^{3}\right|} \sum_{i=1}^{N} \int_{{\mathbb{T}}^{3}}\int_{{\mathbb{R}}^{3}}  i^{1/3} v \sqrt{\mu_i} f_{ik} \mathrm{d} v \mathrm{d} x ,
		\end{equation}
		and
		\begin{equation}\label{pfgPC2_ubar2}	-\frac{\kappa}{\epsilon} \frac{1}{\left|\mathbb{T}^{3}\right|} \sum_{i=1}^{N} \int_{{\mathbb{T}}^{3}}\int_{{\mathbb{R}}^{3}}  i v \sqrt{\mu_i} f_{ik} \mathrm{d} v \mathrm{d} x = \bar{u}_k.
		\end{equation}
		
		Take $\partial^{\alpha}$ on the first and second equations of \eqref{eq:ufk}, do $L^{2}$ estimates on them as well as on \eqref{pfgPC2_ubar} and \eqref{pfgPC2_ubar2}, and sum over $i$, $k$ and $\alpha$ with the $k$ th equation multiplied by $k^{2 q}$. Then one gets
		$$
		\frac{1}{2} \partial_{t} E^{K}+G^{K}+B^{K}\leq 0,
		$$
		where
		$$
		\begin{aligned}
			E^{K}=&\sum_{k=1}^{K}\left(\left\|k^{q} u_{k}\right\|_{s}^{2}+\kappa\bar{\theta}\sum_{i=1}^{N}\left\|k^{q} f_{ik}\right\|_{s}^{2}+\left|k^{q} \bar{u}_{k}\right|^{2}\right), \\
			G^{K}=&G_{1}^{K}+G_{2}^{K} = G_{1}^{K}+\dfrac{\kappa}{\epsilon}\sum_{i=1}^{N}i^{1/3}G_{2,i}^{K}, \\
			B^{K}=&B_{1}^{K}+B_{2}^{K}+B_{3}^{K}=\sum_{|\alpha| \leq s} B_{1, \alpha}^{K}+\sum_{|\alpha| \leq s}\left(\dfrac{\kappa}{\epsilon}\sum_{i=1}^{N}i^{1/3} B_{2,i, \alpha}^{K}\right)+\dfrac{\kappa}{\epsilon}\sum_{i=1}^{N}i^{1/3}B_{3,i}^{K},
		\end{aligned}
		$$	
		with
		$$
		\begin{aligned}
			G_1^K &= \sum_{k=1}^{K}\left(\left\|\nabla_{x} k^{q} u_{k}\right\|_{s}^{2}+\dfrac{\kappa}{\epsilon}\left(\sum_{i=1}^{N}i^{1/3}-1\right)\left|k^{q} \bar{u}_{k}\right|^{2}\right),\\
			G_{2,i}^K &= \sum_{k=1}^{K}\left\|k^{q}\left(u_{k} \sqrt{\mu_i}-\frac{\bar{\theta}}{i} \nabla_{v} f_{ik}- \frac{v}{2} f_{ik}\right)\right\|_{s}^{2},\\
			B_{1, \alpha}^{K} &=\sum_{k=1}^{K} k^{2 q}\left\langle\partial^{\alpha}\left(u \cdot \nabla_{x} u\right)_{k}, \partial^{\alpha} u_{k}\right\rangle, \\
			B_{2, i,\alpha}^{K} &=\sum_{k=1}^{K} k^{2 q}\left\langle\partial^{\alpha}(u f_i)_{k}, \partial^{\alpha}\left[u_{k} \sqrt{\mu_i}-\frac{\bar{\theta}}{i} \nabla_{v} f_{ik}-\frac{v}{2} f_{ik}\right]\right\rangle, \\
			B_{3,i}^{K} &=\frac{1}{\left|\mathbb{T}^{3}\right|} \sum_{k=1}^{K} k^{2 q}\left\langle(u f_i)_{k}, \bar{u}_{k} \sqrt{\mu_i}\right\rangle.
		\end{aligned}
		$$
		By applying Lemma \ref{Lem3.1}, one gets
		$$
		\begin{aligned}
			&\left|B_{1, \alpha}^{K}\right| \leq \dfrac{C}{\delta} \sum_{k=1}^{K}\left\|k^{q} u_{k}\right\|_{s+1}^{2} \sum_{k=1}^{K}\left\|k^{q} u_{k}\right\|_{s}^{2}+\delta \sum_{k=1}^{K}\left\|k^{q} u_{k}\right\|_{s+1}^{2} \leq \dfrac{C}{\delta} E^{K} G_{1}^{K}+\delta G_{1}^{K}, \\
			&\begin{aligned}
				\left|\dfrac{\kappa}{\epsilon}\sum_{i=1}^{N}i^{1/3}B_{2,i, \alpha}^{K}\right| &\leq \dfrac{C}{\delta} \sum_{k=1}^{K}\left(\sum_{i=1}^{N}\left\|k^{q} f_{ik}\right\|_{s}^{2}\right)\sum_{k=1}^{K}\left(\dfrac{\kappa}{\epsilon}\sum_{i=1}^{N}i^{1/3}\left\|k^{q} u_{k}\right\|_{s}^{2}\right) +\delta \dfrac{\kappa}{\epsilon}\sum_{i=1}^{N}i^{1/3} G_{2,i}^{K} \\
				&\leq \dfrac{C}{\delta} E^{K} G_{1}^{K}+\delta G_{2}^{K},
			\end{aligned}	 \\
			&\begin{aligned}
				\left|\dfrac{\kappa}{\epsilon}\sum_{i=1}^{N}i^{1/3} B_{3,i}^{K}\right| &\leq \dfrac{C}{\delta}\sum_{k=1}^{K}\sum_{i=1}^{N}\left\|k^{q} f_{ik}\right\|_{s}^{2} \sum_{k=1}^{K}\left(\dfrac{\kappa}{\epsilon}\sum_{i=1}^{N}i^{1/3}\left\|k^{q} u_{k}\right\|_{s}^{2}\right) + \delta\sum_{k=1}^{K}\left(\dfrac{\kappa}{\epsilon}\sum_{i=1}^{N}i^{1/3}\left|k^{q} \bar{u}_{k}\right|^{2}\right) \\
				&\leq \dfrac{C}{\delta} E^{K} G_{1}^{K}+\delta G_{1}^{K}.
			\end{aligned}
		\end{aligned}
		$$
		And then one concludes
		\begin{equation}\label{ineq:EG}
			\frac{1}{2} \partial_{t} E^{K} \leq-\left(1-\dfrac{C}{\delta} E^{K}-C \delta\right) G^{K}.
		\end{equation}
		Assume $\delta=\frac{1}{4 C}$ where $C$ is the constant in \eqref{ineq:EG}, and $c_{2}(s, r)=\frac{1}{16 C^2}$. Then by the same argument as in the proof of Theorem \ref{thm:energyestimate}, if $E^{K}(0) \leq c_{2}(s, r)$, one has
		$$
		\partial_{t} E^{K}+G^{K} \leq 0,
		$$
		and $E^{K}$ is non-increasing.
		\qed
	\end{proof}
	
	\subsection{Accuracy analysis (Proof of Theorem \ref{thm:gPCaccuracy})}
	
	We first state the following lemma, which is an estimate on $u^K$ from \cite{ShuJin2018}:
	
	\begin{lemma}
		Recall that the definition of the reconstructed gPC solution is $$
		u^{K}(x, z)=\sum_{k=1}^{K} u_{k}(x) \phi_{k}(z).
		$$
		Then at a fixed $x$ point one has
		$$
		\left\|u^{K}\right\|_{0}^{2} \leq E_{0, q}^{K},
		$$
		for any $q \geq 0$.
		Furthermore, with the assumption \eqref{ineq:thmgPC1},
		one has 
		$$
		\left\|u^{K}(x)\right\|_{L_{z}^{\infty}}^{2} \leq C\left(\sum_{k=1}^{K}\left|u_{k}(x)\right| k^{p}\right)^{2} \leq C\left(\sum_{k=1}^{K}\left|k^{q} u_{k}(x)\right|^{2}\right)\left(\sum_{k=1}^{K} k^{2(p-q)}\right) \leq C\left(\sum_{k=1}^{K}\left|k^{q} u_{k}(x)\right|^{2}\right),
		$$
		since $q>p+2$. Thus
		\begin{equation}\label{ineq:uK}
			\left\|u^{K}\right\|_{L_{z}^{\infty}\left(L_{x}^{2}\right)}^{2} \leq\left\|u^{K}\right\|_{L_{x}^{2}\left(L_{z}^{\infty}\right)}^{2} \leq C E_{0, q}^{K} .
		\end{equation}
		Similar estimates hold for $f_i$ and $\bar{u}$ and their $x$ derivatives.
	\end{lemma}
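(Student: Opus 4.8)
The plan is to establish the three displayed bounds one at a time, the guiding principle throughout being that every constant must be traced to stay independent of $K$. The only two analytic ingredients needed are the orthonormality of the gPC basis $\{\phi_k\}$ with respect to $\pi(z)\,\mathrm{d}z$ and the polynomial growth bound \eqref{ineq:thmgPC1}. For the first estimate I would invoke Parseval's identity: since $\{\phi_k\}$ is orthonormal, at each fixed $x$ one has $\int_{\mathbb{Z}} |u^K(x,z)|^2 \pi(z)\,\mathrm{d}z = \sum_{k=1}^K |u_k(x)|^2$, and integrating in $x$ gives $\|u^K\|_0^2 = \sum_{k=1}^K \|u_k\|_0^2$, where I read $\|u^K\|_0$ as the full $L^2_{x,z}$ norm. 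Because $k \geq 1$ and $q \geq 0$ force $k^{2q} \geq 1$, this is bounded by $\sum_{k=1}^K \|k^q u_k\|_0^2 \leq E_{0,q}^K$, which is the claim for every $q \geq 0$.

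For the $L^\infty_z$ bound I would fix $x$ and estimate pointwise in $z$ by the triangle inequality followed by \eqref{ineq:thmgPC1}, giving $|u^K(x,z)| \leq \sum_{k=1}^K |u_k(x)|\,\|\phi_k\|_{L^\infty} \leq C \sum_{k=1}^K |u_k(x)|\,k^p$. The decisive move is to split the weight as $k^p = k^q \cdot k^{p-q}$ and apply Cauchy--Schwarz, which produces $\big(\sum_{k} |u_k(x)|\,k^p\big)^2 \leq \big(\sum_{k} |k^q u_k(x)|^2\big)\big(\sum_{k} k^{2(p-q)}\big)$. Since $q > p + 2$ (indeed $q > p + \tfrac12$ would already do), the tail series $\sum_{k=1}^\infty k^{2(p-q)}$ converges to a finite constant that does not depend on $K$, so I obtain $\|u^K(x)\|_{L^\infty_z}^2 \leq C \sum_{k=1}^K |k^q u_k(x)|^2$ with $C$ independent of $K$.

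Finally I would integrate this pointwise-in-$x$ estimate over $\mathbb{T}^3$ to reach $\|u^K\|_{L^2_x(L^\infty_z)}^2 \leq C \sum_{k=1}^K \|k^q u_k\|_0^2 \leq C\,E_{0,q}^K$, and establish the ordering $\|u^K\|_{L^\infty_z(L^2_x)} \leq \|u^K\|_{L^2_x(L^\infty_z)}$ as an instance of Minkowski's integral inequality: for each fixed $z$ one has $\|u^K(\cdot,z)\|_0^2 \leq \int_{\mathbb{T}^3} \big(\sup_{z'}|u^K(x,z')|\big)^2\,\mathrm{d}x$, whence taking the supremum over $z$ on the left gives the inequality. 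The analogous statements for $f_i$, for $\bar{u}$, and for the $x$-derivatives follow verbatim after replacing $u_k$ by $f_{ik}$, $\bar{u}_k$, or $\partial^\alpha u_k$ throughout. I do not expect a genuine obstacle here, as the lemma is essentially a bookkeeping exercise; the one point that deserves emphasis is that the $K$-independence of the final constant rests entirely on the convergence of $\sum_{k} k^{2(p-q)}$, which is exactly why the weighted energy $E_{0,q}^K$ is used in place of the naive $\sum_k \|u_k\|_s^2$ --- without the weight $k^q$ one would incur a $K$-dependent factor, precisely the pitfall recorded in the remark following Lemma \ref{Lem3.1}.
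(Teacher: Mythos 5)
Your proposal is correct and follows essentially the same route as the paper: Parseval via orthonormality of $\{\phi_k\}$ for the first bound, then the triangle inequality combined with $\|\phi_k\|_{L^\infty}\leq Ck^p$ and a Cauchy--Schwarz split $k^p=k^q\cdot k^{p-q}$ whose tail $\sum_k k^{2(p-q)}$ converges $K$-independently, and finally integration in $x$ together with the interchange $\|\cdot\|_{L_z^\infty(L_x^2)}\leq\|\cdot\|_{L_x^2(L_z^\infty)}$. Your side remark that $q>p+\tfrac12$ already suffices for the convergence of the tail series is accurate; the paper's hypothesis $q>p+2$ is simply what is carried over from Lemma \ref{Lem3.1}.
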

	
	Now give the proof of Theorem \ref{thm:gPCaccuracy}.
	
	\begin{proof}
		Denote the projection operator onto the span of $\left\{\phi_{k}\right\}_{k=1}^{K}$ by $P_{K}$. Multiplying \eqref{eq:ufk} and \eqref{pfgPC2_ubar}-\eqref{pfgPC2_ubar2} by $\phi_{k}(z)$ and summing in $k$, one gets the equations for $\left(u^{K}, f_i^{K}\right)$
		$$
		\begin{aligned}
			&\begin{aligned}\partial_{t} f_i^{K}+ v \cdot \nabla_{x} f_i^{K}+\frac{i^{1/3}}{\bar{\theta}\epsilon}\left(\frac{\bar{\theta}}{i}\nabla_{v}-\frac{v}{2}\right) \cdot P_{K}\left(u^{K} f_i^{K}\right)-\frac{i^{1/3}}{\bar{\theta}\epsilon} &u^{K} \cdot  v \sqrt{\mu_i}\\
				&=\frac{i^{1/3}}{\bar{\theta}\epsilon}\left(\frac{-|v|^{2}}{4}+\frac{3\bar{\theta}}{2i}+\frac{\bar{\theta}^2}{i^2}\Delta_{v}\right) f_i^{K},\end{aligned} \\
			&\begin{aligned}
				\partial_{t} u^{K}+P_{K}\left(u^{K} \cdot \nabla_{x} u^{K}\right)+\nabla_{x} p^{K}-\Delta_{x} u^{K}+\frac{\kappa}{\epsilon}\sum_{i=1}^{N}i^{1/3}u^{K}+\frac{\kappa}{\epsilon}\sum_{i=1}^{N}i^{1/3}\int \sqrt{\mu_i} P_{K}\left(u^{K} f_i^{K}\right) \mathrm{d} v\\
				-\frac{\kappa}{\epsilon}\sum_{i=1}^{N}i^{1/3} \int v \sqrt{\mu_i} f_i^{K} \mathrm{~d} v=0,
			\end{aligned} \\
			&\nabla_{x} \cdot u^{K}=0, \\
			&\partial_{t} \bar{u}^{K}+\frac{\kappa}{\epsilon}\sum_{i=1}^{N}i^{1/3} \bar{u}^{K}+\dfrac{\kappa}{\epsilon}\frac{1}{\left|\mathbb{T}^{3}\right|}\sum_{i=1}^{N}i^{1/3} \int_{{\mathbb{T}}^{3}}\int_{{\mathbb{R}}^{3}} \sqrt{\mu_i} P_{K}\left(u^{K} f_i^{K}\right) \mathrm{d} v \mathrm{~d} x\\
			&\hspace{25em}=\frac{\kappa}{\epsilon} \frac{1}{\left|\mathbb{T}^{3}\right|} \sum_{i=1}^{N} \int_{{\mathbb{T}}^{3}}\int_{{\mathbb{R}}^{3}}  i^{1/3} v \sqrt{\mu_i} f_{i}^K \mathrm{d} v \mathrm{d} x ,\\
			&-\frac{1}{|\mathbb{T}|^{3}}\sum_{i=1}^{N}\int_{{\mathbb{T}}^{3}}\int_{{\mathbb{R}}^{3}}i v \sqrt{\mu_i} f_i^K \mathrm{d} v \mathrm{d} x=\bar{u}^K.
		\end{aligned}
		$$
		Subtracting from \eqref{eq:uf} and \eqref{eq:ubar}-\eqref{eq:ubar2}, one gets
		\begin{equation}\label{eq:ufe}
			\begin{aligned}
				&\begin{aligned}
					\partial_{t} f_i^{e}+ v \cdot \nabla_{x} f_i^{e}+\frac{i^{1/3}}{\bar{\theta}\epsilon}\left(\frac{\bar{\theta}}{i}\nabla_{v}-\frac{v}{2}\right) \cdot\left[\left(I-P_{K}\right)(u f_i)+P_{K}\left(u^{e} f_i+u^{K} f_i^{e}\right)\right]-\frac{i^{1/3}}{\bar{\theta}\epsilon} u^{e} \cdot v \sqrt{\mu_i} \\
					=\frac{i^{1/3}}{\bar{\theta}\epsilon}\left(\frac{-|v|^{2}}{4}+\frac{3\bar{\theta}}{2i}+\frac{\bar{\theta}^2}{i^2}\Delta_{v}\right) f_i^{e}, 
				\end{aligned}\\
				&\partial_{t} u^{e}+\left[\left(I-P_{K}\right)\left(u \cdot \nabla_{x} u\right)+P_{K}\left(u^{e} \cdot \nabla_{x} u+u^{K} \cdot \nabla_{x} u^{e}\right)\right]+\nabla_{x} p^{e}-\Delta_{x} u^{e}+\dfrac{\kappa}{\epsilon}\sum_{i=1}^{N}i^{1/3}u^{e} \\
				&\  +\dfrac{\kappa}{\epsilon}\sum_{i=1}^{N}i^{1/3}\int \sqrt{\mu_i}\left[\left(I-P_{K}\right)(u f_i)+P_{K}\left(u^{e} f_i+u^{K} f_i^{e}\right)\right] \mathrm{d} v-\dfrac{\kappa}{\epsilon}\sum_{i=1}^{N}i^{1/3} \int v \sqrt{\mu_i} f_i^{e} \mathrm{~d} v=0,\\
				&\nabla_{x} \cdot u^{e}=0, \\
				&\begin{aligned}
					\partial_{t} \bar{u}^{e}+\frac{\kappa}{\epsilon}\sum_{i=1}^{N}i^{1/3} \bar{u}^{e}+\dfrac{\kappa}{\epsilon}\frac{1}{|\mathbb{T}|^{3}}\sum_{i=1}^{N}i^{1/3} \int_{{\mathbb{T}}^{3}}\int_{{\mathbb{R}}^{3}}  \sqrt{\mu_i}\left[\left(I-P_{K}\right)(u f_i)+P_{K}\left(u^{e} f_i+u^{K} f_i^{e}\right)\right] \mathrm{d} v \mathrm{~d} x\\
					= \frac{\kappa}{\epsilon} \frac{1}{|\mathbb{T}|^{3}} \sum_{i=1}^{N} \int_{{\mathbb{T}}^{3}}\int_{{\mathbb{R}}^{3}}  i^{1/3} v \sqrt{\mu_i} f_{i}^e \mathrm{d} v \mathrm{d} x ,
				\end{aligned}\\
				&-\frac{1}{|\mathbb{T}|^{3}}\sum_{i=1}^{N}\int_{{\mathbb{T}}^{3}}\int_{{\mathbb{R}}^{3}}i v \sqrt{\mu_i} f_i^e \mathrm{d} v \mathrm{d} x=\bar{u}^e,
			\end{aligned}
		\end{equation}
		where $\left(u^{e}, \{f_i^{e}\}_{i=1}^N\right)$ is the approximation error
		$$
		u^{e}=u-u^{K}, \quad f_i^{e}=f_i-f_i^{K},\  i=1,2,\ldots,N .
		$$
		Note that \eqref{eq:ufe} is linear in $\left(u^{e}, \{f_i^{e}\}_{i=1}^N\right)$.
		
		Take $\partial^{\alpha}$ on the first and second equations of \eqref{eq:ufe}, and do $L^{2}$ estimates on the first, second, fourth and fifth equations in $(x, z)$, $(x, v, z)$, $z$ and  $z$, respectively.
		Note that $P_{K}$ commutes with $x$-derivatives and has operator norm 1 on $L_{z}^{2}$. Thus one has
		$$
		\left|\left\langle\left\langle\partial^{\alpha} P_{K}\left(u^{e} \cdot \nabla_{x} u+u^{K} \cdot \nabla_{x} u^{e}\right), \partial^{\alpha} u^{e}\right\rangle\right\rangle\right| \leq C\left(\|u\|_{W^{s+1, \infty}}+\left\|u^{K}\right\|_{W^{s, \infty}}\right)\left\|u^{e}\right\|_{s+1}^{2},
		$$
		where the $W$ norm is defined in \eqref{def:finfty} and the sub-index $r=0$ is omitted. By estimating the terms $P_{K}\left(u^{e} f_i+u^{K} f_i^{e}\right)$ in the same manner, one has
		$$
		\left|\left\langle\left\langle\partial^{\alpha} P_{K}\left(u^{e} f_i+u^{K} f_i^{e}\right), \partial^{\alpha} u^{e}\right\rangle\right\rangle\right| \leq C\left(\|f_i\|_{W^{s, \infty}}+\left\|u^{K}\right\|_{W^{s, \infty}}\right)\left(\left\|u^{e}\right\|_{s}^{2}+\left\|f_i^{e}\right\|_{s}^{2}\right).
		$$
		Adding the above together, one gets the energy estimate
		\begin{equation}\label{ineq:EGS}
			\frac{1}{2} \partial_{t} E^{e}+\frac{2}{3} G^{e} \leq C H G^{e}+C S,
		\end{equation}
		where
		$$
		\begin{aligned}
			&E^{e}=\left\|u^{e}\right\|_{s}^{2}+\kappa\bar{\theta}\sum_{i=1}^N \left\|f_i^{e}\right\|_{s}^{2}+\left\|\bar{u}^{e}\right\|^{2}, \\
			&G^{e}=\left\|\nabla_{x} u^{e}\right\|_{s}^{2}+\dfrac{\kappa}{\epsilon}\left(\sum_{i=1}^Ni^{1/3}-1\right)\left\|\bar{u}^{e}\right\|^{2}+\dfrac{\kappa}{\epsilon}\sum_{i=1}^Ni^{1/3}\left\|u^{e} \sqrt{\mu_i}-\frac{\bar{\theta}}{i} \nabla_{v} f_i^{e}- \frac{v}{2} f_i^{e}\right\|_{s}^{2}, \\
			&S=\left\|\left(I-P_{K}\right)\left(u \cdot \nabla_{x} u\right)\right\|_{s}^{2}+\sum_{i=1}^N\left\|\left(I-P_{K}\right)(u f_i)\right\|_{s}^{2}, \\
			&H=\|u\|_{W^{s+1, \infty}}+\left\|u^{K}\right\|_{W^{s, \infty}}+\sum_{i=1}^N \|f_i\|_{W^{s, \infty}}.
		\end{aligned}
		$$
		
		Notice that by Sobolev embedding,
		$$
		\|u\|_{W^{s+1, \infty}} \leq C\|u\|_{L_{z}^{\infty}\left(H_{x}^{s+3}\right)}, \quad\|f_i\|_{W^{s, \infty}} \leq C\|f_i\|_{L_{z}^{\infty}\left(H_{x}^{s+2}\left(L_{v}^{2}\right)\right)},
		$$
		and by \eqref{ineq:uK}
		$$
		\left\|u^{K}\right\|_{W^{s, \infty}}^{2} \leq C E_{s+2, q}^{K}.
		$$
		Thus $H$ can be controlled by
		$$
		H \leq C\left(\left\|E_{s+3,0}\right\|_{L_{z}^{\infty}}+E_{s+2, q}^{K}\right)^{1 / 2} .
		$$
		In view of Proposition \ref{prop_gPC1}, for $r>p+\frac{5}{2}$, one has
		$$
		H \leq C\left\|E_{s+3, r}\right\|_{L_{z}^{\infty}}^{1 / 2},
		$$
		which implies that
		\begin{equation}\label{ineq:CH}
			C H \leq \frac{1}{6},
		\end{equation}
		in \eqref{ineq:EGS} for all time if $\left\|E_{s+3, r}(0)\right\|_{L_{z}^{\infty}} \leq c_{1}^{\prime \prime}(s, r) \leq \min \left\{\frac{1}{4 C}, c_{1}(s, r), c_{2}(s, q)\right\}$, in view of Theorem \ref{thm:energyestimate} and Theorem \ref{thm:gPCcoefficients}.
		
		To estimate the source term $S$, notice that at each fixed $x, v$,
		$$
		\left\|\left(I-P_{K}\right) \partial^{\alpha}(u f_i)(x, v)\right\|_{L_{z}^{2}} \leq C \frac{\left\|\partial^{\alpha}(u f_i)(x, v)\right\|_{H_{z}^{r}}}{K^{r}} .
		$$
		Integrating in $x, v$ and summing over $\alpha$,
		$$
		\left\|\left(I-P_{K}\right)(u f_i)\right\|_{s} \leq C \frac{\|u f_i\|_{s, r}}{K^{r}} .
		$$
		
		Notice that at each $z$,
		$$
		|u f_i|_{s, r} \leq \max _{|\alpha| \leq s,|\gamma| \leq r}\left\|\partial^{\alpha} u^{\gamma}\right\|_{L_{x}^{\infty}}|f_i|_{s, r} \leq C|u|_{s+2, r}|f_i|_{s, r} .
		$$
		Thus
		$$
		\sum_{i=1}^N\|u f_i\|_{s, r} \leq\sum_{i=1}^N\left\||u f_i|_{s, r}\right\|_{L_{z}^{\infty}} \leq C\left\| |u|_{s+2, r}\right\|_{L_{z}^{\infty}}\sum_{i=1}^N\left\||f_i|_{s, r}\right\|_{L_{z}^{\infty}} \leq C\left\|E_{s+2, r}\right\|_{L_{z}^{\infty}}^{1 / 2}\left\|E_{s, r}\right\|_{L_{z}^{\infty}}^{1 / 2} .
		$$
		Then by Theorems \ref{thm:energyestimate} and \ref{thm:hypocoercivity} (suppress the dependence on $C^{h}$ ), taking $c_{1}^{\prime \prime} \leq c_{1}^{\prime}(s, r)$,
		$$
		E_{s+2, r}(t) \leq C, \quad E_{s, r}(t) \leq C e^{-\lambda t} .
		$$
		Thus one finally gets
		$$
		\sum_{i=1}^N\left\|\left(I-P_{K}\right)(u f_i)\right\|_{s} \leq \frac{C e^{-\frac{\lambda}{2} t}}{K^{r}}.
		$$
		The term $\left\|\left(I-P_{K}\right)\left(u \cdot \nabla_{x} u\right)\right\|_{s}$ can be estimated similarly, by using $\left|u \cdot \nabla_{x} u\right|_{s, r} \leq C|u|_{s+3, r}|u|_{s, r}$, and one gets
		\begin{equation}\label{ineq:SK}
			S \leq \frac{C e^{-\lambda t}}{K^{2 r}}.
		\end{equation}
		
		In conclusion, combining \eqref{ineq:EGS}, \eqref{ineq:CH} and \eqref{ineq:SK}, one has the estimate
		\begin{equation}\label{ineq:EeGeK}
			\partial_{t} E^{e}+G^{e} \leq \frac{C}{K^{2 r}} e^{-\lambda t}.
		\end{equation}
		Noticing that $\int_{0}^{\infty} e^{-2 \lambda t} \mathrm{~d} t$ converges, one concludes that $E^{e} \leq \frac{C}{K^{2 r}}$ uniformly in time and $\epsilon$.
		\qed
	\end{proof}
	
	\subsection{Hypocoercivity estimates for the error (Proof of Theorem \ref{thm:gPChypocoercivity})}
	
	Before the proof, first state the following lemma, which is Lemma 5.4 in \cite{ShuJin2018}.
	
	\begin{lemma}\label{Lem4}
		Let $\Phi=\Phi(t)$ satisfy
		$$
		\frac{\mathrm{d} \Phi}{\mathrm{d} t}+a_{1} \Phi \leq a_{2} e^{-a_{3} t}.
		$$
		Then
		$$
		\Phi(t) \leq e^{-a t}\left(\Phi(0)+a_{2} C(\delta)\right),
		$$
		with $a=\min \left\{a_{1}, a_{3}\right\}-\delta$, $\delta$ being any positive constant.
	\end{lemma}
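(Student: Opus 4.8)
The plan is to treat the hypothesis as a scalar linear differential inequality and solve it with an integrating factor, then control the resulting convolution integral by an elementary uniform bound; the parameter $\delta$ and the constant $C(\delta)$ will enter precisely to absorb a possible polynomial factor. Throughout I assume $\Phi$ is (at least) absolutely continuous so that the manipulations below are justified pointwise almost everywhere.

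First I would multiply the hypothesis $\frac{\mathrm{d}\Phi}{\mathrm{d}t}+a_{1}\Phi\le a_{2}e^{-a_{3}t}$ by the integrating factor $e^{a_{1}t}$, so that the left-hand side becomes an exact derivative:
$$
\frac{\mathrm{d}}{\mathrm{d}t}\left(e^{a_{1}t}\Phi\right)\le a_{2}e^{(a_{1}-a_{3})t}.
$$
Integrating from $0$ to $t$ and rearranging then gives the representation
$$
\Phi(t)\le e^{-a_{1}t}\Phi(0)+a_{2}\int_{0}^{t}e^{-a_{1}(t-s)}e^{-a_{3}s}\,\mathrm{d}s.
$$
The first term is already harmless: since $a=\min\{a_{1},a_{3}\}-\delta\le a_{1}$, one has $e^{-a_{1}t}\Phi(0)\le e^{-at}\Phi(0)$.

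Next I would bound the convolution integral. Writing $m=\min\{a_{1},a_{3}\}$ and using $a_{1},a_{3}\ge m$, both exponential factors are dominated by $e^{-m(t-s)}$ and $e^{-ms}$, so the integrand is at most $e^{-mt}$ and hence $\int_{0}^{t}e^{-a_{1}(t-s)}e^{-a_{3}s}\,\mathrm{d}s\le t\,e^{-mt}$. The only subtle point, and the reason $\delta$ is needed at all, is that in the resonant case $a_{1}=a_{3}$ this integral genuinely produces the polynomial factor $t$, which cannot be absorbed into a pure $e^{-mt}$ decay. I would resolve this by peeling off a sliver of the exponent, writing $t\,e^{-mt}=\left(t\,e^{-\delta t}\right)e^{-(m-\delta)t}$, and invoking the elementary maximization $\sup_{t\ge 0}t\,e^{-\delta t}=\tfrac{1}{\delta e}$, so that $t\,e^{-mt}\le C(\delta)e^{-at}$ with $C(\delta)=\tfrac{1}{\delta e}$.

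Combining the two bounds yields $\Phi(t)\le e^{-at}\bigl(\Phi(0)+a_{2}C(\delta)\bigr)$, which is exactly the claimed estimate. I do not expect a genuine obstacle here: the lemma is elementary once one recognizes that the role of $\delta$ is to trade a polynomial factor arising in the resonant (or near-resonant) case for an arbitrarily small loss $\delta$ in the exponential decay rate, yielding a clean single-exponential bound at the cost of a $\delta$-dependent constant. The same computation covers the non-resonant cases $a_{1}\neq a_{3}$ a fortiori, since there the convolution decays like $e^{-mt}$ without any polynomial correction.
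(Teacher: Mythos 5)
Your proof is correct, and note that the paper itself gives no argument for this statement at all: it imports it verbatim as Lemma 5.4 of the cited work \cite{ShuJin2018}, so your integrating-factor derivation of the Duhamel bound $\Phi(t)\le e^{-a_{1}t}\Phi(0)+a_{2}\int_{0}^{t}e^{-a_{1}(t-s)}e^{-a_{3}s}\,\mathrm{d}s$, followed by the absorption $t\,e^{-mt}\le \frac{1}{\delta e}\,e^{-(m-\delta)t}$, is precisely the standard proof that the citation tacitly relies on, including the correct identification of the resonant case $a_{1}=a_{3}$ as the reason the $\delta$-loss in the rate is unavoidable. The only implicit assumption worth flagging is $a_{2}\ge 0$ (needed when you multiply your bound on the convolution integral by $a_{2}$), which is harmless since in the paper's application $a_{2}$ is a nonnegative source constant.
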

	
	Now it is ready to prove Theorem \ref{thm:gPChypocoercivity}.
	
	\begin{proof}
		In order to get a hypocoercivity estimate for $\left(u^{e}, \{f_i^{e}\}_{i=1}^N\right)$, we write the equation of $\partial^{\alpha} f_i^{e}$ as
		\begin{equation}\label{eq:fe}	
			\begin{aligned}
				\partial_{t} \partial^{\alpha} f_i^e+&\dfrac{i}{\bar{\theta}}\mathcal{P}_i \partial^{\alpha} f_i^e+\frac{i^{1/3}}{\bar{\theta}\epsilon}\left(\mathcal{K}_i^{*} \cdot \mathcal{K}_i\right) \partial^{\alpha} f_i^e= \frac{i^{1/3}}{\bar{\theta}\epsilon} \partial^{\alpha} u^{e} \cdot v \sqrt{\mu_i} \\
				&+\frac{i^{1/3}}{\bar{\theta}\epsilon} \left[\left(I-P_{K}\right) \partial^{\alpha}\left(u \cdot \mathcal{K}_i^{*} f_i\right)+P_{K} \partial^{\alpha}\left(u^{e} \cdot \mathcal{K}_i^{*} f_i\right) +P_{K} \partial^{\alpha}\left(u^{K} \cdot \mathcal{K}_i^{*} f_i^{e}\right)\right].
			\end{aligned}
		\end{equation}
		Then do energy estimate in $(x, v, z)$. 
		
		The linear terms can be handled in the same way as Theorem \ref{thm:hypocoercivity}. The nonlinear terms can be estimated as follows:
		$$
		\begin{aligned}
			\left|\frac{i^{1/3}}{\epsilon}\left(\left(\left(I-P_{K}\right) \partial^{\alpha}\left(u \cdot \mathcal{K}_i^{*} f_i\right), \partial^{\alpha} f_i^{e}\right)\right)\right|
			=&\left| \dfrac{2}{\epsilon}\left\langle\left\langle\left(I-P_{K}\right) \partial^{\alpha}(u \mathcal{K}_i f_i), \mathcal{K}_i^{2} \partial^{\alpha} f_i^{e}\right\rangle\right\rangle  + \text { similar terms }\right|\\
			\leq & \frac{C}{K^{r}} \frac{1}{\epsilon^{2}}\|u\|_{L_{z}^{\infty}\left(H^{s}+3, r\right)}\left([[f_i, f_i]]_{s, r}+\left[\left[f_i^{e}, f_i^{e}\right]\right]_{s}\right),
		\end{aligned}
		$$
		\begin{equation}\label{ineq:nonlinearfe}
			\begin{aligned}
				\left|\frac{i^{1/3}}{\epsilon}\left(\left(P_{K} \partial^{\alpha}\left(u^{e} \cdot \mathcal{K}_i^{*} f_i\right), \partial^{\alpha} f_i^{e}\right)\right)\right|  \leq&\left|\frac{i^{1/3}}{\epsilon}\left(\left(\partial^{\alpha}\left(u^{e} \cdot \mathcal{K}_i^{*} f_i\right), \partial^{\alpha} f_i^{e}\right)\right)\right| \\
				\leq & C \frac{1}{\epsilon^{2}} \max _{|\beta| \leq s}\left\|\partial^{\beta} u^{e}\right\|_{L^{\infty}}\left(\dfrac{C}{\delta}[[f_i, f_i]]_{s}+\delta\left[\left[f_i^{e}, f_i^{e}\right]\right]_{s}\right),
			\end{aligned}
		\end{equation}
		and
		$$
		\begin{aligned}			\left|\frac{i^{1/3}}{\epsilon}\left(\left(P_{K} \partial^{\alpha}\left(u^{K} \cdot \mathcal{K}_i^{*} f_i^{e}\right), \partial^{\alpha} f_i^{e}\right)\right)\right| \leq&\left|\frac{i^{1/3}}{\epsilon}\left(\left(\partial^{\alpha}\left(u^{K} \cdot \mathcal{K}_i^{*} f_i^{e}\right), \partial^{\alpha} f_i^{e}\right)\right)\right| \leq C \frac{1}{\epsilon^{2}}\left\|u^{K}\right\|_{L_{z}^{\infty}\left(H^{s+3}\right)}\left[\left[f_i^{e}, f_i^{e}\right]\right]_{s}.
		\end{aligned}$$
		Now by assumption, $\left\|E_{s+3, r}(t)\right\|_{L_{z}^{\infty}}$ is small enough at $t=0$ (which implies that they are small enough for all time by Theorem \ref{thm:energyestimate}). Similar result holds for $E_{s+3, q}^{K} \leq C\left\|E_{s+3, r}\right\|_{L_{z}^{\infty}}$ by Theorem \ref{thm:gPCcoefficients}. As a result, $\|u\|_{L_{z}^{\infty}\left(H^{s+3, r}\right)}$ and $\left\|u^{K}\right\|_{L_{z}^{\infty}\left(H^{s+3}\right)}$ are small enough, see \eqref{ineq:uK} for the latter.
		
		To bound the term $\max _{|\beta| \leq s}\left\|\partial^{\beta} u^{e}\right\|_{L^{\infty}}$ appeared in \eqref{ineq:nonlinearfe}, one estimates
		$$
		\left\|u^{e}\right\|_{L_{z}^{\infty}}=\left\|\sum_{k=1}^{K}\left(u^{e}\right)_{k} \phi_{k}(z)\right\|_{L_{z}^{\infty}} \leq C\left(\sum_{k=1}^{K}\left|\left(u^{e}\right)_{k}\right|^{2}\right)^{1 / 2}\left(\sum_{k=1}^{K} k^{2 p}\right)^{1 / 2} \leq C\left\|u^{e}\right\|_{L_{z}^{2}} K^{p+1 / 2},
		$$
		at any fixed $x$. By taking $L^{\infty}$ in $x$, one obtains $$ \left\|u^{e}\right\|_{L^{\infty}} \leq C K^{p+1 / 2}\left\|u^{e}\right\|_{L_{x}^{\infty}\left(L_{z}^{2}\right)} \leq C K^{p+1 / 2}\left\|u^{e}\right\|_{L_{z}^{2}\left(L_{x}^{\infty}\right)} \leq C K^{p+1 / 2}\left\|u^{e}\right\|_{L_{z}^{2}\left(H_{x}^{2}\right)}, $$
		and
		$$
		\max _{|\beta| \leq s}\left\|\partial^{\beta} u^{e}\right\|_{L^{\infty}} \leq \frac{C}{K^{r-p-1 / 2}} .
		$$
		
		Then by choosing $\delta$ in \eqref{ineq:nonlinearfe} small enough, all the $\left[\left[f_i^{e}, f_i^{e}\right]\right]_{s}$ terms from the nonlinear terms can be absorbed by the corresponding term from the linear terms, and then one concludes the estimate
		\begin{equation}\label{ineq:fe}
			\partial_{t}\left(\left(f_i^{e}, f_i^{e}\right)\right)_{s}+\frac{\lambda_{1}^{e}}{\bar{\theta}\epsilon^{2}}\left[\left[f_i^{e}, f_i^{e}\right]\right]_{s} \leq \frac{C\left(\lambda_{1}^{e}\right)}{\bar{\theta}}\left(\left\|u^{e}\right\|_{s}^{2}+\left\|\nabla_{x} u^{e}\right\|_{s}^{2}+\frac{1}{\epsilon^{2}}\left\|\mathcal{K}_i f_i^{e}\right\|_{s}^{2}\right)+\frac{C}{\bar{\theta}K^{r-p-1 / 2}} \frac{1}{\epsilon^{2}}[[f_i, f_i]]_{s, r}.		
		\end{equation}
		
		Finally, multiplying by $\kappa \bar{\theta}$ and summing over $i$, similar to the proof of Theorem \ref{thm:hypocoercivity}, by taking a suitable linear combination of \eqref{ineq:fe}, \eqref{ineq:EeGeK} and \eqref{ineq:Etilde0} integrated in $z$ (where the appearance of \eqref{ineq:Etilde0} is to control the term $[[f_i, f_i]]_{s, r}$ in \eqref{ineq:fe}), one gets
		$$	
		\partial_{t} \tilde{E}^{e}+ \tilde{G}^{e} \leq \lambda_{4}^{e}\tilde{B}^{e} + \lambda_{4}^{e}\kappa \frac{C}{K^{r-p-1 / 2}} \frac{1}{\epsilon^{2}}\sum_{i=1}^N[[f_i, f_i]]_{s, r}+\frac{C}{K^{r}} e^{-\lambda t}	,	
		$$
		where
		$$
		\tilde{E}^{e}=E^{e}+\lambda_{4}^{e}\kappa\bar{\theta}\sum_{i=1}^N\left(\left(f_i^{e}, f_i^{e}\right)\right)_{s}+\frac{1}{K^{r-p-1 / 2}} \lambda_{5}^{e}\|\tilde{E}\|_{L_{z}^{1}},
		$$
		$$
		\tilde{G}^{e}=G^{e}+\lambda_{4}^{e} \lambda_{1}^{e}\kappa\frac{1}{\epsilon^{2}}\sum_{i=1}^N \left[\left[f_i^{e}, f_i^{e}\right]\right]_{s}+\frac{1}{2 K^{r-p-1 / 2}} \lambda_{5}^{e}\|\tilde{G}\|_{L_{z}^{1}},
		$$
		and
		$$
		\tilde{B}^{e} = C\left(\lambda_{1}^{e}\right)\kappa\sum_{i=1}^N\left(\left\|u^{e}\right\|_{s}^{2}+\left\|\nabla_{x} u^{e}\right\|_{s}^{2}+\frac{1}{\epsilon^{2}}\left\|\mathcal{K}_i f_i^{e}\right\|_{s}^{2}\right).
		$$
		Choose $\lambda_{4}^{e}$ in the same way as the choice of $\lambda_{4}$ and one gets
		\begin{equation}\label{ineq:Ee}		
			\partial_{t} \tilde{E}^{e}+\frac{1}{2} \tilde{G}^{e} \leq \lambda_{4}^{e}\kappa \frac{C}{K^{r-p-1 / 2}} \frac{1}{\epsilon^{2}}\sum_{i=1}^N[[f_i, f_i]]_{s, r}+\frac{C}{K^{r}} e^{-\lambda t}	.	
		\end{equation}
		To choose $\lambda_{5}^{e}$, one wants the $\tilde{G}$ term to control the first RHS term in \eqref{ineq:Ee}, and thus choose
		$$
		\lambda_{5}^{e}= \frac{4 C \lambda_{4}^{e}}{\lambda_{4} \lambda_{1}},
		$$
		where the $C$ is the first constant in \eqref{ineq:Ee}. Then
		$$
		\partial_{t} \tilde{E}^{e}+\frac{1}{4} \tilde{G}^{e} \leq \frac{C}{K^{r}} e^{-\lambda t}.
		$$
		
		Then since $\tilde{E}^{e} \leq C \tilde{G}^{e}$ (which can be proved similarly as the proof of $\tilde{E} \leq C \tilde{G}$, see \eqref{ineq:EtildeG} ), and $\tilde{E}^{e}(0) \leq \frac{C}{K^{r-p-1 / 2}}$, one can conclude from Lemma \ref{Lem4} that
		$$
		\tilde{E}^{e} \leq \frac{C}{K^{r-p-1 / 2}} e^{-\lambda^{e} t},
		$$
		where $\lambda^{e}=\min \left\{\lambda, \frac{1}{4 C}\right\}-\delta$ for some $\delta>0$ small enough.
		\qed
	\end{proof}

	\section{Conclusion}
	For a kinetic-fluid model with random initial inputs which describes a mixture of dispersed particles of different sizes interacting with a fluid flow, in the fine particle regime, 
	under the assumption that  the random perturbation on initial fluid velocity as well as initial particle distribution is small in suitable Sobolev spaces with vanishing total mass and momentum, we proved uniform regularity of the solution  by energy estimates and the energy decays exponentially in time using hypocoercivity arguments. These results imply  that the long-term behavior of the solution is insensitive to random perturbation of the initial data.
	For random initial data near the global equilibrium, the generalized polynomial chaos expansion based stochastic Gelerkin  method is proved to have spectral accuracy, uniformly in time and the Knudsen number, and  captures the long-time behavior of the solution with an error that decays exponentially in time.

	\section*{Acknowledgement}
	S. Jin was partially supported by National Key R\&D Program of China (no. 21Z010300242) and National Natural Science Foundation of China (no. 20Z103020029). Y. Lin was partially supported by China Postdoctoral Science Foundation (no. 2021M702142 and no. 2021TQ0203). Y. Lin thanks Dr.  Ruiwen Shu for his  help during the preparation of the paper.

	\bibliography{mybibfile}

\begin{thebibliography}{37}
\expandafter\ifx\csname natexlab\endcsname\relax\def\natexlab#1{#1}\fi
\expandafter\ifx\csname url\endcsname\relax
  \def\url#1{\texttt{#1}}\fi
\expandafter\ifx\csname urlprefix\endcsname\relax\def\urlprefix{URL }\fi

\bibitem[{Arnold et~al.(2020)Arnold, Jin, and Wöhrer}]{ArnoldJin2020}
Arnold, A., Jin, S., Wöhrer, T., 2020. Sharp decay estimates in local
  sensitivity analysis for evolution equations with uncertainties: From {ODEs}
  to linear kinetic equations. Journal of Differential Equations 268~(3),
  1156--1204.

\bibitem[{Baranger et~al.(2005)Baranger, Boudin, Jabin, and
  Mancini}]{Baranger2005}
Baranger, C., Boudin, L., Jabin, P.-E., Mancini, S., 2005. A modeling of
  biospray for the upper airways, in cemracs 2004—mathematics and
  applications to biology and medicine. ESAIM: Proc. 14, EDP Sci., Les Ulis,
  France, 41--47.

\bibitem[{Benjelloun et~al.(2014)Benjelloun, Desvillettes, and
  Moussa}]{BenjellounDesvillettes2014}
Benjelloun, S., Desvillettes, L., Moussa, A., 2014. Existence theory for the
  kinetic-fluid coupling when small droplets are treated as part of the fluid.
  Journal of Hyperbolic Differential Equations 11~(1), 109--133.

\bibitem[{Boudin et~al.(2009)Boudin, Desvillettes, Grandmont, and
  Moussa}]{BoudinDesvillettes2009}
Boudin, L., Desvillettes, L., Grandmont, C., Moussa, A., 2009. Global existence
  of solutions for the coupled vlasov and navier-stokes equations. Differential
  and Integral Equations 22~(11-12), 1247--1271.

\bibitem[{Cao and Jiang(2021)}]{CaoJiang2021}
Cao, W., Jiang, P., 2021. Global bounded weak entropy solutions to the
  euler--vlasov equations in fluid-particle system. SIAM Journal on
  Mathematical Analysis 53~(4), 3958--3984.

\bibitem[{Carrillo et~al.(2011)Carrillo, Duan, and Moussa}]{CarrilloDuan2011}
Carrillo, J.~A., Duan, R., Moussa, A., 2011. Global classical solutions close
  to equilibrium to the vlasov-fokker-planck-euler system. Kinetic \& Related
  Models 4~(1), 227--258.

\bibitem[{Carrillo et~al.(2008)Carrillo, Goudon, and Lafitte}]{Carrillo2008}
Carrillo, J.-A., Goudon, T., Lafitte, P., 2008. Simulation of fluid and
  particles flows: Asymptotic preserving schemes for bubbling and flowing
  regimes. Journal of Computational Physics 227~(16), 7929--7951.

\bibitem[{Chae et~al.(2011)Chae, Kang, and Lee}]{ChaeKang2011}
Chae, M., Kang, K., Lee, J., 2011. Global existence of weak and classical
  solutions for the navier–stokes–vlasov–fokker–planck equations.
  Journal of Differential Equations 251~(9), 2431--2465.

\bibitem[{Cober and Isaac(2006)}]{Cober2006}
Cober, S., Isaac, G., 2006. Estimating Maximum Aircraft Icing Environments
  Using a Large Database of In-Situ Observations. 44th AIAA Aerospace Sciences
  Meeting and Exhibit. AIAA 2006-266.

\bibitem[{Friedlander(1977)}]{Friedlander1977}
Friedlander, S.~K., 1977. Smoke, Dust and Haze: Fundamentals of Aerosol
  Behavior. Wiley-Interscience, New York.

\bibitem[{Goudon et~al.(2010)Goudon, He, Moussa, and Zhang}]{GoudonHe2010}
Goudon, T., He, L., Moussa, A., Zhang, P., 2010. The
  navier–stokes–vlasov–fokker–planck system near equilibrium. SIAM
  Journal on Mathematical Analysis 42~(5), 2177--2202.

\bibitem[{Goudon et~al.(2004{\natexlab{a}})Goudon, Jabin, and
  Vasseur}]{GoudonJabin2004a}
Goudon, T., Jabin, P.-E., Vasseur, A., 2004{\natexlab{a}}. Hydrodynamic limit
  for the vlasov- navier-stokes equations. i. light particles regime. Indiana
  University Mathematics Journal 53~(6), 1517--1536.

\bibitem[{Goudon et~al.(2004{\natexlab{b}})Goudon, Jabin, and
  Vasseur}]{GoudonJabin2004b}
Goudon, T., Jabin, P.-E., Vasseur, A., 2004{\natexlab{b}}. Hydrodynamic limit
  for the vlasov-navier-stokes equations. ii. fine particles regime. Indiana
  University Mathematics Journal 53~(6), 1495--1515.

\bibitem[{Goudon et~al.(2012)Goudon, Jin, and Yan}]{Goudon2012}
Goudon, T., Jin, S., Yan, B., 2012. Simulation of fluid-particles flows: heavy
  particles, flowing regime and asymptotic-preserving schemes. Communications
  in Mathematical Sciences 10, 355--385.

\bibitem[{Goudon et~al.(2013)Goudon, Sy, and Eon}]{Goudon2013}
Goudon, T., Sy, M., Eon, L., 2013. {A fluid-kinetic model for particulate flows
  with coagulation and breakup: stationary solutions, stability, and
  hydrodynamic regimes}. SIAM Journal on Applied Mathematics 73~(1), 401--421.

\bibitem[{Gunzburger et~al.(2014)Gunzburger, Webster, and
  Zhang}]{Gunzburger2014}
Gunzburger, M., Webster, C., Zhang, G., 2014. Stochastic finite element methods
  for partial differential equations with random input data. Acta Numerica 23,
  521--650.

\bibitem[{Hamdache(1998)}]{Hamdache1998}
Hamdache, K., 1998. Global existence and large time behaviour of solutions for
  the vlasov-stokes equations. Japan Journal of Industrial and Applied
  Mathematics 51~(15), 51--74.

\bibitem[{Hauf and Schr\"oder(2006)}]{Hauf2006}
Hauf, T., Schr\"oder, F., 2006. Aircraft icing research flights in embedded
  convection. Meteorology and Atmospheric Physics 91~(1), 247--265.

\bibitem[{Hu and Jin(2017)}]{HuJin2017}
Hu, J., Jin, S., 2017. Uncertainty Quantification for Kinetic Equations. In:
  Jin S., Pareschi L. (eds) Uncertainty Quantification for Hyperbolic and
  Kinetic Equations. SEMA SIMAI Springer Series. Vol.~14. Springer, Cham.

\bibitem[{Jin(1999)}]{Jin1999}
Jin, S., 1999. {Efcient asymptotic-preserving (AP) schemes for some multiscale
  kinetic equations}. SIAM Journal on Scientific Computing 21, 441--454.

\bibitem[{Jin(2018)}]{JinICM}
Jin, S., 2018. International Congress of Mathematicians (ICM), Joint Sections
  of Numerical Analysis and Scientific Computing and Mathematics in Science and
  Technology. Rio de Janeiro, Brazil.

\bibitem[{Jin et~al.(2017)Jin, Liu, and Ma}]{JinLiuMa2017}
Jin, S., Liu, J.-G., Ma, Z., 2017. Uniform spectral convergence of the
  stochastic galerkin method for the linear transport equations with random
  inputs in diffusive regime and a micro–macro decomposition-based
  asymptotic-preserving method. Research in the Mathematical Sciences 15~(4),
  1--25.

\bibitem[{Jin and Liu(2017)}]{JinLiu2017}
Jin, S., Liu, L., 2017. An asymptotic-preserving stochastic galerkin method for
  the semiconductor boltzmann equation with random inputs and diffusive
  scalings. Multiscale Modeling \& Simulation 15~(1), 157--183.

\bibitem[{Jin et~al.(2015)Jin, Xiu, and Zhu}]{JinXiuZhu2015}
Jin, S., Xiu, D., Zhu, X., 2015. Asymptotic-preserving methods for hyperbolic
  and transport equations with random inputs and diffusive scalings. Journal of
  Computational Physics 289, 35--52.

\bibitem[{Jin and Zhu(2018)}]{JinZhu2018}
Jin, S., Zhu, Y., 2018. Hypocoercivity and uniform regularity for the
  vlasov--poisson--fokker--planck system with uncertainty and multiple scales.
  SIAM Journal on Mathematical Analysis 50~(2), 1790--1816.

\bibitem[{Li and Wang(2017)}]{LiWang2017}
Li, Q., Wang, L., 2017. Uniform regularity for linear kinetic equations with
  random input based on hypocoercivity. SIAM-ASA Journal on Uncertainty
  Quantification 5~(1), 1193--1219.

\bibitem[{Liu(2018)}]{Liu2018}
Liu, L., 2018. Uniform spectral convergence of the stochastic galerkin method
  for the linear semiconductor boltzmann equation with random inputs and
  diffusive scaling. Kinetic \& Related Models 11~(5), 1139--1156.

\bibitem[{Liu and Jin(2018)}]{LiuJin2018}
Liu, L., Jin, S., 2018. Hypocoercivity based sensitivity analysis and spectral
  convergence of the stochastic galerkin approximation to collisional kinetic
  equations with multiple scales and random inputs. Multiscale Modeling \&
  Simulation 16~(3), 1085--1114.

\bibitem[{Potapczuk and Tsao(2019)}]{Potapczuk2019}
Potapczuk, M., Tsao, J., 2019. {The Influence of SLD Drop Size Distributions on
  Ice Accretion in the NASA Icing Research Tunnel}. SAE Technical Paper.

\bibitem[{Prosperetti and Tryggvason(2007)}]{Prosperetti2007}
Prosperetti, A., Tryggvason, G., 2007. Computational Methods for Multiphase
  Flows. Cambridge University Press, Cambridge, UK.

\bibitem[{Shu and Jin(2018)}]{ShuJin2018}
Shu, R., Jin, S., 2018. Uniform regularity in the random space and spectral
  accuracy of the stochastic galerkin method for a kinetic-fluid two-phase flow
  model with random initial inputs in the light particle regime. ESAIM: M2AN
  52~(5), 1651--1678.

\bibitem[{Szeg\"o(1939)}]{Szego1939}
Szeg\"o, G., 1939. Orthogonal Polynomials. American Mathematical Society.

\bibitem[{Williams(1985)}]{Williams1985}
Williams, F.~A., 1985. Combustion Theory, 2nd ed. Benjamin Cummings, Menlo
  Park, CA.

\bibitem[{Xiu(2009)}]{Xiu2009}
Xiu, D., 2009. Fast numerical methods for stochastic computations: a review.
  Communications in Computational Physics 5, 242--272.

\bibitem[{Xiu(2010)}]{Xiu2010}
Xiu, D., 2010. Numerical Methods for Stochastic Computation. Princeton
  University Press, Princeton, New Jersey.

\bibitem[{Xiu and Karniadakis(2002)}]{Xiu2002}
Xiu, D., Karniadakis, G., 2002. The wiener-askey polynomial chaos for
  stochastic differential equations. SIAM Journal on Scientific Computing 24,
  619--644.

\bibitem[{Yu(2013)}]{Yu2013}
Yu, C., 2013. Global weak solutions to the incompressible
  navier–stokes–vlasov equations. Journal de Mathématiques Pures et
  Appliquées 100~(2), 275--293.

\end{thebibliography}
	
\end{document}